\documentclass[a4paper,twoside,reqno]{amsart}
\usepackage{amsthm,amsmath,amssymb,xcolor,soul,xspace,bm}
\usepackage[bookmarks=true,bookmarksopen=true]{hyperref}
\usepackage[ulem=normalem,draft]{changes}

\usepackage[ruled,algo2e]{algorithm2e}
\usepackage{graphicx,subcaption}

\theoremstyle{plain}
\newtheorem{theorem}{Theorem}[section]
\newtheorem{lemma}[theorem]{Lemma}
\newtheorem{corollary}[theorem]{Corollary}

\theoremstyle{definition}
\newtheorem{definition}[theorem]{Definition}

\newtheorem{assumption}[theorem]{Assumption}

\theoremstyle{remark}
\newtheorem{remark}[theorem]{Remark}

\numberwithin{equation}{section}
 \numberwithin{figure}{section}

\title[Optimal Control of Reaction-Diffusion Systems]{Optimal Control of a Class of Reaction-Diffusion Systems with Pointwise State Constraints}

\thanks{The first and third authors were supported by MICIU/AEI/10.13039/501100011033/ under research project PID2023-147610NB-I00.}

\author[E. Casas]{Eduardo Casas}
\address{Departamento de Matem\'{a}tica Aplicada y Ciencias de la Computaci\'{o}n, E.T.S.I. Industriales y de Telecomunicaci\'on, Universidad de Cantabria, 39005 Santander, Spain.}
\email{eduardo.casas@unican.es}

\author[K. Chrysafinos]{Konstantinos Chrysafinos}
\address{Department of Mathematics, School of Applied Mathematics and Physical Sciences, National Technical University of Athens, Zografou Campus, Athens 15780, Greece and IACM, FORTH, 20013 Heraklion, Crete, Greece.}
\email{chrysafinos@math.ntua.gr}

\author[M. Mateos]{Mariano Mateos}
\address{Departamento de Matem\'{a}ticas, Campus de Gij\'on, Universidad de Oviedo, 33203, Gij\'on, Spain.}
 \email{mmateos@uniovi.es}

\keywords{reaction-diffusion systems, optimal control, pointwise state constraints, optimality conditions}

\subjclass[2020]{
35K57 
35A01 
35A02 
49J20 
49K20 
}

\ifpdf
\hypersetup{
  pdftitle={Optimal Control of Reaction-Diffusion Systems},
  pdfauthor={E.~Casas,  K. Chrysafinos, and M.~Mateos}
}
\fi

\newcommand{\dx}{\,\mathrm{d}x}
\newcommand{\dt}{\,\mathrm{d}t}
\newcommand{\ds}{\,\mathrm{d}s}
\newcommand{\dmu}{\,\mathrm{d}\mu}
\newcommand{\dbmu}{\,\mathrm{d}\bar\mu}
\newcommand{\dabmu}{\,\mathrm{d}|\bar\mu|}
\newcommand{\supp}{\operatorname{supp}}
\newcommand{\Pb}{\mbox{\rm (P)}\xspace}
\newcommand{\proj}{\operatorname{Proj}}

\newcommand{\Y}{\mathcal{Y}}
\newcommand{\F}{\mathcal{F}}
\newcommand{\mG}{\mathcal{G}}
\newcommand{\mJ}{\mathcal{J}}
\newcommand{\mL}{\mathcal{L}}

\newcommand{\Uad}{U_{\rm ad}}
\newcommand{\K}{K_{\rm ad}}
\newcommand{\Aad}{A_{\rm ad}}
\newcommand{\M}{\mathcal{M}}
\newcommand{\mZ}{\mathcal{Z}}

\newcommand{\umin}{u_{\min}}
\newcommand{\umax}{u_{\max}}
\newcommand{\ymin}[1]{y_{\min,#1}}
\newcommand{\ymax}[1]{y_{\max,#1}}

\newcommand{\dive}{\operatorname{div}}

\begin{document}

\begin{abstract}
We study optimal control problems governed by a system of semilinear reaction-diffusion equations. The assumptions on the governing system are formulated so that our results apply to several models, including the Brusselator, Schnakenberg, and Gray--Scott systems. We prove the existence and uniqueness of a solution to the governing system without imposing any non-negativity assumption on the data. This allows us to study unconstrained, control-constrained, and state-constrained optimal control problems. For all these problems, we derive first-order necessary and second-order sufficient optimality conditions. The control acts on the activator equation through the boundary. Numerical examples demonstrate that both the inhibitor and the activator can be efficiently controlled. \end{abstract}

\maketitle

\section{Introduction} 
\label{S1}
In this paper, we analyze the optimal control problem:
\begin{align*}
&\Pb\quad \min_{u \in \Uad \text{ and } y_u \in \K} J(u),\\
J(u):=\int_Q L(x,t,y_u&(x,t))\dx\dt + \int_\Omega l(x,y_u(x,T))\dx + \frac{\nu}{2}\int_{\Sigma_c} u(x,t)^2\dx\dt,
\end{align*}
where $y_u = (y_{1,u},y_{2,u})$ denotes the solution of the following reaction-diffusion system
\begin{align}
&\left\{
\begin{array}{l}
 \displaystyle \frac{\partial y_1}{\partial t} - d_1\Delta y_1 
 + V_{1}\cdot\nabla y_1 + \gamma y_1y_2^2 + a_1y_1 + a_2y_2 = f_1\text{ in }Q, \\
  d_1\partial_n y_1 + \lambda_1y_1 = g_1\text{ on }\Sigma,\qquad y_1(0) = y_{0,1}\text{ in }\Omega, 
\end{array}
\right.
\label{E1.1} \\
&\left\{
\begin{array}{l}
\displaystyle  \frac{\partial y_2}{\partial t} - d_2\Delta y_2 
+ V_{2}\cdot \nabla y_2 - \gamma y_1 y^2_2 + b_1 y_1 + b_2 y_2 = f_2\text{ in }Q, \\
  d_2\partial_n y_2 
  +\lambda_2 y_2 = g_2 + u\chi_{\Sigma_c}\text{ on }\Sigma,\qquad y_2(0) = y_{0,2}\text{ in } \Omega. 
\end{array}
\right.
\label{E1.2} 
\end{align}
Above, $\nu > 0$, $\Omega \subset \mathbb{R}^n$ is a bounded open set with a Lipschitz boundary $\Gamma$ with $n = 2$ or 3, $\Gamma_c$ is a Lebesgue measurable subset of $\Gamma$ with positive measure, $T \in (0,\infty)$, $Q = \Omega \times (0,T)$, $\Sigma = \Gamma \times (0,T)$, and $\Sigma_c = \Gamma_c \times (0,T)$. With $\chi_{\Sigma_c}$ we denote the characteristic function of $\Sigma_c$. Regarding the control and state constraints, we assume that
\begin{align*}
&\Uad = \{u \in L^\infty(\Sigma_c) : \umin \le u(x,t) \le \umax \text{ for a.a. } (x,t) \in \Sigma_c\},\\
&\K = \{y \in C(\bar Q)^2 : \ymin{i} \le y_i(x,t) \le \ymax{i} \ \forall (x,t) \in \bar Q\text{ and } i=1,2\},\\
&\Aad = \{u \in \Uad : y_u \in \K\},
\end{align*}
where $-\infty < \umin < \umax < +\infty$ and $-\infty \leq \ymin{i} < \ymax{i} \leq +\infty$, $i = 1, 2$. Along this paper we assume that $\Aad$ is not empty.

The class of systems considered in this paper is important because of its many applications. For instance, the Brusselator model \cite{MichelPierre2010,PrigogineLefever1968}, obtained by taking
$V_1=V_2=\mathbf{0}$, $\gamma=1$, $a_1=0$, $a_2<0$, $b_1=0$, and
$b_2=-a_2+1$; the Schnakenberg model \cite{Madzvamuse2009,Schnakenberg1979},
with $b_2>0$, $a_1=a_2=b_1=0$, and $V_1=V_2$; and the Gray--Scott model
\cite{GrayScott1985,Pearson1993}, obtained by setting $V_1=V_2=\mathbf{0}$,
$a_1>0$, $a_2=0$, $b_1=0$, and $b_2>0$, all fit within the formulation
\eqref{E1.1}--\eqref{E1.2}. To the best of our knowledge, existence and uniqueness
for these systems have been established under the assumption that the initial
conditions and forcing terms are non-negative, which is the usual situation in
applications. In this case, the existence and uniqueness of non-negative
solutions are proved; see \cite{MichelPierre2010} and the references therein.
In this paper, we extend the analysis to forcing terms and initial conditions
without sign restrictions and prove the existence and uniqueness of a solution
in the space $W(0,T)\cap L^\infty(Q)$. As far as we know, this is the first
general result for this class of reaction-diffusion systems. The analysis of the
system is carried out in Section \ref{S2}.

In Section \ref{S3}, we formulate an optimal control problem associated with the
system \eqref{E1.1}--\eqref{E1.2}. The control acts from the boundary on the
equation satisfied by the activator $y_2$, and we impose pointwise constraints
on the activator and inhibitor $y_1$. We prove the existence of a solution to
the control problem and derive the first-order optimality conditions satisfied
by a local solution. The Lagrange multipliers associated with the state
constraints are Borel measures that appear in the adjoint state equation. The
existence, uniqueness, and regularity of the solution to the adjoint state
equation are established.

In Section \ref{S4}, the second-order analysis of the control problem is
performed. We provide a second-order sufficient optimality condition for a
strict local minimizer. To this end, an extended cone of critical directions
is introduced. Although many papers deal with the first-order analysis of
pointwise state-constrained control problems with distributed or Neumann
controls, very few provide second-order sufficient optimality conditions, and
almost all of them deal with distributed controls
\cite{Casas-Mateos-Rosch2025,RMRT2008,Krumbiegel-Rehberg-2013}. As far as we are aware, there is only one paper dealing with Neumann controls for an
elliptic equation in spatial dimension $2$, namely \cite{CRT2008}. Our paper is
the first to consider pointwise state-constrained control problems with Neumann
boundary controls in spatial dimension $2$ or $3$ governed by a semilinear
parabolic equation. We take advantage of the special structure of the system to
carry out the second-order analysis. This problem remains open for general semilinear parabolic partial differential equations.

Finally, in Section \ref{S5}, we present numerical results showing that the
system can be efficiently controlled from the boundary by acting on the
activator equation.

\section{Analysis of the state equation \eqref{E1.1}--\eqref{E1.2}}
\label{S2}
\setcounter{equation}{0}

In this section, we establish the existence and uniqueness of a solution to the system \eqref{E1.1}--\eqref{E1.2} and prove the differentiability of the relation control-to-state for suitable spaces. To do this, we make the following assumption.

\begin{assumption}
For $i = 1, 2$, we assume that $d_i > 0$ and $\lambda_i \ge 0$, and $\gamma > 0$ are constants, $a_i, b_i \in L^\infty(Q)$, $V_i \in L^\infty(Q)^n$ with $\dive{V_i} \in L^\infty(Q)$ and $V_i(x,t) \cdot \mathbf{n}(x) = 0$ on $\Sigma$, where $\mathbf{n}(x)$ denotes the outward unit normal vector to $\Gamma$ at the point $x$. We also assume that $y_0 = (y_{0,1},y_{0,2}) \in C(\bar\Omega)^2$, $f_i \in L^r(0,T;L^p(\Omega))$, $u \in L^s(0,T;L^q(\Gamma_c))$, $g_i \in L^s(0,T;L^q(\Gamma))$, $i = 1, 2$, with $r, p, s , q \ge 2$, $\frac{1}{r} + \frac{n}{2p} < 1$, and $\frac{1}{s} + \frac{n - 1}{2q} < \frac{1}{2}$.
\label{A2.1}
\end{assumption}

The assumptions on $r, p, s, q$ are necessary to have a continuous solution of the system \eqref{E1.1}--\eqref{E1.2}. Hereafter, we denote $a = (a_1,a_2)$, $b = (b_1,b_2)$, $f=(f_1,f_2)$ and $g = (g_1,g_2)$. As usual we set
\[
W(0,T) = \{w \in L^2(0,T;H^1(\Omega)) : \frac{\partial w}{\partial t} \in L^2(0,T;H^1(\Omega)^*)\}.
\]
It is well known that this vector space endowed with the norm
\[
\|w\|_{W(0,T)} = \Big(\|w\|^2_{L^2(0,T;H^1(\Omega))} + \|w\|^2_{L^2(0,T;H^1(\Omega)^*)}\Big)^{\frac{1}{2}}
\]
is a Hilbert space. The embeddings $W(0,T) \subset C([0,T];L^2(\Omega))$ and $W(0,T) \subset L^2(Q)$ are continuous, and the second embedding is even compact. In the sequel we denote
\[
\|w\|_{Q_t} = \|w\|_{L^\infty(0,t;L^2(\Omega))} + \|w\|_{L^2(0,t;H^1(\Omega))} \quad \forall t \in (0,T].
\]
If $t = T$ we simply write $\|w\|_Q$. We note that the norm $\|\cdot\|_Q$ is equivalent to
\[
\|w\| = \|w\|_{L^\infty(0,T;L^2(\Omega))} + \|\nabla w\|_{L^2(Q)}.
\]
Given a normed space $X$, if $w = (w_1,w_2) \in X^2$, we denote $\|w\|_X = \|w_1\|_X + \|w_2\|_X$.

\begin{definition}
We say that $y = (y_1,y_2) \in [W(0,T) \cap L^\infty(Q)]^2$ is a solution of \eqref{E1.1}--\eqref{E1.2} if the identities
\begin{align*}
\langle\frac{\partial y_1}{\partial t},w_1\rangle &+ \int_{\Omega}\big(d_1\nabla y_1 \nabla w_1 + [V_1\cdot \nabla y_1 + \gamma y_1y^2_2 +a\cdot y]w_1 \big)\dx + \lambda_1\int_\Gamma y_1w_1\dx\\ &= \int_{\Omega} f_1w_1\dx + \int_\Gamma g_1 w_1\dx\ \text{ in } \mathcal{D}'(0,T),\\
\langle\frac{\partial y_2}{\partial t},w_2\rangle &+ \int_{\Omega}\big(d_2\nabla y_2 \nabla w_2 + [V_2\cdot \nabla y_2 -  \gamma y_1y^2_2 +b\cdot y]w_2 \big)\dx+ \lambda_2\int_\Gamma y_2w_2\dx \\
&= \int_{\Omega} f_2w_2\dx + \int_\Gamma(g_2 + u\chi_{\Gamma_c})w_2\dx\ \text{ in } \mathcal{D}'(0,T),\\
&\hspace{-1.25cm}y_1(0) = y_{0,1},\, y_2(0) = y_{0,2},
\end{align*}
hold for all $w_i \in H^1(\Omega)$, $i = 1, 2$. Above, $\langle\cdot,\cdot\rangle$ denotes the duality pair between $H^1(\Omega)^*$ and $H^1(\Omega)$.
\label{D2.1}
\end{definition}

\begin{theorem}
Under the assumption \ref{A2.1}, the system \eqref{E1.1}--\eqref{E1.2} has a unique solution $y_u = (y_{u,1},y_{u,2}) \in [W(0,T) \cap C(\bar Q)]^2$. Moreover, there exist a constant $K$ and a monotone non-decreasing function $\eta:[0,\infty) \longrightarrow [0,\infty)$ independent of $f, u, g$, and $y_0$ such that
\begin{align}
&\|y_u\|_Q \le K\Big(\|f\|_{L^2(Q)} + \|g + (0,u)\chi_{\Sigma_c}\|_{L^2(\Sigma)} + \|y_0\|_{L^2(\Omega)} + \sqrt{T|\Omega|}\Big),\label{E2.1}\\
&\|y_u\|_{W(0,T)} + \|y_u\|_{C(\bar Q)}\label{E2.2}\\
&\phantom{\|y_u\|_Q}\le \eta\Big(\|f\|_{L^r(0,T;L^p(\Omega))} + \|g + (0,u)\chi_{\Gamma_c}\|_{L^s(0,T;L^q(\Gamma))} + \|y_0\|_{C(\bar\Omega)}\Big).\notag
\end{align}
Moreover, if $u_k \rightharpoonup u$ in $L^s(0,T;L^q(\Sigma_c))$, then $y_{u_k} \to y_u$ in $C(\bar Q)^2$ holds.
\label{T2.1}
\end{theorem}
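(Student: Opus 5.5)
The key structural fact is that the cubic terms have opposite signs: $+\gamma y_1y_2^2$ in the first equation and $-\gamma y_1y_2^2$ in the second. Consequently the sum $z=y_1+y_2$ satisfies an equation whose reaction part is linear, up to the mismatch between the diffusion and transport coefficients. I will exploit this together with a truncation argument.

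\textbf{Step 1 (truncated system).} For $M>0$, let $\Phi_M(y_1,y_2)=\gamma\, T_M(y_1)T_M(y_2)^2$, where $T_M$ is the truncation at level $\pm M$, and replace $\gamma y_1y_2^2$ by $\Phi_M$ in both equations. The nonlinearity is now globally Lipschitz and bounded. A Schauder (or Banach, on short time intervals) fixed point argument, using linear parabolic theory with Robin boundary conditions, then gives a unique solution $y^M\in[W(0,T)\cap C(\bar Q)]^2$. The continuity of solutions comes from the classical estimates under the hypotheses $\frac1r+\frac n{2p}<1$ and $\frac1s+\frac{n-1}{2q}<\frac12$ (Casas, Raymond--Zidani). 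The convection terms cause no difficulty: since $V_i\cdot\mathbf n=0$ and $\dive V_i\in L^\infty$, we have $\int_\Omega (V_i\cdot\nabla w)w=-\frac12\int_\Omega \dive V_i\,w^2$.

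\textbf{Step 2 (estimates independent of $M$).} Two tests give the main bounds.
\begin{itemize}
\item Testing the first equation with $y_1$ produces the term $\gamma\int T_M(y_1)y_1T_M(y_2)^2\ge0$. Gronwall's inequality then bounds $\|y_1\|_Q$ by the data plus $\|y_2\|_{L^2}$.
\item For $y_2$, the cubic term has no sign. The plan is to obtain an $L^\infty$ bound on $y_1$ first, which is the main obstacle. Since $\Phi_M$ has the sign of $y_1$, the first equation reads $\partial_ty_1-d_1\Delta y_1+V_1\cdot\nabla y_1+(a_1+c)y_1=f_1-a_2y_2$ with $c=\gamma T_M(y_1)T_M(y_2)^2/y_1\ge0$. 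The comparison/Stampacchia $L^\infty$ estimate for such equations, which is uniform in the nonnegative coefficient $c$, gives $\|y_1\|_{L^\infty}\le C(\text{data}+\|y_2\|_{L^r(L^p)})$.
\item Next add the equations: $z=y_1+y_2$ solves a linear equation in which the cross terms $(d_1-d_2)\Delta y_1$ and $(V_1-V_2)\cdot\nabla y_1$ appear as a right-hand side. I would rather test the second equation with $y_2$ and control $\gamma\int y_1y_2^3$. Using $y_2^3=y_2(z-y_1)y_2$ does not close directly, so instead I will use a weighted combination: test the first equation with $y_1$, the second with $y_2$, and treat the coupling through the $L^\infty$ bound on $y_1$ obtained above.
\end{itemize}
This appears circular, so what I will actually try first is a De Giorgi iteration on the pair. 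The region where $y_2$ is large and $y_1$ positive is where $-\gamma y_1y_2^2$ drives $y_2$ downward, which helps an upper bound. The region where $y_1$ is negative and $|y_2|$ large is where that term drives $y_2$ upward. There $y_1$ itself is pushed toward zero at rate $\gamma y_2^2$, so $|y_1|\lesssim |f_1|+|a_2y_2|$ over $\gamma y_2^2$, and the product $\gamma|y_1|y_2^2$ is controlled linearly by $|y_2|$. Making this heuristic rigorous via the sum $z$ is the crux. The $L^2$-type estimate \eqref{E2.1} follows by testing the $z$-equation, together with the first equation, with suitable multiples of $z$ and $y_1$, so that the cubic terms cancel. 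The affine term $\sqrt{T|\Omega|}$ reflects a lower-order constant in that argument. From \eqref{E2.1} and the bound on $y_1$, the $L^\infty$ and $C(\bar Q)$ bound \eqref{E2.2}, with a nondecreasing function $\eta$, follows by treating the cubic term as a right-hand side in $L^\infty$. Choosing $M>\eta(\cdot)$ then removes the truncation.

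\textbf{Step 3 (uniqueness and continuity).} For uniqueness, take two bounded solutions and subtract them. The difference of the cubic terms is Lipschitz on bounded sets, so an energy estimate plus Gronwall gives equality. For the last claim, $u_k\rightharpoonup u$ implies the states are bounded in $C(\bar Q)$ by \eqref{E2.2}. Moreover, the linear map from boundary data to $C(\bar Q)$ is compact in the subcritical range (pick $\tilde s<s$, $\tilde q<q$ still satisfying the condition, use compactness of $L^s(L^q)\to$ Hölder-continuous solutions). Writing $y_{u_k}-y_u$ as the solution of a linear system with bounded coefficients and right-hand side $u_k-u$ on $\Sigma_c$, the compactness of that linear operator gives strong convergence in $C(\bar Q)^2$.
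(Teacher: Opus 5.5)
There is a genuine gap at the decisive step: the $M$-independent $L^\infty$ bound on $y_2$. The bound on $y_1$ is not the main obstacle. Once the energy estimate bounds $y_2$ in $L^\infty(0,T;L^2(\Omega))\cap L^2(0,T;H^1(\Omega))\subset L^{2(n+2)/n}(Q)$, your argument with the nonnegative potential $c$ gives $\|y_1\|_{L^\infty(Q)}$ independently of $M$, as in the paper.

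Your closing claim, that \eqref{E2.2} follows ``by treating the cubic term as a right-hand side in $L^\infty$'', is circular. The term $\gamma T_M(y_1)T_M(y_2)^2$ is bounded in $L^\infty(Q)$ uniformly in $M$ only if $y_2$ already is. What you actually have is $|\gamma T_M(y_1)T_M(y_2)^2|\le\gamma\|y_1\|_{L^\infty(Q)}\,y_2^2$, which is bounded only in $L^{(n+2)/n}(Q)$. The $L^\infty$ estimate for a source in $L^\rho(Q)$ requires $\rho>\frac{n+2}{2}$, which fails for $n=3$ and is borderline for $n=2$. The De Giorgi heuristic on the pair, which was meant to handle exactly this point, is never made rigorous.

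The missing idea is to treat the cubic term as a zero-order coefficient rather than a source. Write $-\gamma T_M(y_1)T_M(y_2)^2=a\,y_2$, with $a=-\gamma T_M(y_1)T_M(y_2)^2/y_2$ where $y_2\neq0$ and $a=0$ otherwise. Then $|a|\le\gamma\|y_1\|_{L^\infty(Q)}|y_2|$, so the energy estimate bounds $a$ in $L^3(Q)$ independently of $M$. Since $\frac{1}{3}+\frac{n}{6}<1$ for $n\le3$, \cite[Theorem III-7.1]{Lad-Sol-Ura68} applies; it allows zero-order coefficients of either sign in this class. It bounds $y_2$ in terms of $\|a\|_{L^3(Q)}$ and the data only. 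The paper applies it after subtracting from $y_2$ the solution of a linear problem carrying the data.

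The rest of your outline can be kept. In particular, your energy estimate closes as you finally describe it. Test the first equation with $y_1$, where the truncated cubic term is nonnegative. Test the equation for $y_1+y_2$, which has no cubic term, with a small multiple of $y_1+y_2$, and apply Gronwall. The circularity you feared disappears because the $y_1$-estimate only involves $\int_0^t\|y_2\|^2_{L^2(\Omega)}$.

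The paper takes a different route here. It truncates only $y_2$, including in the term $a_2y_2$ of the first equation. It then decouples $y_1$ completely via $|a_2y_1\proj_{[-M,+M]}(y_2)|\le\frac{\gamma}{2}y_1^2\proj_{[-M,+M]}(y_2)^2+\frac{1}{2\gamma}a_2^2$. This is the origin of the term $\sqrt{T|\Omega|}$ in \eqref{E2.1}.
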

\begin{proof}

{\em Existence of a solution.} We use the Schauder fixed-point theorem to prove the existence of a solution. Given $M > 0$ to be fixed later, let us define the function $F:L^2(Q) \longrightarrow L^2(Q)$ by $F(\phi) = y_{2,\phi}$, where $y_\phi = (y_{1,\phi},y_{2,\phi}) \in W(0,T)^2$ is the solution of the linear system
\begin{align}
&\left\{\begin{array}{l}\displaystyle \frac{\partial y_{1,\phi}}{\partial t} - d_1\Delta y_{1,\phi} + V_1\cdot\nabla y_{1,\phi} + \gamma y_{1,\phi}[\proj_{[-M,+M]}(\phi)]^2\\
\qquad \qquad + a_1y_{1,\phi} + a_2\proj_{[-M,+M]}(\phi) = f_1\ \text{ in } Q,\\d_1\partial_ny_{1,\phi} + \lambda_1y_{1,\phi} = g_1\ \text{ on } \Sigma,\ y_{1,\phi}(0) = y_{1,0} \ \text{ in } \Omega,\end{array}\right. \label{F2.3}\\
&\left\{\begin{array}{l}\displaystyle \frac{\partial y_{2,\phi}}{\partial t} - d_2\Delta y_{2,\phi} + V_2\cdot\nabla y_{2,\phi} - \gamma y_{1,\phi}[\proj_{[-M,+M]}(\phi)]^2\\
\qquad \qquad + b_1y_{1,\phi} + (b_2 + a_2)y_{2,\phi} - a_2\proj_{[-M,+M]}(\phi) = f_2\ \text{ in } Q,\\d_2\partial_ny_{2,\phi} + \lambda_2y_{2,\phi} = g_2\ \text{ on } \Sigma,\ y_{2,\phi}(0) = y_{2,0} \ \text{ in } \Omega.\end{array}\right. \label{F2.4}
\end{align}
Since the equations are decoupled, the existence and uniqueness of a solution of the above system are standard. In addition, there exists a constant $C_M$ depending on $M$ and the data of the equations such that $\|y_\phi\|_{W(0,T)^2} \le C_M$ for every $\phi \in L^2(Q)$.

The proof of existence of a solution of \eqref{E2.1}--\eqref{E2.2} is divided into four steps.\vspace{2mm}

{\em Step I - $\exists C_1 > 0$ independent of $M$ such that}
\begin{equation}
\|y_{1,\phi}\|_Q \le C_1(1 + \|f_1\|_{L^2(Q)} + \|g_1\|_{L^2(\Sigma)} + \|y_{1,0}\|_{L^2(\Omega)})\quad \forall \phi \in L^2(Q).
\label{F2.5}
\end{equation}
Setting $\alpha = \frac{1}{2}(d_1 + \frac{1}{d_1}\|V_1\|^2_{L^\infty(\Omega)^n} + 2\|a_1\|_{L^\infty(\Omega)})$, testing the equation \eqref{F2.3} with $\text{e}^{-2\alpha t}y_{1,\phi}$ and using that
\begin{align*}
&\gamma\int_\Omega y_{1,\phi}^2[\proj_{[-M,+M]}(\phi)]^2\dx + \int_\Omega a_2y_{1,\phi}\proj_{[-M,+M]}(\phi)\dx\\
&\qquad\qquad\ge \frac{\gamma}{2}\int_\Omega y_{1,\phi}^2[\proj_{[-M,+M]}(\phi)]^2\dx - \frac{1}{2\gamma}\|a_2\|^2_{L^\infty(\Omega)}|\Omega|,\\
& \int_{\Omega} V_1\cdot\nabla y_{1,\phi}y_{1,\phi}\dx \ge -\frac{d_1}{2}\|\nabla y_{1,\phi}\|^2_{L^2(\Omega)} - \frac{1}{2d_1}\|V_1\|^2_{L^\infty(\Omega)} \|y_{1,\phi}\|^2_{L^2(\Omega)},\\
&\exists C_\Gamma > 0 \text{ such that } \|w\|_{L^2(\Gamma)} \le C_\Gamma\|w\|_{H^1(\Omega)}\ \forall w \in H^1(\Omega),\\
&\langle\frac{\partial y_{1,\phi}}{\partial t},\text{e}^{-2\alpha t}y_{1,\phi}\rangle = \text{e}^{-2\alpha t}\langle\frac{\partial y_{1,\phi}}{\partial t},y_{1,\phi}\rangle = \text{e}^{-2\alpha t}\frac{1}{2}\frac{d}{dt}\|y_{1,\phi}(t)\|^2_{L^2(\Omega)}\\
&\qquad\qquad = \frac{1}{2}\frac{d}{dt}\Big(\text{e}^{-2\alpha t}\|y_{1,\phi}(t)\|^2_{L^2(\Omega)}\Big) + \alpha \text{e}^{-2\alpha t}\|y_{1,\phi}(t)\|^2_{L^2(\Omega)},
\end{align*}
we get
\begin{align*}
&\frac{1}{2}\frac{d}{dt}\Big(\text{e}^{-2\alpha t}\|y_{1,\phi}(t)\|^2_{L^2(\Omega)}\Big) + \text{e}^{-2\alpha t}\Big(\frac{d_1}{2}\|y_{1,\phi}(t)\|^2_{H^1(\Omega)} + \lambda_1\|y_{1,\phi}\|^2_{L^2(\Gamma)}\Big)\\
&\le \text{e}^{-2\alpha t}\Big(\frac{1}{2\gamma}\|a_2\|^2_{L^\infty(\Omega)}|\Omega| + \|f_1(t)\|_{L^2(\Omega)}\|y_{1,\phi}(t)\|_{L^2(\Omega)} + \|g_1(t)\|_{L^2(\Gamma)}\|y_{1,\phi}(t)\|_{L^2(\Gamma)}\Big).
\end{align*}
This implies
\begin{align*}
&\frac{d}{dt}\Big(\text{e}^{-2\alpha t}\|y_{1,\phi}(t)\|^2_{L^2(\Omega)}\Big) + \frac{d_1}{2}\text{e}^{-2\alpha t}\|y_{1,\phi}(t)\|^2_{H^1(\Omega)} \\
& \le \text{e}^{-2\alpha t}\Big(\frac{1}{\gamma}\|a_2\|^2_{L^\infty(\Omega)}|\Omega| + \frac{2}{d_1}\|f_1(t)\|^2_{L^2(\Omega)} + \frac{2}{d_1}C_\Gamma^2\|g_1(t)\|^2_{L^2(\Gamma)}\Big).
\end{align*}
Integrating from $0$ to $t$ we find
\begin{align*}
&\|y_{1,\phi}(t)\|^2_{L^2(\Omega)} + \frac{d_1}{2}\int_0^t\|y_{1,\phi}(s)\|_{H^1(\Omega)}^2\ds\\
& \le \text{e}^{2\alpha T}\Big(\frac{1}{\gamma}\|a_2\|^2_{L^\infty(\Omega)}|\Omega|T + \frac{2}{d_1}\|f_1\|^2_{L^2(Q)} + \frac{2}{d_1}C_\Gamma^2\|g_1\|^2_{L^2(\Sigma)} + \|y_{1,0}\|^2_{L^2(\Omega)}\Big).
\end{align*}
This proves \eqref{F2.5}, where the constant $C_1$ depends only on $d_1$, $\|V_1\|_{L^\infty(\Omega)^n}$, $\|a_i\|_{L^\infty(\Omega)}$, $i = 1, 2$, $\gamma$, $\Omega$, and $T$.\vspace{2mm}

{\em Step II - $\exists C_2 > 0$ independent of $M$ such that for every $\phi \in L^2(Q)$ the following inequality holds}
\begin{equation}
\|y_{2,\phi}\|_Q \le C_2\Big(1 + \|f\|_{L^2(Q)^2} + \|g\|_{L^2(\Sigma)^2} + \|y_0\|_{L^2(\Omega)^2}\Big). \label{F2.6}
\end{equation}

Setting $w_\phi = y_{1,\phi} + y_{2,\phi}$ and adding the corresponding equations we obtain
\begin{equation}
\left\{\begin{array}{l}\displaystyle \frac{\partial w_\phi}{\partial t} - d_2\Delta w_\phi + V_2\cdot \nabla w_\phi + (b_2 + a_2)w_\phi = f_1 + f_2\\
\qquad\qquad + (V_2 - V_1)\cdot\nabla y_{1,\phi} + (b_2 + a_2 - a_1 - b_1) y_{1,\phi} + (d_1 - d_2)\Delta y_{1,\phi}\ \text{ in } Q,\\d_2\partial_nw_\phi + \lambda_2w_\phi = g_1 + g_2 + (d_2 - d_1)\partial_ny_{1,\phi} +  (\lambda_2 - \lambda_1)y_{1,\phi}\ \text{ on } \Sigma,\\w_\phi(0) = y_{1,0} + y_{2,0} \ \text{ in } \Omega.\end{array}\right.\label{F2.7}
\end{equation}
We note that
\[
(d_2 - d_1)\partial_ny_{1,\phi} = \frac{d_2 - d_1}{d_1}(g_1 - \lambda_1y_{1,\phi}).
\]
Then, using \cite[Chapter III]{Lad-Sol-Ura68} or \cite[\S III.2]{Showalter1997} we deduce from \eqref{F2.5} and \eqref{F2.7} that
\begin{equation}
\|w_\phi\|_Q \le C\Big(1 + \|f\|_{L^2(Q)} + \|g\|_{L^2(\Sigma)} + \|y_0\|_{L^2(\Omega)}\Big),
\label{F2.8}
\end{equation}
where $C$ depends on $d_i$, $\lambda_i$, $a_i$, $b_i$, $\Omega$, and $T$. From  \eqref{F2.5} and \eqref{F2.8}, using that $\|y_{2,\phi}\|_Q \le \|w_\phi\|_Q + \|y_{1,\phi}\|_Q$, \eqref{F2.6} follows.\vspace{2mm}

{\em Step III - $\exists y_{2,M} \in L^2(Q)$ such that $F(y_{2,M}) = y_{2,M}$.} Taking
\begin{equation}
M \ge C_2\Big(1 + \|f\|_{L^2(Q)} + \|g\|_{L^2(\Sigma)} + \|y_0\|_{L^2(\Omega)}\Big),
\label{F2.9}
\end{equation}
we infer from \eqref{F2.6} that $\|F(\phi)\|_{L^2(Q)} = \|y_{2,\phi}\|_{L^2(Q)} \le M$. Therefore, we have that $F(\phi) \in W(0,T) \cap B_M$ for every $\phi \in L^2(Q)$, where $B_M$ stands for the closed ball of $L^2(Q)$ of radius $M$ and center 0. Since the embedding $W(0,T) \subset L^2(Q)$ is compact, we find that the image of $F$ is contained in a compact subset of $L^2(Q)$. Hence, we can apply Schauder fixed-point theorem to deduce the existence of a fixed point $y_{2,M}$ for $F$. Replacing $\phi$ by $y_{2,M}$ in \eqref{E2.7} and \eqref{E2.8} we obtain that $y_M = (y_{1,M},y_{2,M}) \in W(0,T)^2$ satisfies the equations
\begin{align}
&\left\{\begin{array}{l}\displaystyle \frac{\partial y_{1,M}}{\partial t} - d_1\Delta y_{1,M} + V_1\cdot\nabla y_{1,M} + \gamma y_{1,M}[\proj_{[-M,+M]}(y_{2,M})]^2\\
\qquad\qquad +a_1y_{1,M} + a_2\proj_{[-M,+M]}(y_{2,M}) = f_1\ \text{ in } Q,\\d_1\partial_ny_{1,M} + \lambda_1y_{1,M} = g_1\ \text{ on } \Sigma,\ y_{1,M}(0) = y_{1,0} \ \text{ in } \Omega,\end{array}\right. \label{F2.10}\\
&\left\{\begin{array}{l}\displaystyle \frac{\partial y_{2,M}}{\partial t} - d_2\Delta y_{2,M} + V_2\cdot\nabla y_{2,M} - \gamma y_{1,M}[\proj_{[-M,+M]}(y_{2,M})]^2\\
\qquad\qquad + b_1y_{1,M} + (b_2 + a_2)y_{2,M} - a_2\proj_{[-M,+M]}(y_{2,M}) = f_2\ \text{ in } Q,\\d_2\partial_ny_{2,M} + \lambda_2y_{2,M} = g_2\ \text{on } \Sigma,\ y_{2,M}(0) = y_{2,0} \ \text{ in } \Omega.\end{array}\right. \label{F2.11}
\end{align}

{\em Step IV - There exists $M > 0$ satisfying \eqref{F2.9} such that $y_M$ is a solution of \eqref{E1.1}--\eqref{E1.2} and satisfies \eqref{E2.1} and \eqref{E2.2}.}
We write the equation \eqref{F2.10} as
\[
\left\{\begin{array}{l}\displaystyle \frac{\partial y_{1,M}}{\partial t} - d_1\Delta y_{1,M} + V_1\cdot\nabla y_{1,M} + \Big(a_1 + \gamma [\proj_{[-M,+M]}(y_{2,M})]^2\Big)y_{1,M} = \tilde{f}\ \text{ in } Q,\\ d_1\partial_ny_{1,M} + \lambda_1y_{1,M} = g_1\ \text{ on } \Sigma,\ y_{1,M}(0) = y_{1,0} \ \text{ in } \Omega,\end{array}\right.
\]
where $\tilde{f} = f_1 - a_2\proj_{[-M,+M]}(y_{2,M})$.

Let us observe that $L^\infty(0,T;L^2(\Omega)) \cap L^2(0,T;H^1(\Omega)) \subset L^3(Q)$. Indeed, interpolating $L^3(\Omega)$ between $L^2(\Omega)$ and $L^6(\Omega)$ we infer
\[
\|w\|_{L^3(\Omega)} \le \|w\|_{L^2(\Omega)}^{\frac{1}{2}}\|w\|_{L^6(\Omega)}^{\frac{1}{2}} \le C_\Omega\|w\|_{L^2(\Omega)}^{\frac{1}{2}}\|w\|_{H^1(\Omega)}^{\frac{1}{2}}\quad \forall w \in H^1(\Omega).
\]
This yields
\begin{align}
\|w\|_{L^3(Q)} &\le T^{\frac{1}{12}}\|w\|_{L^4(0,T;L^3(\Omega))} \le T^{\frac{1}{12}}C_\Omega\|w\|_{L^\infty(0,T;L^2(\Omega))}^{\frac{1}{2}}\|w\|^{\frac{1}{2}}_{L^2(0,T;H^1(\Omega))}\notag\\& \le \frac{T^{\frac{1}{12}}C_\Omega}{2}\|w\|_Q \quad \forall w \in L^\infty(0,T;L^2(\Omega)) \cap L^2(0,T;H^1(\Omega)).\label{F2.12}
\end{align}

From \eqref{F2.5}, \eqref{F2.6}, and \eqref{F2.12} we deduce that
\begin{align*}
&\|y_{1,M}\|_{L^3(Q)} + \|\proj_{[-M,+M]}(y_{2,M})\|_{L^3(Q)} \le \|y_M\|_{L^3(Q)}\\
& \le C_3\Big(1 + \|f\|_{L^2(Q)} + \|g\|_{L^2(\Sigma)} + \|y_0\|_{L^2(\Omega)}\Big).
\end{align*}
Noting that $\frac{1}{3} + \frac{n}{6} < 1$, we infer from \cite[\S III.7]{Lad-Sol-Ura68}, see also \cite{CasasKunisch2025}, that $y_{1,M} \in L^\infty(Q)$ and
\begin{align}\label{F2.13}
\|y_{1,M}\|_{L^\infty(Q)}   \le C_4\Big( & 1 + \|f_1\|_{L^r(0,T;L^p(\Omega))} + \|g_1\|_{L^s(0,T;L^q(\Gamma))} + \|y_0\|_{L^\infty(\Omega)} \\
& + \|f\|_{L^2(Q)} + \|g\|_{L^2(\Sigma)} \Big), \notag
\end{align}
where $C_4$ is independent of $M$. Now, we prove the boundedness of $y_{2,M}$ by a constant independent of $M$. For this purpose, we first consider the equation
\[
\left\{\begin{array}{l}
\displaystyle \frac{\partial w}{\partial t} - d_2\Delta w + V_2\cdot\nabla w  + w = f_2 -  (b_2 + a_2)y_{2,M} - b_1y_{1,M}\\\phantom{\frac{\partial w}{\partial t} - d_2\Delta w + V_2\cdot\nabla w  + w =} + a_2\proj_{[-M,+M]}(y_{2,M})\,\text{in}\, Q,\\
d_2\partial_nw + \lambda_2w = g_2\ \text{ on } \Sigma,\ w(0) = y_{2,0} \ \text{ in } \Omega.\end{array}\right.
\]
This equation has a unique solution $w \in W(0,T)$. Moreover, using again \cite[\S III.7]{Lad-Sol-Ura68} or \cite{CasasKunisch2025} we get the existence of a constant $C_5$ independent of $M$ such that
\begin{align*}
\|w\|_{L^\infty(Q)}   
\le C_5\Big( & \|f_2\|_{L^r(0,T;L^p(\Omega))} + [\|a_2\|_{L^\infty(\Omega)}+\|b_2\|_{L^\infty(\Omega)}] \|y_{2,M}\|_{L^3(Q)} \\
& + \|b_1\|_{L^\infty(\Omega)}\|y_{1,M}\|_{L^3(Q)}
+ \|g_2\|_{L^s(0,T;L^q(\Gamma))} + \|y_{2,0}\|_{L^\infty(\Omega)}\Big).
\end{align*}
Combining this inequality with \eqref{F2.5} and \eqref{F2.6} we obtain
\begin{align*}
\|w\|_{L^\infty(Q)} &\le C_5\Big(\|f_2\|_{L^r(0,T;L^p(\Omega))}  + \|g_2\|_{L^s(0,T;L^q(\Gamma))} + \|y_{2,0}\|_{L^\infty(\Omega)}\\
&+ C_6\Big[1 + \|f\|_{L^2(Q)} + \|g\|_{L^2(\Sigma)} + \|y_0\|_{L^2(\Omega)}\Big]\Big).
\end{align*}
Setting $z = y_{2,M} - w$ we have
\[
\left\{\begin{array}{l}\displaystyle \frac{\partial z}{\partial t} - d_2\Delta z + V_2\cdot\nabla z + (a+1)z = -aw + y_{2,M}\ \text{ in } Q,\\ d_2\partial_nz + \lambda_2z = 0\ \text{ on } \Sigma,\ z(0) = 0 \ \text{ in } \Omega,\end{array}\right.
\]
where
\[
a(x,t) = \left\{\begin{array}{cl}\displaystyle -\gamma y_{1,M}(x,t)\frac{[\proj_{[-M,+M]}(y_{2,M}(x,t))]^2}{y_{2,M}(x,t)}&\text{if } y_{2,M}(x,t) \neq 0,\\0&\text{otherwise}.\end{array}\right.
\]
From \eqref{F2.13} and \eqref{F2.6} we get that $\|a\|_{L^3(Q)} \le \gamma\|y_{1,M}\|_{L^\infty(Q)}\|y_{2,M}\|_{L^3(Q)} \le C_7$ for some constant $C_7$ independent of $M$. Now, from \cite[Theorem III-7.1]{Lad-Sol-Ura68} we obtain that $\|z\|_{L^\infty(Q)} \le C_8$ for some constant depending on $\|a\|_{L^3(Q)}$ and $\|w\|_{L^\infty(Q)}$. Taking into account that $y_{2,M} = w + z$, we conclude that $y_{2,M}$ is bounded by a constant $C_9$ independent of $M$. Therefore, if we select $M > C_9$ satisfying \eqref{F2.9} we get that $\proj_{[-M,+M]}(y_{2,M}) = y_{2,M}$ and consequently $y_M = (y_{1,M},y_{2,M}) \in [W(0,T) \cap L^\infty(Q)]^2$ is a solution of \eqref{E1.1}--\eqref{E1.2} and satisfies \eqref{E2.1} and \eqref{E2.2}.\vspace{2mm}

{\em Uniqueness.} Let us assume that $y, z \in [W(0,T) \cap L^\infty(Q)]^2$ are solutions of the system \eqref{E1.1}--\eqref{E1.2}. Setting $w = y - z$ and subtracting the corresponding equations we get
\begin{align}
&\left\{\begin{array}{l}\displaystyle \frac{\partial w_1}{\partial t} - d_1\Delta w_1 + V_1\cdot\nabla w_1 + \gamma w_1y_2^2 + a_1w_1\\
\qquad+ [a_2 + \gamma z_1(y_2+z_2)]w_2 = 0\ \text{ in } Q,\\d_1\partial_nw_1 + \lambda_1w_1 = 0\ \text{ on } \Sigma,\ w_1(0) = 0 \ \text{ in } \Omega,\end{array}\right. \label{F2.14}\\
&\left\{\begin{array}{l}\displaystyle \frac{\partial w_2}{\partial t} - d_2\Delta w_2 + V_2\cdot\nabla w_2 + [b_2 -\gamma z_1(y_2 + z_2)]w_2\\
\qquad+ (b_1 - \gamma y_2^2)w_1 = 0 \ \text{ in } Q,\\d_2\partial_nw_2 + \lambda_2w_2 = 0\ \text{ on } \Sigma,\ w_2(0) = 0 \ \text{ in } \Omega,\end{array}\right. \label{F2.15}
\end{align}
Testing \eqref{F2.14} with $w_1$ and \eqref{F2.15} with $w_2$, integrating in $\Omega \times (0,t)$, adding the resulting identities, taking into account that the coefficients belong to $L^\infty(Q)$, and using Gronwall's inequality $w = 0$ follows.
\end{proof}
The estimate \eqref{E2.2} can be improved if we only look at $\|y_{u,1}\|_{C(\bar Q)}$. In this case, we get the inequality replacing $\|g_2 + u\chi_{\Sigma_c}\|_{L^s(0,T;L^q(\Gamma))}$ by $\|g_2 + u\chi_{\Sigma_c}\|_{L^2(\Sigma)}$; see \eqref{F2.13}. We will use the following result.
\begin{lemma}
Suppose that the assumption \ref{A2.1} holds. Then, there exist constants $K_\infty$, $K_2$, and $K_\sigma$ with $\sigma > n + 1$  such that
\begin{equation}
\left\{\begin{array}{l}\displaystyle\|y_{u_2,1} - y_{u_1,1}\|_{C(\bar Q)} \le K_\infty\|u_2 - u_1\|_{L^2(\Sigma_c)},\\\displaystyle\|y_{u_2} - y_{u_1}\|_{W(0,T)} \le K_2\|u_2 - u_1\|_{L^2(\Sigma_c)},\\\displaystyle\|y_{u_2} - y_{u_1}\|_{C(\bar Q)} \le K_\sigma\|u_2 - u_1\|_{L^\sigma(\Sigma_c)},
\end{array}\right.\quad \forall u_1, u_2 \in \Uad.
\label{E2.3}
\end{equation}
\label{L2.1}
\end{lemma}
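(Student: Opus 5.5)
Set $w = y_{u_2} - y_{u_1}$ and $v = u_2 - u_1$, and write $y = y_{u_2}$, $z = y_{u_1}$. Subtracting the two state systems gives a linear system for $w$ of the same form as \eqref{F2.14}--\eqref{F2.15} in the uniqueness proof of Theorem \ref{T2.1}. The only change is that the second boundary condition becomes $d_2\partial_n w_2 + \lambda_2 w_2 = v\chi_{\Sigma_c}$. The coefficients in this system are $\gamma y_2^2$, $\gamma z_1(y_2+z_2)$, $a_i$ and $b_i$. Since $\Uad$ is bounded in $L^\infty(\Sigma_c)$, estimate \eqref{E2.2} bounds $\|y\|_{C(\bar Q)}$ and $\|z\|_{C(\bar Q)}$ by a constant $\eta(\cdot)$ that is uniform over $\Uad$. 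Hence all coefficients are bounded in $L^\infty(Q)$ by a constant $C_U$ that does not depend on $u_1,u_2\in\Uad$. This uniformity is what makes $K_\infty$, $K_2$ and $K_\sigma$ constants.

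\emph{Energy ($W(0,T)$) estimate.} Test the first equation with $w_1$ and the second with $w_2$, integrate over $\Omega\times(0,t)$ and add. The zero-order terms are bounded by $C_U(\|w_1\|^2_{L^2(\Omega)}+\|w_2\|^2_{L^2(\Omega)})$, and the convection terms are absorbed as in Step I. For the boundary term we use $\int_{\Gamma_c} v w_2 \le \|v\|_{L^2(\Gamma_c)}C_\Gamma\|w_2\|_{H^1(\Omega)}$ followed by Young's inequality. Gronwall's inequality then gives $\|w\|_Q\le C\|v\|_{L^2(\Sigma_c)}$. The bound on $\|\partial_t w_i\|_{L^2(0,T;H^1(\Omega)^*)}$ follows directly from the equations, since all coefficients are bounded. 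This proves the second inequality.

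\emph{First inequality.} Treat the equation for $w_1$ as a scalar linear parabolic equation. It has bounded coefficients and homogeneous Robin data, and its right-hand side is $-[a_2+\gamma z_1(y_2+z_2)]w_2$, which lies in $L^\infty(0,T;L^2(\Omega))\cap L^2(0,T;H^1(\Omega))\subset L^3(Q)$ by \eqref{F2.12}, with norm at most $C\|w\|_Q$. Since $\frac13+\frac n6<1$, the result of \cite[\S III.7]{Lad-Sol-Ura68} (used as in \eqref{F2.13}) yields
\[
\|w_1\|_{L^\infty(Q)}\le C\|w_2\|_{L^3(Q)}\le C\|v\|_{L^2(\Sigma_c)}.
\]
Continuity of $w_1$ follows because it is a difference of continuous functions, so the estimate holds in $C(\bar Q)$.

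\emph{Third inequality.} Choose $\sigma>n+1$ and set $s=q=\sigma$. Then $\frac1\sigma+\frac{n-1}{2\sigma}=\frac{n+1}{2\sigma}<\frac12$, so Assumption \ref{A2.1} holds with Neumann datum $v\in L^\sigma(\Sigma_c)$. For $w_1$ we already have the bound in terms of $\|v\|_{L^2}\le C\|v\|_{L^\sigma}$. For $w_2$, view its equation as a scalar linear equation with bounded coefficients, right-hand side $-(b_1-\gamma y_2^2)w_1\in L^\infty(Q)$, and boundary datum $v\chi_{\Sigma_c}$. The $L^\infty$ estimate of \cite[\S III.7]{Lad-Sol-Ura68}/\cite{CasasKunisch2025} then gives
\[
\|w_2\|_{L^\infty(Q)}\le C\bigl(\|w_1\|_{L^\infty(Q)}+\|v\|_{L^\sigma(\Sigma_c)}\bigr)\le K_\sigma\|v\|_{L^\sigma(\Sigma_c)}.
\]

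The main point requiring care is the uniformity of the constants with respect to $u_1,u_2$. This rests on the uniform bound of $\Uad$ in $L^\infty$ combined with \eqref{E2.2}, and on the $L^\infty$ estimates for linear equations depending only on bounds of the coefficients. No step seems seriously obstructed.
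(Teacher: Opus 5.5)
Your proposal is correct and follows essentially the same route as the paper. The uniform $C(\bar Q)$ bounds over $\Uad$ from \eqref{E2.2} make the coefficients of the difference system bounded, and an energy estimate gives the $W(0,T)$ bound. An $L^\infty$ estimate for the scalar equation of the first component gives the first inequality. The $L^\infty$ estimate of \cite{CasasKunisch2025}, applied to the second equation with boundary datum in $L^\sigma(\Sigma_c)$, gives the third.

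The differences are only cosmetic. You measure the right-hand side $[a_2+\gamma z_1(y_2+z_2)]w_2$ in $L^3(Q)$ via \eqref{F2.12}, where the paper uses $L^\infty(0,T;L^2(\Omega))$. You also get continuity directly from $w_1 = y_{u_2,1}-y_{u_1,1}\in C(\bar Q)$, where the paper invokes \cite[Chapter III-\S 10]{Lad-Sol-Ura68}.
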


\begin{proof}
Set $e = y_{u_2} - y_{u_1}$. Subtracting the equations satisfied by $y_{u_2}$ and $y_{u_1}$ we obtain
\begin{align}
&\left\{
\begin{array}{l}
 \displaystyle\frac{\partial e_1}{\partial t} - d_1\Delta e_1 
 + V_{1}\cdot\nabla e_1 + (a_1 + \gamma y_{u_2,2}^2)e_1\\ \phantom{\frac{\partial e_1}{\partial t} - d_1\Delta e_1 } + [\gamma y_{u_1,1}(y_{u_2,2} + y_{u_1,2}) + a_2]e_2 = 0\text{ in }Q, \\
d_1\partial_n e_1 + \lambda_1e_1 = 0\text{ on }\Sigma,\qquad e_1(0) = 0\text{ in }\Omega, 
\end{array}
\right.
\label{E2.4} \\
&\left\{
\begin{array}{l}
\displaystyle  \frac{\partial e_2}{\partial t} - d_2\Delta e_2 
+ V_{2}\cdot\nabla e_2 + [b_2 - \gamma y_{u_1,1}(y_{u_2,2} + y_{u_1,2})]e_2\\ \phantom{\frac{\partial e_1}{\partial t} - d_1\Delta e_1 }  + (b_1 - \gamma y^2_{u_2,2})e_1 = 0\text{ in }Q, \\
  d_2\partial_n e_2 + \lambda_2e_2 = (u_2 - u_1)\chi_{\Sigma_c}\text{ on }\Sigma,\qquad e_2(0) = 0\text{ in }\Omega. 
\end{array}
\right.
\label{E2.5} 
\end{align}
From \eqref{E2.2} we infer the existence of a constant $C$ such that $\|y_u\|_{C(\bar Q)} \le C$ for every $u \in \Uad$. Then, all the coefficients in the linear system above are bounded functions. As a consequence the existence and uniqueness of a solution $e \in W(0,T)^2$ follows; see \cite[Chapter VII]{Lad-Sol-Ura68} or \cite[\S III.2]{Showalter1997}. We also have that $\|e\|_{W(0,T)} \le K_2\|u_2 - u_1\|_{L^2(\Sigma_c)}$ for every $u_1, u_2 \in \Uad$. Using this estimate and again \cite[Chapter III-\S 7]{Lad-Sol-Ura68} and \cite[Chapter III-\S 10]{Lad-Sol-Ura68} we deduce from \eqref{E2.4} that
\[
\|e_1\|_{C(\bar Q)} \le C\|[\gamma y_{u_1,1}(y_{u_2,2} + y_{u_1,2}) + a_2]e_2\|_{L^\infty(0,T;L^2(\Omega))} \le K_\infty\|u_2 - u_1\|_{L^2(\Sigma_c)}.
\]
This proves \eqref{E2.3}. The last estimate of the theorem follows from the above estimate and the results of \cite{CasasKunisch2025} applied to the equation \eqref{E2.5}.
\end{proof}

From Theorem \ref{T2.1} we know that $G:L^s(0,T;L^q(\Gamma_c)) \longrightarrow [W(0,T) \cap C(\bar Q)]^2$ is a well defined mapping. Next we establish its differentiability.

\begin{theorem}
The mapping $G$ is of class $C^\infty$. Moreover, given $u, v, v_1, v_2 \in L^s(0,T;L^q(\Gamma_c))$, we have that $z_{u,v} = DG(u)v$ and $z_{u,v_1,v_2} = D^2G(u)(v_1,v_2)$ are the unique solutions of the equations
\begin{align}
&\left\{\begin{array}{l}
 \displaystyle \frac{\partial z_1}{\partial t} - d_1\Delta z_1 + V_{1}\cdot\nabla z_1 + (\gamma y_{u,2}^2 + a_1)z_1 + (a_2 + 2\gamma y_{u,1}y_{u,2})z_2 = 0\text{ in }Q, \\
  d_1\partial_n z_1 + \lambda_1z_1 = 0\text{ on }\Sigma,\qquad z_1(0) = 0\text{ in }\Omega, 
\end{array}
\right.
\label{E2.6} \\
&\left\{
\begin{array}{l}
\displaystyle  \frac{\partial z_2}{\partial t} - d_2\Delta z_2 
+ V_{2}\cdot\nabla z_2 + (b_2 - 2\gamma y_{u,1}y_{u,2})z_2 + (b_1 - \gamma y_{u,2}^2)z_1 = 0\text{ in }Q, \\ d_2\partial_n z_2 +\lambda_2 z_2 = v\chi_{\Sigma_c}\text{ on }\Sigma,\qquad z_2(0) = 0\text{ in }\Omega, 
\end{array}
\right.
\label{E2.7} 
\end{align}
and
\begin{align}
&\left\{\begin{array}{l}
 \displaystyle \frac{\partial z_1}{\partial t} - d_1\Delta z_1 + V_{1}\cdot\nabla z_1 + (\gamma y_{u,2}^2 + a_1)z_1 + (a_2 + 2\gamma y_{u,1}y_{u,2})z_2\\
 \phantom{\displaystyle \frac{\partial z_1}{\partial t} - d_1\Delta z_1 + V_{1}\cdot\nabla z_1} + 4\gamma y_{u,2}z_{u,v,1}z_{u,v,2} + 2\gamma y_{u,1}z_{u,v,2}^2 = 0\text{ in }Q, \\
  d_1\partial_n z_1 + \lambda_1z_1 = 0\text{ on }\Sigma,\qquad z_1(0) = 0\text{ in }\Omega, 
\end{array}
\right.
\label{E2.8} \\
&\left\{
\begin{array}{l}
\displaystyle  \frac{\partial z_2}{\partial t} - d_2\Delta z_2 
+ V_{2}\cdot\nabla z_2 + (b_2 - 2\gamma y_{u,1}y_{u,2})z_2 + (b_1 - \gamma y_{u,2}^2)z_1\\
\phantom{\displaystyle \frac{\partial z_1}{\partial t} - d_1\Delta z_1 + V_{1}\cdot\nabla z_1} - 4\gamma y_{u,2}z_{u,v,1}z_{u,v,2} - 2\gamma y_{u,1}z_{u,v,2}^2 = 0\text{ in }Q, \\ d_2\partial_n z_2 +\lambda_2 z_2 = 0\text{ on }\Sigma,\qquad z_2(0) = 0\text{ in }\Omega, 
\end{array}
\right.
\label{E2.9} 
\end{align}
respectively, where $z_{u,v} = (z_{u,v,1},z_{u,v,2})$.
\label{T2.2}
\end{theorem}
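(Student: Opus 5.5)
We will prove the theorem with the implicit function theorem applied to a mapping that encodes the state equation. Let $Y = [W(0,T)\cap C(\bar Q)]^2$, but equipped so that the linear parabolic operator is an isomorphism. Concretely, take $Y$ to be the space of $y = (y_1,y_2) \in [W(0,T)\cap C(\bar Q)]^2$ such that
\[
\frac{\partial y_i}{\partial t} - d_i\Delta y_i + V_i\cdot\nabla y_i \in L^r(0,T;L^p(\Omega)), \qquad d_i\partial_n y_i + \lambda_i y_i \in L^s(0,T;L^q(\Gamma)),
\]
understood in the weak sense of Definition \ref{D2.1}. We endow $Y$ with the graph norm. The linear results of \cite[\S III.7]{Lad-Sol-Ura68} and \cite{CasasKunisch2025}, used in the proof of Theorem \ref{T2.1}, show that $Y$ is a Banach space. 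Let $Z = [L^r(0,T;L^p(\Omega)) \times L^s(0,T;L^q(\Gamma))]^2 \times C(\bar\Omega)^2$. Define $\F: Y \times L^s(0,T;L^q(\Gamma_c)) \to Z$ componentwise as the residual of \eqref{E1.1}--\eqref{E1.2}. For the first component this is
\[
\frac{\partial y_1}{\partial t} - d_1\Delta y_1 + V_1\cdot\nabla y_1 + \gamma y_1y_2^2 + a\cdot y - f_1,
\]
and analogously for the second. The boundary residuals are $d_1\partial_n y_1+\lambda_1 y_1 - g_1$ and $d_2\partial_n y_2 + \lambda_2 y_2 - g_2 - u\chi_{\Gamma_c}$. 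The initial residuals are $y(0)-y_0$.

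\textbf{Smoothness of $\F$.} The only nonlinearity is the cubic polynomial $y_1y_2^2$. Since $Y \hookrightarrow C(\bar Q)^2$ continuously and $C(\bar Q) \hookrightarrow L^r(0,T;L^p(\Omega))$, the map $y \mapsto y_1y_2^2$ is a continuous trilinear form composed with the diagonal. It is therefore $C^\infty$ from $Y$ into $L^r(0,T;L^p(\Omega))$. All remaining terms are continuous and affine in $(y,u)$. Hence $\F$ is of class $C^\infty$, and by Theorem \ref{T2.1} we have $\F(G(u),u) = 0$ for every $u$.

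\textbf{Invertibility of the partial derivative (main step).} We must show that $\partial_y\F(y_u,u): Y \to Z$ is an isomorphism. This amounts to unique solvability in $Y$ of the linear system with coefficients
\[
\gamma y_{u,2}^2 + a_1,\quad a_2 + 2\gamma y_{u,1}y_{u,2},\quad b_2 - 2\gamma y_{u,2}y_{u,1},\quad b_1 - \gamma y_{u,2}^2,
\]
all in $L^\infty(Q)$, with arbitrary right-hand sides in $Z$. We first obtain existence, uniqueness and a $W(0,T)^2$ bound exactly as for \eqref{E2.4}--\eqref{E2.5}: energy estimates plus Gronwall, or \cite[Chapter VII]{Lad-Sol-Ura68}. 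We then move the coupling terms to the right-hand side; each is an $L^\infty$ coefficient times a $W(0,T)\subset L^3(Q)$ function. Bootstrapping each scalar equation through the $L^\infty$/continuity theory of \cite{CasasKunisch2025} and \cite[\S III.7, III.10]{Lad-Sol-Ura68} then gives $C(\bar Q)$ regularity, with estimates in terms of the data. Two points need care. First, the coupling right-hand side is initially only in $L^3(Q)$ or $L^\infty(0,T;L^2)$, which already satisfies $\frac13+\frac n6<1$, as noted in Step IV of Theorem \ref{T2.1}. Second, we must make sure the definition of $Y$ yields a closed graph; continuity of the inverse then follows from the open mapping theorem. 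This is where I expect the main technical work.

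\textbf{Conclusion.} The implicit function theorem gives that $G$ is $C^\infty$ near each $u$, hence everywhere; note that $G$ is single-valued by the uniqueness part of Theorem \ref{T2.1}. Differentiating $\F(G(u),u)=0$ once in direction $v$ gives $\partial_y\F\, z + \partial_u\F\, v = 0$. Using $\partial_y(y_1y_2^2)z = y_2^2z_1 + 2y_1y_2z_2$, this is exactly \eqref{E2.6}--\eqref{E2.7}, with $v\chi_{\Sigma_c}$ entering through the boundary condition. Differentiating a second time, the mixed $u$-derivative vanishes because $\F$ is affine in $u$. The only new terms come from the second derivative of $y_1y_2^2$, namely
\[
2y_2z_{1}^{(1)}z_2^{(2)} + 2y_2z_1^{(2)}z_2^{(1)} + 2y_1z_2^{(1)}z_2^{(2)},
\]
which gives \eqref{E2.8}--\eqref{E2.9}; the stated form corresponds to $v_1=v_2=v$. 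Uniqueness of $z_{u,v}$ and $z_{u,v_1,v_2}$ is the uniqueness for the linearized system established in the invertibility step.
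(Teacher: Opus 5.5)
Your proposal is correct and follows essentially the same route as the paper: the implicit function theorem applied to the residual map $\F$ on the graph-normed space $\Y$, with invertibility of $\frac{\partial\F}{\partial y}(y_u,u)$ coming from $W(0,T)$ solvability followed by the $C(\bar Q)$ regularity theory for linear parabolic equations, and \eqref{E2.6}--\eqref{E2.9} obtained by differentiating $\F(G(u),u)=0$. The paper requires $\partial_n y_i \in L^s(0,T;L^q(\Gamma))$ in $\Y$ rather than your $d_i\partial_n y_i+\lambda_i y_i$, which is equivalent since $y_i\in C(\bar Q)$; your second-derivative computation is right, and you correctly note that the displayed system is the case $v_1=v_2=v$.
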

    
\begin{proof}
We define the vector space
\begin{align*}
\Y = \{y \in [W(0,T) \cap C(\bar Q)]^2 : &\frac{\partial y_i}{\partial t} - d_i\Delta y_i 
 + V_i\cdot\nabla y_i \in L^r(0,T;L^p(\Omega)) \text{ and }\\
 &\partial_ny_i \in L^s(0,T;L^q(\Gamma)), \ i = 1, 2\}.
\end{align*}
This is a Banach space when endowed with the graph norm. We define the mapping
\begin{align*}
&\F:\Y \times L^s(0,T;L^q(\Gamma_c)) \longrightarrow L^r(0,T;L^p(\Omega))^2 \times L^s(0,T;L^q(\Gamma))^2 \times C(\bar\Omega)^2,\\
&\F(y,u) = \left[\begin{array}{c}\displaystyle \frac{\partial y_1}{\partial t} - d_1\Delta y_1 + V_1\cdot\nabla y_1 + \gamma y_1y_2^2 + a\cdot y - f_1\vspace{2mm}\\\displaystyle\frac{\partial y_2}{\partial t} - d_2\Delta y_2 + V_2\cdot\nabla y_2 - \gamma y_1y_2^2 + b\cdot y - f_2\\\displaystyle d_1\partial_n y_1 + \lambda_1y_1 - g_1\\d_2\partial_ny_2 + \lambda_2y_2 - g_2 - u\chi_{\Sigma_c}\\y(0) - y_0\end{array}\right].
\end{align*}
Obviously $\F$ is of class $C^\infty$ and $\F(G(u),u) = 0$ for every $u \in L^s(0,T;L^q(\Gamma_c))$. Using the implicit function theorem we deduce that $G$ is of class $C^\infty$. To this end we only need to prove that the linear operator
\begin{align*}
&\frac{\partial\F}{\partial y}(y_u,u):\Y \longrightarrow L^r(0,T;L^p(\Omega))^2 \times L^s(0,T;L^q(\Gamma))^2 \times C(\bar\Omega)^2\\
&\frac{\partial\F}{\partial y}(y_u,u)z = \left[\begin{array}{c}\displaystyle \frac{\partial z_1}{\partial t} - d_1\Delta z_1 + V_1\cdot\nabla z_1 + (a_1 + \gamma y_{u,2}^2)z_1 + (a_2 + 2\gamma y_{u,1}y_{u,2})z_2\vspace{2mm}\\\displaystyle\frac{\partial z_2}{\partial t} - d_2\Delta z_2 + V_2\cdot\nabla z_2 + (b_2 - 2\gamma y_{u,1}y_{u,2})z_2 + (b_1 - \gamma y_{u,2}^2)z_1\\\displaystyle d_1\partial_n z_1 + \lambda_1z_1\\d_2\partial_nz_2 + \lambda_2z_2\\z(0)\end{array}\right]
\end{align*}
is an isomorphism. Using again \cite[Chapter VII]{Lad-Sol-Ura68} or \cite[\S III.2]{Showalter1997} we find that the equation $\frac{\partial\F}{\partial y}(y_u,u)z = (h,g,z_0)$ has a unique solution $z$ in $W(0,T)^2$ for every $h \in L^r(0,T;L^p(\Omega))^2$, $g \in L^s(0,T;L^q(\Gamma))^2$, and $z_0 \in C(\bar\Omega)^2$. Moreover, applying \cite[Chapter III-\S 7 and \S 10]{Lad-Sol-Ura68} we also get that $z \in C(\bar Q)^2$. Finally, the equations \eqref{E2.6}--\eqref{E2.9} follow by differentiation of the identity $\F(G(u),u) = 0$.
\end{proof}

We end this section with the following technical lemma.

\begin{lemma}
For every $v \in L^2(\Sigma_c)$ and $u \in L^s(0,T;L^q(\Gamma_c))$, the system \eqref{E2.6}--\eqref{E2.7} has a unique solution $z_{u,v}$ in $W(0,T)$. Further, we have that $z_{u,v,1} \in W(0,T) \cap C(\bar Q)$ and $z_{u,v,2} \in W(0,T) \cap L^{\sigma_0}(Q)$ for every $\sigma_0 < \frac{2(n + 2)}{n - 1}$, and there exist constants $K_W$, $K_1$ and $K_\sigma$ such that for every $u \in \Uad$
\begin{align}
&\|z_{u,v}\|_{W(0,T)} \ \, \le K_W\|v\|_{L^2(\Sigma_c)},\label{E2.10}\\
&\|z_{u,v,1}\|_{C(\bar Q)} \ \ \le K_1\|v\|_{L^2(\Sigma_c)},
\label{E2.11}
\\
&\|z_{u,v,2}\|_{L^{\sigma_0}(Q)} \le K_{\sigma_0}\|v\|_{L^2(\Sigma_c)}.
\label{E2.12}
\end{align}
\label{L2.2}
\end{lemma}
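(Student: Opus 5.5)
The plan is to treat \eqref{E2.6}--\eqref{E2.7} as a linear parabolic system whose zero-order coefficients are bounded, uniformly for $u \in \Uad$. By Theorem \ref{T2.1}, estimate \eqref{E2.2}, and the boundedness of $\Uad$ in $L^\infty(\Sigma_c)$, there is a constant $C_\infty$ with $\|y_u\|_{C(\bar Q)} \le C_\infty$ for all $u \in \Uad$. For a general $u \in L^s(0,T;L^q(\Gamma_c))$ the coefficients are still bounded, but with a bound depending on $u$. Hence the coefficients $\gamma y_{u,2}^2 + a_1$, $a_2 + 2\gamma y_{u,1}y_{u,2}$, $b_2 - 2\gamma y_{u,1}y_{u,2}$ and $b_1 - \gamma y_{u,2}^2$ all belong to $L^\infty(Q)$. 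Existence and uniqueness of $z_{u,v} \in W(0,T)^2$ for boundary data $v\chi_{\Sigma_c} \in L^2(\Sigma)$ then follow from the standard theory cited before (\cite[Chapter VII]{Lad-Sol-Ura68}, \cite[\S III.2]{Showalter1997}), obtained by Galerkin approximation or by the Lions theorem.

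For \eqref{E2.10}, I will test the first equation with $e^{-2\alpha t}z_1$ and the second with $e^{-2\alpha t}z_2$, and add. The convection terms are absorbed exactly as in Step I of the proof of Theorem \ref{T2.1}. The boundary term is handled by
\[
\int_\Gamma v\chi_{\Gamma_c} z_2 \le \|v\|_{L^2(\Gamma_c)} C_\Gamma \|z_2\|_{H^1(\Omega)} \le \tfrac{d_2}{4}\|z_2\|_{H^1(\Omega)}^2 + C\|v\|^2_{L^2(\Gamma_c)}.
\]
All zero-order terms are bounded by $C_\infty$ times $\|z\|_{L^2}^2$, so Gronwall gives $\|z\|_Q \le C\|v\|_{L^2(\Sigma_c)}$. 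The bound on the time derivative in $L^2(0,T;H^1(\Omega)^*)$ is then read off from the equations. The constant depends only on $C_\infty$ and the data, which gives uniformity over $\Uad$.

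For \eqref{E2.12}, I will use the embedding $L^\infty(0,T;L^2(\Omega))\cap L^2(0,T;H^1(\Omega)) \subset L^{2(n+2)/n}(Q)$, obtained by interpolation as in \eqref{F2.12}. However, this exponent is not the claimed $\frac{2(n+2)}{n-1}$; the extra integrability must come from the Neumann datum. I will split $z_2 = \zeta + \xi$:
\begin{itemize}
\item $\zeta$ solves the heat-type equation $\partial_t\zeta - d_2\Delta\zeta + V_2\cdot\nabla\zeta + \zeta = 0$ with $d_2\partial_n\zeta + \lambda_2\zeta = v\chi_{\Sigma_c}$ and $\zeta(0)=0$;
\item $\xi$ solves an equation with zero boundary data and a right-hand side $h$ that is linear in $z$ with bounded coefficients, so $h \in L^\infty(0,T;L^2(\Omega))$ is controlled by \eqref{E2.10}.
\end{itemize}
Because $h \in L^\infty(0,T;L^2(\Omega))$ and $\frac1\infty + \frac n4 < 1$ for $n \le 3$, the $L^\infty$ theory of \cite[\S III.7]{Lad-Sol-Ura68} gives $\xi \in C(\bar Q)$, bounded by $C\|v\|_{L^2(\Sigma_c)}$.

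The main obstacle is the estimate of $\zeta$ in $L^{\sigma_0}(Q)$ for every $\sigma_0 < \frac{2(n+2)}{n-1}$ with $L^2(\Sigma)$ boundary data. First I would try a duality argument. For $\psi \in L^{\sigma_0'}(Q)$, let $\varphi$ solve the backward adjoint problem with zero final datum. Then $\int_Q \zeta\psi = \int_{\Sigma_c} v\varphi$, so it suffices to show $\|\varphi\|_{L^2(\Sigma)} \le C\|\psi\|_{L^{\sigma_0'}(Q)}$. By maximal parabolic regularity and interpolation, $\varphi \in L^{2}(0,T;H^{1/2+\varepsilon}(\Omega))$ whenever $\psi \in L^{\sigma_0'}(Q)$, provided $\frac{1}{\sigma_0'} < \frac{1}{2} + \frac{n+1}{2(n+2)} \cdot$(the corresponding scaling). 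A scaling count (one derivative in space is worth two in time, plus half a derivative for the trace) gives exactly the critical exponent $\frac{2(n+2)}{n-1}$, with strict inequality giving the $\varepsilon$ room. The trace theorem then finishes the argument. If this fine regularity is delicate on Lipschitz domains, the alternative is to invoke the results of \cite{CasasKunisch2025}, which treat linear parabolic equations with Robin data in $L^s(0,T;L^q(\Gamma))$ and give the integrability of solutions in $L^{\sigma_0}(Q)$.

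For \eqref{E2.11}, I will view the first equation as a Robin problem with zero boundary data and right-hand side $-(a_2 + 2\gamma y_{u,1}y_{u,2})z_2 \in L^{\sigma_0/2}(Q)$ with $\sigma_0$ near $\frac{2(n+2)}{n-1}$. Alternatively, I use $z_2 \in L^\infty(0,T;L^2(\Omega))$ directly, as in the proof of Lemma \ref{L2.1}: since $\frac{n}{4} < 1$, \cite[Chapter III-\S 7 and \S 10]{Lad-Sol-Ura68} yields $z_{u,v,1} \in C(\bar Q)$ with $\|z_{u,v,1}\|_{C(\bar Q)} \le C\|z_2\|_{L^\infty(0,T;L^2(\Omega))} \le K_1\|v\|_{L^2(\Sigma_c)}$ by \eqref{E2.10}.
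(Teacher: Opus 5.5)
\paragraph{Assessment.} Your existence and uniqueness argument and your proof of \eqref{E2.10} follow the paper. So does your second variant for \eqref{E2.11}, which uses $z_{u,v,2}\in L^\infty(0,T;L^2(\Omega))$ and \cite[Chapter III-\S 7 and \S 10]{Lad-Sol-Ura68}. Your splitting $z_{u,v,2}=\zeta+\xi$ with $\|\xi\|_{L^\infty(Q)}\le C\|v\|_{L^2(\Sigma_c)}$ is also correct. (In your first variant for \eqref{E2.11}, the right-hand side lies in $L^{\sigma_0}(Q)$, not $L^{\sigma_0/2}(Q)$. With $L^{\sigma_0/2}$ the $L^\infty$ theory would not apply for $n=3$.)

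\paragraph{The gap.} The gap is the $L^{\sigma_0}$ bound for $\zeta$, which is the entire content of \eqref{E2.12}. You correctly reduce it, by duality, to the trace bound
$\|\varphi\|_{L^2(\Sigma)}\le C\|\psi\|_{L^{p}(Q)}$ for the adjoint, with $p=\sigma_0'>\frac{2(n+2)}{n+5}$. Your scaling count identifies the right threshold, but nothing beyond the scaling count is offered, and the condition you display is left incomplete. The mechanism you name also does not deliver the bound as stated:
\begin{itemize}
\item Maximal $W^{2,1}_p$-regularity for the Robin problem is not available on a general Lipschitz domain for $p\in(\frac{2(n+2)}{n+5},2)$. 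For $p=2$ it already fails on nonconvex polygons.
\item Even granting it, passing from $L^p(Q)$ with $p<2$ to $L^2(0,T;H^{1/2+\varepsilon}(\Omega))$ needs an anisotropic mixed-norm embedding that you do not supply.
\end{itemize}
The fallback does not close the gap either. As used in this paper, \cite{CasasKunisch2025} provides boundedness and continuity under $\frac1s+\frac{n-1}{2q}<\frac12$, a condition that $L^2(\Sigma)$ data violate. So by itself it gives no $L^{\sigma_0}(Q)$ bound for $v\in L^2(\Sigma_c)$.

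\paragraph{How the paper proceeds.} The paper avoids fine regularity altogether. It interpolates the linear map $v\mapsto z_{u,v,2}$ between two endpoint estimates that are already available:
\begin{itemize}
\item $L^1(\Sigma_c)\to L^{\sigma_1}(Q)$ for every $\sigma_1<\frac{n+2}{n}$. This comes from the measure-data result \cite[Theorem 6.3]{Casas-1997}, namely $z_{u,v,2}\in L^{r_1}(0,T;W^{1,p_1}(\Omega))$ with $\frac{2}{r_1}+\frac{n}{p_1}>n+1$.
\item $L^{q_1}(\Sigma_c)\to L^\infty(Q)$ for $q_1>n+1$, which is the case $s=q=q_1$ of Assumption \ref{A2.1}.
\end{itemize}
Riesz--Thorin interpolation with $\frac12=\alpha+\frac{1-\alpha}{q_1}$ then gives $L^2(\Sigma_c)\to L^{\sigma_1/\alpha}(Q)$. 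Here $\sigma_1/\alpha$ can be taken arbitrarily close to $\frac{2(n+2)}{n-1}$.

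\paragraph{If you keep the duality route.} You must actually prove the trace bound. One way is to use the solution operator $S(\tau)$ of the homogeneous adjoint problem and establish
$\|S(\tau)f\|_{L^2(\Gamma)}\le C\tau^{-\beta}\|f\|_{L^p(\Omega)}$ with $\beta=\frac n2(\frac1p-\frac12)+\frac14$. This combines $L^p$--$L^2$ smoothing, $L^2$--$H^1$ smoothing, and $\|w\|^2_{L^2(\Gamma)}\le C\|w\|_{L^2(\Omega)}\|w\|_{H^1(\Omega)}$. Young's inequality in time then requires $\beta<\frac32-\frac1p$, which is exactly $p>\frac{2(n+2)}{n+5}$. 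Because $V_2$ depends on $t$, these smoothing bounds must be established for the evolution family rather than for a semigroup. That is extra work the paper's interpolation argument avoids.
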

\begin{proof}
From \cite[Chapter VII]{Lad-Sol-Ura68} or \cite[\S III.2]{Showalter1997} we know the existence of a unique solution $z_{u,v} \in W(0,T)$ for every $u \in L^s(0,T;L^q(\Gamma_c))$ and $v \in L^2(\Sigma_c)$. Furthermore, there exists a constant $K_W$ such that $\|z_{u,v}\|_{W(0,T)} \le K_W\|v\|_{L^2(\Sigma_c)}$ for every $u \in \Uad$. Then, using this regularity of $z_{u,v,2}$ and applying \cite[Chapter III-\S 10]{Lad-Sol-Ura68} to the equation \eqref{E2.6} we deduce the estimate \eqref{E2.11}. To prove \eqref{E2.12} we proceed by interpolation. For every $1 \le \sigma_1 < \frac{n + 2}{n}$ the estimate $\|z_{u,v,2}\|_{L^{\sigma_1}(Q)} \le C_{\sigma_1}\|v\|_{L^1(\Sigma_c)}$ holds. Indeed, from \cite[Theorem 6.3]{Casas-1997} we know that $z_{u,v,2} \in L^{r_1}(0,T;W^{1,p_1}(\Omega))$ for every $r_1, p_1 \in [1,2)$ satisfying $\frac{2}{r_1} + \frac{n}{p_1} > n + 1$ and there exists a constant $C_{r_1,p_1}$ such that $\|z_{u,v,2}\|_{L^{r_1}(0,T;W^{1,p_1}(\Omega))} \le C_{r_1,p_1}\|v\|_{L^1(\Sigma_c)}$. Now, it suffices to observe that for every $\sigma_1 < \frac{n + 2}{n}$ there exist $r_1$ and $p_1$ as above such that $L^{r_1}(0,T;W^{1,p_1}(\Omega)) \subset L^{\sigma_1}(Q)$ to deduce the estimate. On the other hand, for every $q_1 > n + 1$ and $v \in L^{q_1}(\Sigma_c)$, we get  $\|z_{u,v,2}\|_{L^\infty(Q)} \le C_2\|v\|_{L^{q_1}(\Sigma_c)}$. Since $\frac{1}{2} = \frac{\alpha}{1} + \frac{1 - \alpha}{q_1}$ for $\alpha = \frac{q_1 - 2}{2(q_1 - 1)}$, we obtain by interpolation that $z_{u,v,2} \in L^\sigma(Q)$ for $\sigma = \frac{\sigma_1}{\alpha}$. Using that $\sigma_1 < \frac{n + 2}{n}$ and $q_1 > n +1$ can be taken arbitrarily close to $\frac{n + 2}{n}$ and $n + 1$, respectively, we deduce that $\sigma < \frac{2(n + 2)}{n -1}$, but it can also be arbitrarily close to $\frac{2(n + 2)}{n -1}$. The interpolation inequalities imply \eqref{E2.12}.
\end{proof}

\begin{remark}
As proved in the previous lemma, we have that $z_{u,v,1} \in C(\bar Q)$ for every $u \in \Uad$ and every $v \in L^2(\Sigma_c)$. On the contrary, as shown in theorem \ref{T2.2}, regularity $z_{u,v,2} \in C(\bar Q)$ requires the assumption $v \in L^s(0,T;L^q(\Gamma_c))$.
\label{R2.1}
\end{remark}

\section{Analysis of the control problem \Pb}
\label{S3}
\setcounter{equation}{0}

In this section, we establish the existence of a solution for the control \Pb and derive the first-order optimality conditions satisfied by a local minimizer. For this purpose we make the following assumption on the cost functional $J$.

\begin{assumption}
We assume that $L:Q \times \mathbb{R}^2 \longrightarrow \mathbb{R}$ and $l:\Omega \times \mathbb{R}^2 \longrightarrow \mathbb{R}$ are Carath\'eodory functions of class $C^2$ with respect to the last variable. Moreover, we suppose that the following conditions hold for almost all $(x,t) \in Q$:
\begin{align}
&L(\cdot,\cdot,0) \in L^1(Q)\ \text{ and }\ l(\cdot,0) \in L^1(\Omega),\notag\\
&\left\{\begin{array}{l}\forall M > 0 \ \exists \psi_M \in L^r(0,T;L^p(\Omega)) \text{ and } C_M > 0 \text{ such that for $|y| \le M$}\\
\displaystyle\Big|\frac{\partial L}{\partial y}(x,t,y)\Big| \le \psi_M(x,t) \text{ and } \sum_{j = 1}^2\Big|\frac{\partial^j l}{\partial y^j}(x,y)\Big| + \Big|\frac{\partial^2L}{\partial y^2}(x,t,y)\Big| \le C_M,\end{array}\right.\label{E3.1}\\
&\left\{\begin{array}{l}\forall\varepsilon > 0 \text{ and } \forall M > 0 \ \ \exists \rho_{\varepsilon,M} > 0 : \text{ if } |y_i| \le M \text{ and } |y_2 - y_1|< \rho_{\varepsilon,M}\\\displaystyle
\Big|\frac{\partial^2l}{\partial y^2}(x,y_2) - \frac{\partial^2l}{\partial y^2}(x,y_1)\Big| + \Big|\frac{\partial^2L}{\partial y^2}(x,t,y_2) - \frac{\partial^2L}{\partial y^2}(x,t,y_1)\Big| < \varepsilon.\end{array}\right.\label{E3.2}
\end{align}
We also assume that $\ymin{i} < y_{0,i}(x) < \ymax{i}$ for every $x \in \bar\Omega$ for $i = 1, 2$.
\label{A3.1}
\end{assumption}

Next we deduce the following differentiability property of $J$.

\begin{theorem}
The function $J{:}L^s(0,T;L^q(\Gamma_c)) \longrightarrow \mathbb{R}$ is of class $C^2$ and we have
\begin{align}
&J'(u)v = \int_{\Sigma_c}(\varphi_{u,2} + \nu u)v\dx\dt, \label{E3.3}\\
&J''(u)v^2 = \nu \int_{\Sigma_c}v^2\dx\dt + \int_\Omega\frac{\partial^2l}{\partial y^2}(x,y_u(T))z_{u,v}^2\dx\label{E3.4}\\
&+\int_Q\Big[\frac{\partial^2L}{\partial y^2}(x,t,y_u)z^2_{u,v} {+} 2\gamma(\varphi_{u,2} {-} \varphi_{u,1})(2y_{u,2}z_{u,v,1}z_{u,v,2} {+} y_{u,1}z_{u,v,2}^2)\Big]\dx\dt,\notag
\end{align}
where $z_{u,v} = DG(u)v$ and $\varphi_u \in [W(0,T) \cap L^\infty(Q)]^2$ is the solution of the system
\begin{align}
&\left\{\begin{array}{l}\displaystyle -\frac{\partial\varphi_1}{\partial t} - d_1\Delta\varphi_1 - \dive{(V_1\varphi_1)}\\\phantom{-\frac{\partial\varphi_1}{\partial t}}\displaystyle + (\gamma y_{u,2}^2 + a_1)\varphi_1 + (b_1 - \gamma y_{u,2}^2)\varphi_2 = \frac{\partial L}{\partial y_1}(x,t,y_u)\ \text{ in } Q,\\\displaystyle
d_1\partial_n\varphi_1 + \lambda_1\varphi_1 = 0 \ \text{ on } \Sigma,\quad \varphi_1(T) = \frac{\partial l}{\partial y_1}(x,y_u(T))\ \text{ in } \Omega,\end{array}\right.\label{E3.5}\\
&\left\{\begin{array}{l}\displaystyle -\frac{\partial\varphi_2}{\partial t} - d_2\Delta\varphi_2 - \dive{(V_2\varphi_2)}\\
\phantom{-\frac{\partial\varphi_1}{\partial t}}\displaystyle + (b_2 -2\gamma y_{u,1}y_{u,2})\varphi_2 + (a_2 + 2\gamma y_{u,1}y_{u,2})\varphi_1 = \frac{\partial L}{\partial y_2}(x,t,y_u)\ \text{ in } Q,\\\displaystyle
d_2\partial_n\varphi_2 + \lambda_2\varphi_2 = 0 \ \text{ on } \Sigma,\quad \varphi_2(T) = \frac{\partial l}{\partial y_2}(x,y_u(T))\ \text{ in } \Omega.\end{array}\right.\label{E3.6}
\end{align}
\label{T3.1}
\end{theorem}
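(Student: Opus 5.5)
We write $J(u)=\mathcal{J}(G(u))+\frac{\nu}{2}\|u\|^2_{L^2(\Sigma_c)}$, where
\[
\mathcal{J}(y)=\int_Q L(x,t,y)\dx\dt+\int_\Omega l(x,y(T))\dx ,
\]
and apply the chain rule. By Theorem \ref{T2.2}, $G:L^s(0,T;L^q(\Gamma_c))\to[W(0,T)\cap C(\bar Q)]^2$ is $C^\infty$. So it suffices to show that $\mathcal{J}$ is of class $C^2$ on $C(\bar Q)^2$. Note that $y\mapsto y(T)$ is continuous linear from $C(\bar Q)^2$ to $C(\bar\Omega)^2$. The $C^2$ property is the standard Nemytskii argument in $L^\infty$. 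By \eqref{E3.1}, on bounded sets of $C(\bar Q)^2$ the first derivatives are dominated by $\psi_M\in L^r(0,T;L^p(\Omega))\subset L^1(Q)$ and by a constant $C_M$, and the second derivatives by $C_M$. By \eqref{E3.2}, the second derivatives are uniformly continuous in $y$ on bounded sets. A Taylor expansion with integral remainder then gives Fréchet differentiability twice, with
\[
\mathcal{J}'(y)z=\int_Q\frac{\partial L}{\partial y}z+\int_\Omega\frac{\partial l}{\partial y}z(T),
\qquad
\mathcal{J}''(y)z^2=\int_Q\frac{\partial^2L}{\partial y^2}z^2+\int_\Omega\frac{\partial^2l}{\partial y^2}z(T)^2 ,
\]
and continuity of $\mathcal{J}''$ again follows from \eqref{E3.2}. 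The quadratic term in $u$ is trivially $C^\infty$ on $L^s(0,T;L^q)\subset L^2(\Sigma_c)$, since $s,q\ge2$. Hence $J$ is $C^2$, and
\[
J'(u)v=\mathcal{J}'(y_u)z_{u,v}+\nu\int_{\Sigma_c}uv,
\qquad
J''(u)v^2=\mathcal{J}''(y_u)z_{u,v}^2+\mathcal{J}'(y_u)z_{u,v,v}+\nu\|v\|^2_{L^2(\Sigma_c)} .
\]

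Next comes the adjoint state. System \eqref{E3.5}--\eqref{E3.6} is linear and backward in time, with bounded coefficients since $y_u\in C(\bar Q)^2$. Its right-hand side lies in $L^r(0,T;L^p(\Omega))$ and its terminal data in $L^\infty(\Omega)$, using \eqref{E3.1} with $M=\|y_u\|_{C(\bar Q)}$. The convection enters in divergence form, so the weak formulation is $\int_\Omega V_i\varphi_i\cdot\nabla w$. No boundary term arises from it because $V_i\cdot\mathbf n=0$. After the change of variable $t\mapsto T-t$, the references \cite[Chapter VII]{Lad-Sol-Ura68} and \cite[\S III.2]{Showalter1997} give a unique $\varphi_u\in W(0,T)^2$, and \cite[\S III.7]{Lad-Sol-Ura68} gives $\varphi_u\in L^\infty(Q)^2$. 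The condition $\frac1r+\frac n{2p}<1$ is exactly what is needed. One should check that the $\dive V_i\in L^\infty$ term causes no trouble in the $L^\infty$ estimate; if needed, expand $\dive(V_i\varphi_i)=V_i\cdot\nabla\varphi_i+(\dive V_i)\varphi_i$.

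The main step is the duality identity. Take a pair $z$ solving the linearized system with right-hand sides $(h_1,h_2)$ in $Q$, Neumann data $(0,k)$ on $\Sigma$, and $z(0)=0$. Test the equation for $z_i$ with $\varphi_i$ and the adjoint equation for $\varphi_i$ with $z_i$. Integrate by parts in time, which is legitimate in $W(0,T)$, and in space. The coupling coefficients are arranged as transposes: $(a_2+2\gamma y_1y_2)$ multiplies $z_2$ in the first equation and $\varphi_1$ in the second adjoint equation, and $(b_1-\gamma y_2^2)$ multiplies $z_1$ in the second equation and $\varphi_2$ in the first adjoint equation. So they cancel and we obtain
\[
\int_Q\frac{\partial L}{\partial y}\cdot z+\int_\Omega\frac{\partial l}{\partial y}(y_u(T))\cdot z(T)=\int_Q(h_1\varphi_1+h_2\varphi_2)+\int_\Sigma k\varphi_2 .
\]
The convection terms match by the identity $\int_\Omega (V\cdot\nabla z)\varphi=-\int_\Omega z\,\dive(V\varphi)$, valid because $V\cdot\mathbf n=0$.

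Now apply this identity twice. With $z=z_{u,v}$, where $h=0$ and $k=v\chi_{\Sigma_c}$, it yields \eqref{E3.3}. With $z=z_{u,v,v}$, where $k=0$ and, by \eqref{E2.8}--\eqref{E2.9},
\[
h=\bigl(-(4\gamma y_{u,2}z_1z_2+2\gamma y_{u,1}z_2^2),\ +(4\gamma y_{u,2}z_1z_2+2\gamma y_{u,1}z_2^2)\bigr),
\]
it yields
\[
\mathcal{J}'(y_u)z_{u,v,v}=\int_Q2\gamma(\varphi_{u,2}-\varphi_{u,1})(2y_{u,2}z_{u,v,1}z_{u,v,2}+y_{u,1}z_{u,v,2}^2).
\]
Substituting this into the expression for $J''(u)v^2$ gives \eqref{E3.4}.

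The main obstacle is the careful justification of these integrations by parts at the weak level. In particular, one must handle the convection and divergence terms with only $L^\infty$ vector fields, and ensure the time-boundary terms are well defined, with $z(T)\in L^2(\Omega)$ and $\varphi(T)\in L^\infty(\Omega)$. I would first prove the identity for smooth data by density, then pass to the limit using the $W(0,T)$ estimates.
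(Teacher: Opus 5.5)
Your proposal is correct and follows essentially the same route as the paper. The paper uses the chain rule through the $C^\infty$ map $G$ of Theorem \ref{T2.2}, obtains existence and $L^\infty$ regularity of $\varphi_u$ from \cite[Chapter III-\S 7]{Lad-Sol-Ura68} and \eqref{E3.1}, and applies integration by parts to $z_{u,v}$ and $D^2G(u)v^2$ via \eqref{E2.6}--\eqref{E2.9}; your sign bookkeeping for the sources $h$ reproduces \eqref{E3.4} exactly. The step you flag as the main obstacle needs no density argument: both $z$ and $\varphi_u$ lie in $W(0,T)$, so testing each weak formulation with the other solution and using the integration-by-parts formula in $W(0,T)$ gives the identity directly. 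Smoothing the data would not make the solutions smooth anyway, since the coefficients are only bounded.
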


In the theorem above, we have used the notation
\begin{align*}
&\frac{\partial^2L}{\partial y^2}(x,t,y_u)z^2_{u,v}\\
&= \frac{\partial^2L}{\partial y_1^2}(x,t,y_u)z^2_{u,v,1} + 2\frac{\partial^2L}{\partial y_1\partial y_2}(x,t,y_u)z_{u,v,1}z_{u,v,2} + \frac{\partial^2L}{\partial y_2^2}(x,t,y_u)z^2_{u,v,2}.
\end{align*}
The same notation is used for the expression $\frac{\partial^2l}{\partial y^2}(x,y_u(T))z_{u,v}^2$.

The proof of existence and uniqueness of a solution in $W(0,T)$ to the system \eqref{E3.5}--\eqref{E3.6} is the same as that for the system \eqref{E2.6}--\eqref{E2.7}. Indeed, one system is the adjoint of the other. The regularity in $L^\infty(Q)^2$ follows from \cite[Chapter III-\S 7]{Lad-Sol-Ura68} and the assumption \eqref{E3.1}. The formulas \eqref{E3.3} and \eqref{E3.4} follow from \eqref{E2.6}--\eqref{E2.7} and \eqref{E2.8}--\eqref{E2.9}, the chain rule, \eqref{E3.5}--\eqref{E3.6}, and integration by parts. Observe that, because of the terminal term in the functional $J$, it is not possible to deduce $C(\bar Q)$-regularity for $\varphi_u$.

\begin{theorem}
The problem \Pb has at least one solution.
\label{T3.2}
\end{theorem}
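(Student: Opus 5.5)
We use the direct method of the calculus of variations. Since $\Aad$ is assumed nonempty, set $j = \inf_{u \in \Aad} J(u)$. Because $\Uad$ is bounded in $L^\infty(\Sigma_c)$, the estimate \eqref{E2.2} of Theorem \ref{T2.1} gives a constant $M$ with $\|y_u\|_{C(\bar Q)} \le M$ for every $u \in \Uad$. The assumptions \eqref{E3.1} together with $L(\cdot,\cdot,0) \in L^1(Q)$ and $l(\cdot,0)\in L^1(\Omega)$ then show, via the mean value theorem, that $|L(x,t,y_u)| \le |L(x,t,0)| + \psi_M(x,t)\,|y_u|$ and $|l(x,y_u(T))| \le |l(x,0)| + C_M|y_u(T)|$. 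Hence $J$ is bounded from below on $\Uad$, so $j > -\infty$. We then take a minimizing sequence $\{u_k\} \subset \Aad$ with $J(u_k) \to j$.

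Next we extract a limit. The set $\Uad$ is bounded, convex and closed in $L^\sigma(\Sigma_c)$ for every $\sigma<\infty$, in particular in $L^s(0,T;L^q(\Gamma_c))$. Hence a subsequence satisfies $u_k \rightharpoonup \bar u$ weakly in $L^s(0,T;L^q(\Gamma_c))$, and $\bar u \in \Uad$ because $\Uad$ is weakly closed by Mazur's lemma. We could also argue with weak$^*$ convergence in $L^\infty(\Sigma_c)$, which implies the weak convergence above. By the last statement of Theorem \ref{T2.1}, $y_{u_k} \to y_{\bar u}$ in $C(\bar Q)^2$.

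This uniform convergence closes the state constraints. Indeed, $\K$ is closed in $C(\bar Q)^2$ because its defining inequalities are pointwise and non-strict. Since $y_{u_k}\in\K$ for every $k$, we get $y_{\bar u} \in \K$, and therefore $\bar u \in \Aad$.

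For lower semicontinuity, uniform convergence with $|y_{u_k}|\le M$ and the bounds above allow dominated convergence. Dominating functions are $|L(\cdot,\cdot,0)| + M\psi_M \in L^1(Q)$ for the distributed term and $|l(\cdot,0)|+C_M M$ for the terminal term. Hence $\int_Q L(x,t,y_{u_k}) \to \int_Q L(x,t,y_{\bar u})$, and similarly for the terminal term, since $y_{u_k}(T) \to y_{\bar u}(T)$ uniformly. The Tikhonov term $\frac{\nu}{2}\|u\|^2_{L^2(\Sigma_c)}$ is convex and continuous, hence weakly lower semicontinuous, and $u_k \rightharpoonup \bar u$ in $L^2(\Sigma_c)$ as well. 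Therefore $J(\bar u) \le \liminf J(u_k) = j$, so $\bar u$ solves \Pb.

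The only genuinely nontrivial ingredient is the compactness step: passing from weak convergence of the controls to strong $C(\bar Q)$ convergence of the states. This is exactly what is needed both to preserve the pointwise state constraints and to pass to the limit in the nonlinear integrands. We take it from Theorem \ref{T2.1}, so in this proof the remaining care is in justifying the integrable dominating functions through \eqref{E3.1}.
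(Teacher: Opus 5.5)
Your proposal is correct and follows essentially the same direct-method argument as the paper: a minimizing sequence in $\Aad$ converging weakly (the paper uses weak$^*$ convergence in $L^\infty(\Sigma_c)$), strong $C(\bar Q)^2$ convergence of the states from Theorem \ref{T2.1} to keep the limit in $\Aad$ and to pass to the limit by dominated convergence via \eqref{E3.1}, and weak lower semicontinuity of the Tikhonov term. The only difference is that you spell out details the paper leaves implicit, namely the uniform bound on $y_u$ over $\Uad$ from \eqref{E2.2}, the lower bound on $J$, and the closedness of $\K$ in $C(\bar Q)^2$.
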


\begin{proof}
It was assumed that $\Aad \neq \emptyset$. Moreover, $\Aad$ is bounded in $L^\infty(\Sigma_c)$. From Theorem \ref{T2.1} we also deduce that $\Aad$ is closed in $L^\infty(\Sigma_c)$. Then, there exists a minimizing sequence $\{u_k\}_{k = 1}^\infty$ such that $u_k \stackrel{*}{\rightharpoonup} \bar u$ in $L^\infty(\Sigma_c)$ with $\bar u \in \Aad$. Since $y_{u_k} \to y_{\bar u}$ in $C(\bar Q)^2$, we deduce the existence of $M > 0$ such that $\|y_{u_k}\|_{C(\bar Q)^2} \le M$ for every $k \ge 1$. From Assumption \ref{A3.1} and the mean value theorem we find that
\[
|L(x,t,y_{u_k}(x,t))| \le |L(x,t,0)| + \psi_M(x,t)M\text{ and } |l(x,y_u(x,T))| \le |l(x,0)| + C_MM.
\]
Then, we deduce with Lebesgue's dominated convergence theorem and the lower semicontinuity of the mapping $u \to \|u\|_{L^2(\Sigma_c)^2}$ that $J(\bar u) \le \liminf_{k \to \infty}J(u_k)$. Consequently, $\bar u$ is a solution of \Pb.
\end{proof}

In the sequel, we say that $\bar u$ is a local minimizer or local solution of \Pb if there exists $\varepsilon > 0$ such that $J(\bar u) \le J(u)$ for every $u \in \Aad \cap B_\varepsilon(\bar u)$, where $B_\varepsilon(\bar u)$ denotes the closed ball of $L^2(\Sigma_c)$ centered at $\bar u$ and radius $\varepsilon$. If the inequality is strict for $u \neq \bar u$, we say that $\bar u$ is a strict local minimizer. The rest of this section is dedicated to the proof of the first-order optimality conditions satisfied by a local minimizer. For this purpose we use the following abstract result whose proof can be found in \cite[Theorem 5.2]{Casas1993}.

\begin{theorem}
Let $U$ and $Z$ be two Banach spaces, and $K \subset U$ and $C \subset Z$ two convex sets, $C$ having nonempty interior. Let $\bar u$ be a solution of the optimization problem
\[
(\mathcal{P})\quad \min_{u \in K,\ \mG(u) \in C} \mJ(u),
\]
where $\mJ:U \longrightarrow (-\infty,+\infty]$ and $\mG:U \longrightarrow Z$ are two G\^ateaux differentiable mappings at $\bar u$. If the following linearized Slater condition
\[
\exists u_0 \in K \text{ such that } \mG(\bar u) + D\mG(\bar u)(u_0 - \bar u) \in \text{int}(C)
\]
holds, then there exists $\bar\mu \in Z^*$ such that
\begin{align*}
&\langle\bar\mu,z - \mG(\bar u)\rangle \le 0\quad \forall z \in C,\\
&\langle \mJ'(\bar u) + [D\mG(\bar u)]^*\bar\mu,u - \bar u\rangle \ge 0\quad \forall u \in K.
\end{align*}
\label{T3.3}
\end{theorem}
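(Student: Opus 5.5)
The statement is the abstract Lagrange multiplier rule of Theorem \ref{T3.3}, which the paper quotes from \cite[Theorem 5.2]{Casas1993}; still, I would reprove it by a standard separation argument. Throughout I take $\bar u$ to be a global solution of $(\mathcal{P})$. The key object is the set
\[
\mathcal{A} = \{(\mJ'(\bar u)(u-\bar u) + \rho,\ \mG(\bar u) + D\mG(\bar u)(u-\bar u) - z) : u \in K,\ z \in \text{int}(C),\ \rho > 0\} \subset \mathbb{R}\times Z.
\]
It is convex because $K$ and $\text{int}(C)$ are convex and the maps involved are affine in $u$. If I show that $(0,0)\notin\mathcal{A}$, Hahn--Banach gives $(\alpha,\bar\mu)\in\mathbb{R}\times Z^*$, not both zero, with $\alpha a + \langle\bar\mu,w\rangle \ge 0$ for all $(a,w)\in\mathcal{A}$. (Open-set separation is available since $\mathcal{A}$ has nonempty interior, being a sum with $\text{int}(C)$ and an open half-line.)

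\textbf{Step 1: $(0,0)\notin\mathcal{A}$.} Suppose instead that there are $u\in K$, $z\in\text{int}(C)$ and $\rho>0$ with $\mJ'(\bar u)(u-\bar u)<0$ and $\mG(\bar u)+D\mG(\bar u)(u-\bar u)=z\in\text{int}(C)$. Set $u_\theta=\bar u+\theta(u-\bar u)\in K$. By Gâteaux differentiability,
\[
\mG(u_\theta)=(1-\theta)\mG(\bar u)+\theta z+o(\theta).
\]
Since $\mG(\bar u)\in C$ and $z\in\text{int}(C)$, the point $(1-\theta)\mG(\bar u)+\theta z$ contains a ball of radius $\theta r$ around it inside $C$, where $r$ is the radius of a ball $B(z,r)\subset C$. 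The $o(\theta)$ error is smaller than $\theta r$ for small $\theta$, so $\mG(u_\theta)\in C$. On the other hand $\mJ(u_\theta)<\mJ(\bar u)$ for small $\theta$, which contradicts optimality. I expect this to be the main technical point: the convexity/ball argument that absorbs the $o(\theta)$ remainder. Note that it only needs Gâteaux differentiability along the single direction $u-\bar u$, which is what we have.

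\textbf{Step 2: signs and normalization.} Letting $\rho\to\infty$ forces $\alpha\ge0$. Letting $z$ range over $\text{int}(C)$, whose closure is $\bar C \supset C$ since $C$ is convex with nonempty interior, and taking $u=\bar u$, $\rho\to0$, gives $\langle\bar\mu,\mG(\bar u)-z\rangle\ge0$, i.e.\ $\langle\bar\mu,z-\mG(\bar u)\rangle\le0$ for all $z\in C$. Next I would show that $\alpha\neq0$ using the linearized Slater condition. If $\alpha=0$, take $u=u_0$; then $\langle\bar\mu,\mG(\bar u)+D\mG(\bar u)(u_0-\bar u)-z\rangle\ge0$ for all $z\in\text{int}(C)$. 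Since $z_0:=\mG(\bar u)+D\mG(\bar u)(u_0-\bar u)$ is interior, choosing $z=z_0-\varepsilon h$ gives $\langle\bar\mu,h\rangle\ge0$ for every $h$, hence $\bar\mu=0$, contradicting $(\alpha,\bar\mu)\neq0$. Dividing by $\alpha$ lets me assume $\alpha=1$.

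\textbf{Step 3: the variational inequality.} With $\alpha=1$, for any $u\in K$ set $z\to\mG(\bar u)\in\bar C$ (through interior points, which is possible since $C$ is convex with nonempty interior) and $\rho\to0$. This yields
\[
\mJ'(\bar u)(u-\bar u)+\langle\bar\mu,D\mG(\bar u)(u-\bar u)\rangle\ge0,
\]
which is the second claimed inequality, written as $\langle\mJ'(\bar u)+[D\mG(\bar u)]^*\bar\mu,u-\bar u\rangle\ge0$.

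One caveat: if $\mJ$ takes the value $+\infty$, I assume $\mJ(\bar u)$ is finite, as Gâteaux differentiability at $\bar u$ implicitly requires.
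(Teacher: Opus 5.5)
Your proof is correct and complete. The set $\mathcal{A}$ is open and convex, Step 1 correctly uses the ball of radius $\theta r$ around $(1-\theta)\mG(\bar u)+\theta z$ to absorb the $o(\theta)$ remainder, and the linearized Slater condition rules out $\alpha=0$ exactly as you describe. The paper gives no proof of its own and only cites Casas (1993, Theorem 5.2); your Hahn--Banach separation, which produces a Fritz John pair $(\alpha,\bar\mu)$ and then uses the linearized Slater condition to normalize $\alpha=1$, is the standard proof of that result.
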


Before applying this theorem, we introduce some notation. We denote by $\M(\bar Q)$ the Banach space of real and regular Borel measures on $\bar Q$, which is identified with the dual of $C(\bar Q)$. Moreover, we have
\[
\|\mu\|_{\M(\bar Q)} =\sup_{\|y\|_{C(\bar Q)} \le 1}\int_{\bar Q}y\dmu = |\mu|(\bar Q),
\]
where $|\mu|$ stands for the total variation measure associated with $\mu$. If we consider the Jordan decomposition of $\mu = \mu^+ - \mu^-$, then we know that $|\mu| = \mu^+ + \mu^-$. We refer the reader to \cite[Chapter 6]{Rudin1987} for these issues.

\begin{corollary}
Let $\bar u$ be a local solution of \Pb with associated state $\bar y$. Assume that the following Slater condition holds:
\begin{equation}
\exists u_0 \in \Uad : \ymin{i} < (\bar y_i+ z_{\bar u,u_0 - \bar u,i})(x,t) < \ymax{i},\, i = 1, 2, \ \forall (x,t) \in \bar Q,
\label{E3.7}
\end{equation}
where $z_{\bar u,u_0 - \bar u} = DG(\bar u)(u_0 - \bar u)$. Then, there exists $\bar\mu \in \M(\bar Q)^2$ such that
\begin{align}
&\int_{\bar Q}(y - \bar y)\dbmu = \sum_{i = 1}^2\int_{\bar Q}(y_i - \bar y_i)\dbmu_i \le 0\quad \forall y \in \K, \label{E3.8}\\
&J'(\bar u)(u - \bar u) + \int_{\bar Q}z_{\bar u,u - \bar u}\dbmu \ge 0\quad \forall u \in \Uad.\label{E3.9}
\end{align}
\label{C3.1}
\end{corollary}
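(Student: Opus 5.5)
We apply Theorem \ref{T3.3} with $U = L^\infty(\Sigma_c)$ (or $L^s(0,T;L^q(\Gamma_c))$ with $s,q$ large, which contains $L^\infty(\Sigma_c)$ continuously), $K = \Uad$, $Z = C(\bar Q)^2$, $C = \K$, $\mJ = J$ and $\mG = G$. The set $\K$ is a convex subset of $C(\bar Q)^2$ with nonempty interior: since $\ymin{i} < \ymax{i}$, the constant function $y_i \equiv (\ymin{i}+\ymax{i})/2$ (or any finite constant between them when a bound is infinite) lies in its interior. By Theorems \ref{T2.2} and \ref{T3.1}, $G$ and $J$ are of class $C^\infty$ and $C^2$ on $L^s(0,T;L^q(\Gamma_c))$. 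Since $L^\infty(\Sigma_c)\hookrightarrow L^s(0,T;L^q(\Gamma_c))$ continuously, both are Fréchet, hence Gâteaux, differentiable from $U$. The Slater condition \eqref{E3.7} states that $\bar y + DG(\bar u)(u_0-\bar u)$ satisfies strict inequalities pointwise on the compact set $\bar Q$. By continuity and compactness the strict inequalities hold with a uniform margin $\delta>0$, so this function belongs to $\text{int}(\K)$ in $C(\bar Q)^2$. This is exactly the linearized Slater condition.

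The one point requiring care is that $\bar u$ is only a local solution in the $L^2(\Sigma_c)$ sense, whereas Theorem \ref{T3.3} is stated for global solutions. I will reduce to the global case by adding the constraint $u \in B_\varepsilon(\bar u)$ to the control set, that is, by taking $K = \Uad\cap B_\varepsilon(\bar u)$, which is convex. Then $\bar u$ is a global solution of the restricted problem. For the Slater point I replace $u_0$ by $u_\theta = \bar u + \theta(u_0-\bar u)$ with $\theta\in(0,1]$ small, so that $u_\theta\in K$. Since $DG(\bar u)$ is linear,
\[
\bar y + DG(\bar u)(u_\theta - \bar u) = (1-\theta)\bar y + \theta\big(\bar y + DG(\bar u)(u_0-\bar u)\big).
\]
This is a convex combination of $\bar y\in\K$ and an interior point of $\K$, hence it lies in $\text{int}(\K)$ for every $\theta\in(0,1]$.

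Theorem \ref{T3.3} then gives $\bar\mu\in (C(\bar Q)^2)^* = \M(\bar Q)^2$, where the identification follows from the Riesz theorem cited above. The first conclusion of the theorem is \eqref{E3.8}. The second conclusion reads $J'(\bar u)(u-\bar u) + \int_{\bar Q} z_{\bar u,u-\bar u}\dbmu \ge 0$ for all $u\in \Uad\cap B_\varepsilon(\bar u)$. To obtain it for all $u\in\Uad$, I take any $u\in\Uad$ and test with $\bar u + \rho(u-\bar u)$ for $\rho>0$ small, which lies in $K$ by convexity. The left-hand side is linear in $u-\bar u$, so dividing by $\rho$ yields \eqref{E3.9}.

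The main obstacle is this localization step, together with making sure the differentiability used matches the space $U$. Both are handled by the embedding of $L^\infty(\Sigma_c)$ into $L^s(0,T;L^q(\Gamma_c))$ and by the convex-combination argument above.
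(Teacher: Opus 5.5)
Your proof is correct. It applies Theorem \ref{T3.3} with the same data as the paper ($U = L^\infty(\Sigma_c)$, $Z = C(\bar Q)^2$, $C = \K$, $\mG = G$); the only difference is how the localization is handled.

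The paper keeps $K = \Uad$ and puts the localization into the cost instead. It takes $\mJ = J + I_{B_\varepsilon(\bar u)}$, which Theorem \ref{T3.3} allows since it admits costs with values in $(-\infty,+\infty]$. This $\mJ$ is still G\^ateaux differentiable at $\bar u$ with derivative $J'(\bar u)$. The reason is that the closed $L^2(\Sigma_c)$-ball $B_\varepsilon(\bar u)$ contains an $L^\infty(\Sigma_c)$-neighbourhood of $\bar u$, because $\|v\|_{L^2(\Sigma_c)} \le |\Sigma_c|^{1/2}\|v\|_{L^\infty(\Sigma_c)}$, so $\mJ = J$ near $\bar u$ in $U$. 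With that choice the original $u_0$ is directly a valid Slater point, and the variational inequality comes out on all of $\Uad$ at once.

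You instead shrink $K$ to $\Uad \cap B_\varepsilon(\bar u)$. This avoids extended-valued functionals and the neighbourhood observation, at the price of two extra steps, both carried out correctly. First, you scale the Slater point to $u_\theta$, using $\bar y \in \K$ and the convexity of $\K$. Second, you recover \eqref{E3.9} on all of $\Uad$ by positive homogeneity. Both routes are sound; the paper's is simply shorter.
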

The result follows directly from  Theorem \ref{T3.3} by taking $U = L^\infty(\Sigma_c)$, $Z = C(\bar Q)^2$, $K = \Uad$, $C = \K$, $\mG = G$, and $\mJ(u) = J(u) + I_{ B_{\varepsilon}(\bar u)}(u)$, and $I_{B_{\varepsilon}(\bar u)}$ is the indicator function of the closed ball $B_{\varepsilon}(\bar u)$ of $L^2(\Sigma_c)$: $I_{B_{\varepsilon}(\bar u)}(u) = 0$ if $u \in B_{\varepsilon}(\bar u)$ and $+\infty$ otherwise.
\begin{remark}
If we consider the Jordan decomposition of $\bar\mu = \bar\mu^+ - \bar\mu^-$, we infer from \eqref{E3.8} that
\begin{equation}
\left\{\begin{array}{l}\supp{\bar\mu_i^+} \subset \{(x,t) \in \bar Q : \bar y_i(x,t) = \ymax{i}\},\\\supp{\bar\mu_i^-} \subset \{(x,t) \in \bar Q : \bar y_i(x,t) = \ymin{i}\},\end{array}\right. i = 1, 2;
\label{E3.10}
\end{equation}
see, for instance, \cite[Proposition 2.5]{CMV2014}. Therefore, from Assumption \ref{A3.1} about $y_0$ we infer the existence of $t_0 \in (0,T]$ such that $\ymin{i} < \bar y_i(x,t) < \ymax{i}$ for every $(x,t) \in \bar\Omega \times [0,t_0)$, $i = 1, 2$. Consequently, we have that $\supp{\bar\mu_i} \subset \bar\Omega \times [t_0,T]$, $i = 1, 2$.
\label{R3.1}
\end{remark}

Associated with a measure $\mu \in \M(\bar Q)^2$ with support in $\bar\Omega \times (0,T]$, we introduce the following adjoint state equation
\begin{align}
&\left\{\begin{array}{l}\displaystyle -\frac{\partial\varphi_1}{\partial t} - d_1\Delta\varphi_1 - \dive{(V_1\varphi_1)}\\\phantom{-\frac{\partial\varphi_1}{\partial t}}\displaystyle + (\gamma y_{u,2}^2 + a_1)\varphi_1 + (b_1 - \gamma y_{u,2}^2)\varphi_2 = \mu_{1,Q}\ \text{ in } Q,\\
d_1\partial_n\varphi_1 + \lambda_1\varphi_1 = \mu_{1,\Sigma} \ \text{ on } \Sigma,\quad \varphi_1(T) = \mu_{1,\bar\Omega}\ \text{ in } \bar\Omega,\end{array}\right.\label{E3.11}\\
&\left\{\begin{array}{l}\displaystyle -\frac{\partial\varphi_2}{\partial t} - d_2\Delta\varphi_2 - \dive{(V_2\varphi_2)}\\
\phantom{-\frac{\partial\varphi_1}{\partial t}}\displaystyle + (b_2 -2\gamma y_{u,1}y_{u,2})\varphi_2 + (a_2 + 2\gamma y_{u,1}y_{u,2})\varphi_1 = \mu_{2,Q} \text{ in } Q,\\
d_2\partial_n\varphi_2 + \lambda_2\varphi_2 = \mu_{2,\Sigma} \ \text{ on } \Sigma,\quad \varphi_2(T) = \mu_{2,\bar\Omega}\ \text{ in } \bar\Omega.\end{array}\right.\label{E3.12}
\end{align}
Above, we have considered the decomposition $\mu_i = \mu_{i,Q} + \mu_{i,\Sigma} + \mu_{i,\bar\Omega}$, where $\mu_{i,Q}$, $\mu_{i,\Sigma}$, and $\mu_{i,\bar\Omega}$ denote the restrictions of $\mu_i$ to $Q$, $\Sigma$, and $\bar\Omega$, $i = 1, 2$. 

\begin{definition}
Given $\sigma \in \Big(1,\frac{n}{n - 1}\Big)$, we say that $\varphi_{\mu} \in L^1(0,T;W^{1,\sigma}(\Omega))^2$ is a solution of \eqref{E3.11}--\eqref{E3.12} if
\begin{align}
&\int_Q\varphi_{\mu,1}\Big[\frac{\partial z_1}{\partial t} - d_1\Delta z_1 + V_1\cdot\nabla z_1 + (a_1 + \gamma y_{u,2}^2)z_1\label{E3.13}\\
& + (a_2 + 2\gamma y_{u,1}y_{u,2})z_2\Big]\dx\dt + \int_\Sigma(d_1\partial_n z_1 + \lambda_1z_1)\varphi_{\mu,1}\dx\dt  = \int_{\bar Q}z_1\dbmu_1,\notag\\
&\int_Q\varphi_{\mu,2}\Big[\frac{\partial z_2}{\partial t} - d_2\Delta z_2 + V_2\cdot\nabla z_2 + (b_2 - 2\gamma y_{u,1}y_{u,2})z_2\label{E3.14}\\
&+ (b_1 - \gamma y_{u,2}^2)z_1\Big]\dx\dt + \int_\Sigma(d_2\partial_n z_2 + \lambda_2z_2)\varphi_{\mu,2}\dx\dt = \int_{\bar Q} z_2 \dbmu_2,\notag
\end{align}
for every $z \in \mZ$, where
\begin{align*}
\mZ = \{z \in [W(0,T) \cap C(\bar Q)]^2 : &\frac{\partial z_i}{\partial t} - d_i\Delta z_i + V_i\cdot\nabla z_i \in L^\infty(Q),\\
&\ \partial_nz_i \in L^\infty(\Sigma),  \text{ and } z_i(0) = 0,\ i= 1, 2\}.
\end{align*}
\label{D3.1}
\end{definition}

\begin{theorem}
The system \eqref{E3.11}-\eqref{E3.12} has a unique solution $\varphi_{\mu}$. Moreover, $\varphi_{\mu} \in L^{\sigma_1}(0,T;W^{1,\sigma_2}(\Omega))^2$ for every $\sigma_1, \sigma_2 \in [1,2)$ satisfying $\frac{2}{\sigma_1} + \frac{n}{\sigma_2} > n + 1$ and there exist constants $C_{\sigma_1,\sigma_2}$ independent of $\mu$ such that the following estimates hold
\begin{equation}
\|\varphi_{\mu}\|_{L^{\sigma_1}(0,T;W^{1,\sigma_2}(\Omega))} \le C_{\sigma_1,\sigma_2}\|\mu\|_{\M(\bar Q)}.
\label{E3.15}
\end{equation}
\label{T3.4}
\end{theorem}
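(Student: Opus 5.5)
Existence will come from approximating the measure, uniqueness from a duality argument, and the estimate \eqref{E3.15} will pass to the limit by weak compactness.

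\emph{Approximation.} I regularize $\mu$. Choose $\mu^k = (\mu^k_{i,Q},\mu^k_{i,\Sigma},\mu^k_{i,\bar\Omega})$ with $\mu^k_{i,Q} \in C(\bar Q)$, $\mu^k_{i,\Sigma} \in L^\infty(\Sigma)$ and $\mu^k_{i,\bar\Omega} \in C(\bar\Omega)$. These converge weakly$^*$ to the corresponding restrictions of $\mu_i$, with $\|\mu^k\|_{L^1} \le \|\mu\|_{\M(\bar Q)}$. Since $\supp\mu \subset \bar\Omega\times(0,T]$, the data $\mu^k$ can be taken supported in $\bar\Omega\times[t_0/2,T]$. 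For each $k$ the linear system \eqref{E3.11}--\eqref{E3.12} with data $\mu^k$ has a unique solution $\varphi_k \in [W(0,T)\cap L^\infty(Q)]^2$. This follows exactly as for \eqref{E3.5}--\eqref{E3.6}: the coefficients $\gamma y_{u,2}^2$, $y_{u,1}y_{u,2}$, $a_i$, $b_i$ and $\dive V_i$ are bounded by Theorem \ref{T2.1}. Integrating by parts, using $V_i\cdot\mathbf n = 0$ on $\Sigma$ and $z(0)=0$, shows that $\varphi_k$ satisfies \eqref{E3.13}--\eqref{E3.14} with $\mu^k$ in place of $\mu$.

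\emph{Uniform estimate.} This is the main step. I must prove
\[
\|\varphi_k\|_{L^{\sigma_1}(0,T;W^{1,\sigma_2}(\Omega))} \le C\|\mu^k\|_{L^1},
\]
with $C$ independent of $k$. I will use the duality approach of \cite[Theorem 6.3]{Casas-1997}, already invoked in Lemma \ref{L2.2}. Take $h \in L^{\sigma_1'}(0,T;L^{\sigma_2'}(\Omega))^2$ and $H \in L^{\sigma_1'}(0,T;L^{\sigma_2'}(\Omega))^{2n}$. Let $z \in \mZ$-type solve the forward linearized system \eqref{E2.6}--\eqref{E2.7} with right-hand sides $h_i - \dive H_i$, homogeneous Neumann data and zero initial data. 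The condition $\frac{2}{\sigma_1}+\frac{n}{\sigma_2} > n+1$ is equivalent to $\frac{1}{\sigma_1'}+\frac{n}{2\sigma_2'} < \frac12$. This is precisely the integrability that makes $z \in C(\bar Q)^2$, with $\|z\|_{C(\bar Q)} \le C(\|h\|+\|H\|)$, by \cite[Chapter III-\S 7 and \S 10]{Lad-Sol-Ura68} or \cite{CasasKunisch2025}. The coupling term in $z_1$ is lower order, so the argument is the same as in Lemma \ref{L2.2}.

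The duality identity then gives
\[
\int_Q (\varphi_k\cdot h + \nabla\varphi_k\cdot H) = \int_{\bar Q} z\,d\mu^k \le \|z\|_{C(\bar Q)}\|\mu^k\|.
\]
Taking the supremum over $(h,H)$ yields the bound, because $\|\varphi\|_{W^{1,\sigma_2}}$ is controlled by pairings with $(h,H)$ in the dual space. The technical obstacle is justifying this duality when $h$ and $H$ are not smooth. I will first take smooth $h,H$ and then argue by density, since $\sigma_1', \sigma_2' < \infty$.

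\emph{Passage to the limit and uniqueness.} The sequence $\{\varphi_k\}$ is bounded in the reflexive space $L^{\sigma_1}(0,T;W^{1,\sigma_2}(\Omega))^2$, so a subsequence converges weakly to some $\varphi_\mu$. For fixed $z \in \mZ$, the bracketed terms in \eqref{E3.13}--\eqref{E3.14} lie in $L^\infty(Q)$ and the boundary terms lie in $L^\infty(\Sigma)$. The trace map is continuous from $L^{\sigma_1}(0,T;W^{1,\sigma_2})$ into $L^1(\Sigma)$, so the left-hand sides converge. The right-hand sides converge because $z \in C(\bar Q)$ and $\mu^k \stackrel{*}{\rightharpoonup} \mu$. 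In the terminal term this uses that $z(T)$ is continuous on $\bar\Omega$.

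Hence $\varphi_\mu$ is a solution, and \eqref{E3.15} follows by weak lower semicontinuity. Every $\sigma \in (1,\frac{n}{n-1})$ is admissible, for instance by choosing $\sigma_1$ close to $1$.

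For uniqueness, suppose two solutions exist and let $\psi$ be their difference; it satisfies \eqref{E3.13}--\eqref{E3.14} with zero right-hand side for all $z \in \mZ$. For any $h \in C_c^\infty(Q)^2$, the forward system with right-hand side $h$ has a solution $z \in \mZ$. Testing gives $\int_Q \psi\cdot h = 0$, so $\psi = 0$.
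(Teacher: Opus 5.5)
Your proposal is correct and is essentially the transposition argument that the paper delegates to \cite[Theorem 6.3]{Casas-1997} and \cite[Theorem 2.2]{CasasKunisch2016}: approximate $\mu$, bound $\varphi_k$ in $L^{\sigma_1}(0,T;W^{1,\sigma_2}(\Omega))^2$ by duality against forward solutions with data $h-\dive H$ (continuous precisely because $\frac{1}{\sigma_1'}+\frac{n}{2\sigma_2'}<\frac12$), pass to the weak limit, and obtain uniqueness by testing with forward solutions for smooth $h$. Three minor points: integration by parts yields only the sum of \eqref{E3.13} and \eqref{E3.14}, which is the correct transposition identity (the two identities as displayed do not hold separately in general); the density step is unnecessary, since $h,H\in L^2(Q)$ already gives $z\in W(0,T)^2$; and the non-reflexive endpoints $\sigma_1=1$ or $\sigma_2=1$ follow from nearby exponents, because the condition $\frac{2}{\sigma_1}+\frac{n}{\sigma_2}>n+1$ is strict.
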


The proof of this theorem is similar to that of Theorem 6.3 in \cite{Casas-1997} or Theorem 2.2 in \cite{CasasKunisch2016}. Now, the optimality conditions \eqref{E3.8}--\eqref{E3.9} for a local minimizer can be written as follows.

\begin{corollary}
Let $\bar u$ be a local minimizer of \Pb. Then, there exists $\bar y \in [W(0,T) \cap C(\bar Q)]^2$, $\bar\varphi \in L^{\sigma_1}(0,T;W^{1,\sigma_2}(\Omega))^2$ for every $\sigma_1, \sigma_2 \in [1,2)$ satisfying $\frac{2}{\sigma_1} + \frac{n}{\sigma_2} > n + 1$, and $\bar\mu \in \M(\bar Q)^2$ such that
\begin{align}
&\left\{
\begin{array}{l}
 \displaystyle \frac{\partial \bar y_1}{\partial t} - d_1\Delta\bar y_1 
 + V_{1}\cdot\nabla\bar y_1 + \gamma\bar y_1\bar y_2^2 + a_1\bar y_1 + a_2\bar y_2 = f_1\text{ in }Q, \\
  d_1\partial_n\bar y_1 + \lambda_1\bar y_1 = g_1\text{ on }\Sigma,\qquad \bar y_1(0) = y_{0,1}\text{ in }\Omega, 
\end{array}
\right.
\label{E3.16} \\
&\left\{
\begin{array}{l}
\displaystyle  \frac{\partial\bar y_2}{\partial t} - d_2\Delta\bar y_2 
+ V_{2}\cdot\nabla\bar y_2 - \gamma\bar y_1\bar y^2_2 + b_1\bar y_1 + b_2\bar y_2 = f_2\text{ in }Q, \\
  d_2\partial_n\bar y_2 +\lambda_2\bar y_2 = g_2 + \bar u\chi_{\Sigma_c}\text{ on }\Sigma,\qquad \bar y_2(0) = y_{0,2}\text{ in }\Omega, 
\end{array}
\right.\label{E3.17} \\
&\left\{\begin{array}{l}\displaystyle -\frac{\partial\bar\varphi_1}{\partial t} - d_1\Delta\bar\varphi_1 - \dive{(V_1\bar\varphi_1)}\\\phantom{-\frac{\partial\bar\varphi_1}{\partial t}}\displaystyle + (\gamma\bar y_2^2 + a_1)\bar\varphi_1 + (b_1 - \gamma\bar y_2^2)\bar\varphi_2 = \frac{\partial L}{\partial y_1}(x,t,\bar y) + \bar\mu_{1,Q}\ \text{ in } Q,\\\displaystyle
d_1\partial_n\bar\varphi_1 + \lambda_1\bar\varphi_1 = \bar\mu_{1,\Sigma} \ \text{ on } \Sigma,\quad \bar\varphi_1(T) = \frac{\partial l}{\partial y_1}(x,\bar y(T)) + \bar\mu_{1,\bar\Omega}\ \text{ in } \bar\Omega,\end{array}\right.\label{E3.18} \\
&\left\{\begin{array}{l}\displaystyle -\frac{\partial\bar\varphi_2}{\partial t} - d_2\Delta\bar\varphi_2 - \dive{(V_2\bar\varphi_2)}\\
\phantom{-\frac{\partial\varphi_1}{\partial t}}\displaystyle + (b_2 -2\gamma \bar y_1\bar y_2)\bar\varphi_2 + (a_2 + 2\gamma\bar y_1\bar y_2)\bar\varphi_1 = \frac{\partial L}{\partial y_2}(x,t,\bar y) + \bar\mu_{2,Q}\ \text{ in } Q,\\\displaystyle
d_2\partial_n\bar\varphi_2 + \lambda_2\bar\varphi_2 = \bar\mu_{2,\Sigma} \ \text{ on } \Sigma,\quad \bar\varphi_2(T) = \frac{\partial l}{\partial y_2}(x,\bar y(T)) + \bar\mu_{2,\bar\Omega}\ \text{ in } \bar\Omega,\end{array}\right.\label{E3.19}
\end{align}

\begin{align}
&\int_{\bar Q}(y - \bar y)\dbmu \le 0\quad \forall y \in \K, \label{E3.20}\\
&\int_{\Sigma_c}(\bar\varphi_2 + \nu\bar u)(u - \bar u)\dx\dt \ge 0\quad \forall u \in \Uad. \label{E3.21}
\end{align}
\label{C3.2}
\end{corollary}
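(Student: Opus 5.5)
The plan is to combine Corollary \ref{C3.1} with Theorem \ref{T3.4}, by splitting the adjoint state into a smooth part and a measure part. The state equations \eqref{E3.16}--\eqref{E3.17} are just the definition $\bar y = y_{\bar u}$ from Theorem \ref{T2.1}. The multiplier $\bar\mu\in\M(\bar Q)^2$ and the inequality \eqref{E3.20} come directly from Corollary \ref{C3.1}, i.e.\ \eqref{E3.8}, under the Slater condition \eqref{E3.7}; I take that condition as implicitly assumed, as in the corollary. By Remark \ref{R3.1}, $\supp\bar\mu_i\subset\bar\Omega\times[t_0,T]$. Hence $\bar\mu$ has support in $\bar\Omega\times(0,T]$, and Theorem \ref{T3.4} applies to it.

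Next I would define $\bar\varphi = \varphi_{\bar u} + \varphi_{\bar\mu}$. Here $\varphi_{\bar u}\in[W(0,T)\cap L^\infty(Q)]^2$ is the adjoint state of Theorem \ref{T3.1}, solving \eqref{E3.5}--\eqref{E3.6} with $y_u=\bar y$. The function $\varphi_{\bar\mu}$ is the solution of \eqref{E3.11}--\eqref{E3.12} with $\mu=\bar\mu$ and $y_u=\bar y$, given by Theorem \ref{T3.4}. Since $W(0,T)\subset L^2(0,T;H^1(\Omega))\subset L^{\sigma_1}(0,T;W^{1,\sigma_2}(\Omega))$ for $\sigma_1,\sigma_2<2$, the sum lies in the claimed space. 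By linearity it solves \eqref{E3.18}--\eqref{E3.19}, understood in the sense of Definition \ref{D3.1} with the right-hand sides augmented by the $\partial L/\partial y$ and $\partial l/\partial y$ terms. The key observation is that $\varphi_{\bar u}$, being a weak $W(0,T)$ solution, also satisfies the identities \eqref{E3.13}--\eqref{E3.14} against test functions $z\in\mZ$, with right-hand sides $\int_Q \frac{\partial L}{\partial y}(x,t,\bar y)z\dx\dt+\int_\Omega\frac{\partial l}{\partial y}(x,\bar y(T))z(T)\dx$. This follows by integrating by parts in time, which is legitimate because $z(0)=0$ and both functions lie in $W(0,T)$.

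It remains to rewrite \eqref{E3.9} as \eqref{E3.21}. By \eqref{E3.3}, $J'(\bar u)v=\int_{\Sigma_c}(\varphi_{\bar u,2}+\nu\bar u)v\dx\dt$ with $v=u-\bar u$. So it suffices to show
\[
\int_{\bar Q} z_{\bar u,v}\dbmu=\int_{\Sigma_c}\varphi_{\bar\mu,2}\,v\dx\dt\quad\forall v\in L^\infty(\Sigma_c).
\]
Formally, this follows by taking $z=z_{\bar u,v}$ in \eqref{E3.13}--\eqref{E3.14} and adding. The coupling terms cancel by \eqref{E2.6}--\eqref{E2.7}, which leaves only the boundary term $\int_\Sigma v\chi_{\Sigma_c}\varphi_{\bar\mu,2}$.

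The main obstacle is that $z_{\bar u,v}$ is not admissible in $\mZ$. Its components do not satisfy the individual conditions $\partial_t z_i-d_i\Delta z_i+V_i\cdot\nabla z_i\in L^\infty(Q)$ with $\partial_n z_i\in L^\infty(\Sigma)$, since they only solve the coupled system. I would handle this by noting that $z=z_{\bar u,v}\in C(\bar Q)^2$ by Theorem \ref{T2.2}, because $v\in L^\infty$. Moreover, $\partial_t z_i-d_i\Delta z_i+V_i\cdot\nabla z_i$ equals bounded coefficients times $z$, which is in $L^\infty(Q)$, and $d_2\partial_n z_2=v\chi_{\Sigma_c}-\lambda_2 z_2\in L^\infty(\Sigma)$. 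So in fact $z\in\mZ$ and the formal computation is rigorous. If one prefers to avoid checking this directly, the fallback is to approximate $v$ by smooth functions and pass to the limit. That passage uses the continuity $v\mapsto z_{\bar u,v}$ into $C(\bar Q)$ from Theorem \ref{T2.2} on the left-hand side, and the trace of $\varphi_{\bar\mu,2}\in L^{\sigma_1}(0,T;W^{1,\sigma_2}(\Omega))\subset L^1(\Sigma)$ on the right. Substituting into \eqref{E3.9} then gives \eqref{E3.21}.
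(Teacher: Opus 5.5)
Your proposal is correct and follows the paper's route: the paper notes $\bar y=G(\bar u)$, sets $\bar\varphi=\varphi_{\bar u}+\varphi_{\bar\mu}$, reads \eqref{E3.20} off \eqref{E3.8}, and gets \eqref{E3.21} from \eqref{E3.9}, \eqref{E3.3} and the weak formulation of the measure adjoint tested with $z_{\bar u,u-\bar u}$. Your check that $z_{\bar u,u-\bar u}\in\mZ$ (bounded coefficients and $v\in L^\infty(\Sigma_c)$) is exactly what justifies this test, so the announced ``obstacle'' and the approximation fallback are unnecessary, and stating the Slater condition \eqref{E3.7} explicitly is appropriate.

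One precision: in \eqref{E3.13}--\eqref{E3.14} the cross terms $(b_1-\gamma\bar y_2^2)\varphi_2z_1$ and $(a_2+2\gamma\bar y_1\bar y_2)\varphi_1z_2$ sit in the opposite rows from where testing the adjoint equations would put them. Testing \eqref{E3.14} alone with $(z_1,0)$ would force $(b_1-\gamma\bar y_2^2)\varphi_2=0$. So only the sum of the two identities is a valid weak formulation, and your claim that $\varphi_{\bar u}$ satisfies them should be read for the summed identity, which is all your argument uses.
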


We note that $\bar y = G(\bar u)$ is the state associated with $\bar u$. We also have that $\bar\varphi = \varphi_{\bar u} + \varphi_{\bar\mu}$ with $\varphi_{\bar u}$ solution of \eqref{E3.5}--\eqref{E3.6} and $\varphi_{\bar\mu}$ solution of \eqref{E3.11}--\eqref{E3.12} for $y_u = \bar y$ in both cases and $\mu = \bar\mu$. The inequality \eqref{E3.20} is the same as \eqref{E3.8}. Finally, \eqref{E3.21} follows from \eqref{E3.9}, \eqref{E3.3}, and \eqref{E3.14}.

\section{Second-order sufficient optimality conditions for a local minimizer}
\label{S4}
\setcounter{equation}{0}

In this section, $\bar u \in \Aad$ is a control satisfying the first-order optimality conditions \eqref{E3.16}--\eqref{E3.21} together with $\bar y$, $\bar\varphi$ and $\bar\mu$. The goal is to formulate a second-order condition for $\bar u$ to be a strict local minimizer of \Pb. For this purpose, we take $n + 1 < \sigma < \infty$ and define the Lagrangian function
\begin{align*}
&\mL:L^\sigma(\Sigma_c) \times \M(\bar Q)^2 \longrightarrow \mathbb{R}\\
&\mL(u,\mu) = J(u) +\int_{\bar Q}y_u\dmu.
\end{align*}
Note that $y_u \in C(\bar Q)^2$ if $u \in L^\sigma(\Sigma_c)$ for $\sigma > n + 1$. From Theorem \ref{T3.1} and \ref{T2.2} we infer that $\mL$ is of class $C^2$ and
\begin{align}
&\frac{\partial\mL}{\partial u}(u,\mu)v = \int_{\Sigma_c}(\varphi_2 + \nu u)v\dx\dt,\label{E4.1}\\
&\frac{\partial^2\mL}{\partial u^2}(u,\mu)v^2 = \nu \int_{\Sigma_c}v^2\dx\dt + \int_\Omega\frac{\partial^2l}{\partial y^2}(x,y_u(T))z_{u,v}^2(T)\dx\label{E4.2}\\
&+\int_Q\Big[\frac{\partial^2L}{\partial y^2}(x,t,y_u)z^2_{u,v} {+} 2\gamma(\varphi_2 - \varphi_1)(2y_{u,2}z_{u,v,1}z_{u,v,2} + y_{u,1}z_{u,v,2}^2)\Big]\dx\dt,\notag
\end{align}
where $\varphi = \varphi_u + \varphi_\mu$ with $\varphi_u$ and $\varphi_{\mu}$ solutions of \eqref{E3.5}--\eqref{E3.6} and \eqref{E3.11}--\eqref{E3.12}, respectively. Above we follow the notation
\begin{align*}
&\frac{\partial^2L}{\partial y^2}(x,t,y_u)z^2_{u,v}\\ &= (z_{u,v,1},z_{u,v,2})\left(\begin{array}{ll}\displaystyle \frac{\partial^2L}{\partial y_1^2}(x,t,y_u) & \displaystyle \frac{\partial^2L}{\partial y_1\partial y_2}(x,t,y_u)\vspace{2mm} \\\displaystyle \frac{\partial^2L}{\partial y_2\partial y_1}(x,t,y_u) & \displaystyle \frac{\partial^2L}{\partial y_2^2}(x,t,y_u)\end{array}\right)\left(\begin{array}{c}z_{u,v,1}\\z_{u,v,2}\end{array}\right).
\end{align*}
The same notation is used for $\frac{\partial^2l}{\partial y^2}(x,y_u(T))z^2_{u,v}(T)$. For every $\tau > 0$ we consider the cone

\begin{align*}
C^\tau_{\bar u,\bar \mu} = \Big \{&v\in L^\sigma(\Sigma_c):
\ \frac{\partial\mL}{\partial u}(\bar u,\bar\mu)v\leq \tau\|v\|_{L^\sigma(\Sigma_c)},\\
& v(x,t)\geq 0\text{ if }\bar u(x,t)= \umin,\ v(x,t)\leq 0\text{ if }\bar u(x,t) = \umax,\\
& z_{\bar u,v,i}(x,t) \le\tau\|v\|_{L^\sigma(\Sigma_c)}\text{ if }\bar y_i(x,t) = \ymax{i},\\ & z_{\bar u,v,i}(x,t) \ge -\tau\|v\|_{L^\sigma(\Sigma_c)}\text{ if }\bar y_i(x,t) = \ymin{i}, \, i = 1, 2, \text{ and }\\
& \int_{\bar Q} z_{\bar u,v} \dbmu \ge -\tau \|v\|_{L^\sigma(\Sigma_c)}\Big \}.
\end{align*}

This cone is an extension of the following one, introduced in \cite{CRT2008}:

\begin{align*}
C_{\bar u,\bar \mu} = \Big \{&v\in L^\sigma(\Sigma_c):
\ \frac{\partial\mL}{\partial u}(\bar u,\bar\mu)v=0,\\
& v(x,t)\geq 0\text{ if }\bar u(x,t)=\umin,\ v(x,t)\leq 0\text{ if }\bar u(x,t)=\umax,\\
& z_{\bar u,v,i}(x,t) \le 0\text{ if }\bar y_i(x,t) = \ymax{i},\\ &z_{\bar u,v,i}(x,t) \ge 0\text{ if }\bar y_i(x,t) = \ymin{i},\, i = 1, 2,\\ &\text{ and }  \int_{\bar Q} z_{\bar u,v}\dbmu = 0 \Big \}.
\end{align*}
Indeed, if  $(\bar u,\bar y,\bar\varphi,\bar\mu)$ satisfies the first-order optimality conditions, we have that $C_{\bar u,\bar \mu} = C^0_{\bar u,\bar \mu} \subset C^\tau_{\bar u,\bar \mu}\ \forall \tau >0$.
In  \cite{Dunn98a} it was proved that the second-order condition based on the critical cone of the type $C_{\bar u,\bar\mu}$ is not enough for a sufficient optimality condition in some cases. Hence an extension of the cone was suggested; see also \cite{Maurer-Zowe79}.

\begin{theorem}
Assume that $(\bar u,\bar y,\bar\varphi,\bar\mu)$ satisfies the first-order optimality conditions \eqref{E3.16}--\eqref{E3.21} and suppose that there exist $\tau> 0$ and $\delta >0$ such that
\begin{equation}
\frac{\partial^2\mL}{\partial u^2}(\bar u,\bar\mu)v^2\geq \delta \|v\|^2_{L^2(\Sigma_c)}\ \forall v\in C^\tau_{\bar u,\bar \mu}.
\label{E4.3}
\end{equation}
Then, there exist $\varepsilon >0$ and $\kappa >0$ such that
\begin{equation}
J(\bar u) +\frac{\kappa}{2}\|u-\bar u\|^2_{L^2(\Sigma_c)}\leq J(u)\ \forall u\in\Aad \text{ such that }\|u-\bar u\|_{L^2(\Sigma_c)} \leq \varepsilon.
\label{E4.4}
\end{equation}
\label{T4.1}
\end{theorem}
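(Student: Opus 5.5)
The argument is by contradiction, following the scheme of \cite{CRT2008} and \cite{Casas-Mateos-Rosch2025}. Suppose no $\varepsilon,\kappa$ exist. Then there is a sequence $\{u_k\}\subset\Aad$ with $\rho_k:=\|u_k-\bar u\|_{L^2(\Sigma_c)}\to 0$, $\rho_k>0$, and
\[
J(u_k)<J(\bar u)+\frac{1}{2k}\rho_k^2 .
\]
Since $\Uad$ is bounded in $L^\infty(\Sigma_c)$, interpolation gives $\|u_k-\bar u\|_{L^\sigma(\Sigma_c)}\le C\rho_k^{2/\sigma}\to0$. Set $v_k=(u_k-\bar u)/\rho_k$. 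Then $\|v_k\|_{L^2}=1$, and after extracting a subsequence $v_k\rightharpoonup v$ in $L^2(\Sigma_c)$. By Lemma \ref{L2.2}, compactness of $W(0,T)\subset L^2(Q)$, and the $C(\bar Q)$ estimate for the first component, we get $z_{\bar u,v_k}\rightharpoonup z_{\bar u,v}$ in $W(0,T)^2$, strongly in $L^2(Q)^2$, and $z_{\bar u,v_k,1}\to z_{\bar u,v,1}$ in $C(\bar Q)$ (after checking compactness of the heat-type solution map for the first equation).

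The key identity uses feasibility. Because $u_k\in\Aad$ and \eqref{E3.20} holds, $\int_{\bar Q}(y_{u_k}-\bar y)\dbmu\le0$. Hence
\[
\mL(u_k,\bar\mu)-\mL(\bar u,\bar\mu)\le J(u_k)-J(\bar u)<\frac{1}{2k}\rho_k^2 .
\]
A second-order Taylor expansion of $\mL(\cdot,\bar\mu)$ in $L^\sigma(\Sigma_c)$ gives
\[
\rho_k\frac{\partial\mL}{\partial u}(\bar u,\bar\mu)v_k+\frac{\rho_k^2}{2}\frac{\partial^2\mL}{\partial u^2}(w_k,\bar\mu)v_k^2<\frac{\rho_k^2}{2k},
\]
with $w_k$ intermediate. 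The first step is to show
\[
\Big|\frac{\partial^2\mL}{\partial u^2}(w_k,\bar\mu)v_k^2-\frac{\partial^2\mL}{\partial u^2}(\bar u,\bar\mu)v_k^2\Big|\to0 .
\]
This uses $y_{w_k}\to\bar y$ in $C(\bar Q)$ (Lemma \ref{L2.1}, third estimate), \eqref{E3.2}, the adjoint bounds \eqref{E3.15}, and the estimate $\|z_{w_k,v_k}-z_{\bar u,v_k}\|\to0$ in $L^2$ norms. The delicate term is $\int(\varphi_2-\varphi_1)y_{u,1}z_{u,v,2}^2$. Here $\varphi\in L^{\sigma_1}(0,T;W^{1,\sigma_2})$ embeds into $L^{a}(Q)$ for $a<\frac{n+2}{n}$, and $z_2\in L^{\sigma_0}$ with $\sigma_0<\frac{2(n+2)}{n-1}$. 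H\"older then requires $\frac{2}{\sigma_0}+\frac{1}{a}\le1$, i.e. roughly $\frac{n-1}{n+2}+\frac{n}{n+2}=\frac{2n-1}{n+2}<1$. This holds for $n=2$ but fails for $n=3$.

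\emph{This is the expected main obstacle.} For $n=3$ I would exploit the structure of the system: $\varphi_{\bar\mu}$ has the measure only through the constraints. Remark \ref{R3.1} puts $\supp\bar\mu$ in $\bar\Omega\times[t_0,T]$, and the term $\varphi_2-\varphi_1$ should be more regular, being the solution associated with a difference. Failing that, I would use the bound $\|z_{u,v,1}\|_{C(\bar Q)}$ for the mixed term and an improved integrability of $z_2$ away from $t=0$. I would first try to show that the measure part of $\varphi_2-\varphi_1$ enters only through $y_{u,1}$ times a bounded factor and integrate by parts in time.

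Given the convergence, I divide by $\rho_k$ and use the boundedness of the second-order term to get $\frac{\partial\mL}{\partial u}(\bar u,\bar\mu)v_k\le C\rho_k$. The sign conditions on $v_k$ at active control bounds hold exactly. For the state conditions, $y_{u_k}=\bar y+\rho_k z_{\bar u,v_k}+o(\rho_k)$ in $C(\bar Q)$ (the remainder is $o(\|u_k-\bar u\|_{L^\sigma})$, which I need to be $o(\rho_k)$; second-order estimates via Lemma \ref{L2.2} give $O(\rho_k^2)$ in $L^2$ but only $o(\|v\|_{L^\sigma})$ in $C$). Together with $y_{u_k}\in\K$, this gives $z_{\bar u,v_k,i}\le o(1)$ where $\bar y_i=\ymax{i}$, and similarly at $\ymin{i}$. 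The integral condition $\int z_{\bar u,v_k}\dbmu\ge -o(1)$ follows from \eqref{E3.9}-type reasoning and the previous inequality. Since $\|v_k\|_{L^\sigma}\ge c\,\|v_k\|_{L^2}=c>0$ while these defects vanish, for $k$ large $v_k\in C^\tau_{\bar u,\bar\mu}$, after possibly normalizing by $\|v_k\|_{L^\sigma}$ if this blows up, which makes the inequalities easier.

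Then \eqref{E4.3} gives $\frac{\partial^2\mL}{\partial u^2}(\bar u,\bar\mu)v_k^2\ge\delta$. Moreover $\frac{\partial\mL}{\partial u}(\bar u,\bar\mu)v_k\ge0$ by \eqref{E3.21}, since $u_k\in\Uad$, and the $\bar\mu$-term is handled via \eqref{E3.20}. Passing to the limit in the Taylor inequality yields $\frac{\delta}{2}\le \frac{1}{2k}+o(1)$, a contradiction.
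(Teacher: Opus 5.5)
\paragraph{The main gap.} Your scheme is the paper's: a contradiction sequence, replacing $J$ by $\mL(\cdot,\bar\mu)$ through feasibility and \eqref{E3.20}, a Taylor expansion, showing $v_k\in C^\tau_{\bar u,\bar\mu}$ for large $k$, and contradicting \eqref{E4.3}. However, the step you call the main obstacle is left open, and for $n=3$ it is the heart of the proof. You never establish
\[
\Bigl|\frac{\partial^2\mL}{\partial u^2}(w_k,\bar\mu)v_k^2-\frac{\partial^2\mL}{\partial u^2}(\bar u,\bar\mu)v_k^2\Bigr|\to0 .
\]
Your later appeal to the boundedness of $\frac{\partial^2\mL}{\partial u^2}(w_k,\bar\mu)v_k^2$ is also unsupported by your own exponent count. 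The remedies you suggest (extra regularity of $\varphi_2-\varphi_1$, integration by parts in time, better integrability of $z_2$ away from $t=0$) are not what is needed.

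The missing idea is that you never have to bound $\int_Q\bar\varphi\,y_1z_2^2$ by $\|v\|^2_{L^2(\Sigma_c)}$. Only the \emph{difference} matters, and every term of it containing $\bar\varphi$ also contains a factor that is small in $C(\bar Q)$. That factor is one of the following:
\begin{itemize}
\item $y_{w_k,1}-\bar y_1$, with $\|y_{w_k,1}-\bar y_1\|_{C(\bar Q)}\le K_\infty\|w_k-\bar u\|_{L^2(\Sigma_c)}$ by Lemma~\ref{L2.1}. This is the structural fact that the inhibitor is controlled in $C(\bar Q)$ by the $L^2$ norm of the control.
\item $y_{w_k,2}-\bar y_2$, of size $K_\sigma\|w_k-\bar u\|_{L^\sigma(\Sigma_c)}$.
\item $z_{w_k,v_k}-z_{\bar u,v_k}$, estimated in $C(\bar Q)$ through an $L^\infty(0,T;L^2(\Omega))$ bound on its right-hand side, as in \eqref{E4.10}.
\end{itemize}
This lets you place one factor $z_{\cdot,2}$ in $L^\infty(Q)$, at the price $\|v_k\|_{L^\sigma(\Sigma_c)}$ since $\sigma>n+1$. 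The other $z$-factor goes in $L^4(Q)$ by \eqref{E2.12}, and $\bar\varphi$ in $L^{4/3}(Q)$ by \eqref{E3.15}. Both exponents are admissible for $n=3$, since $4<5=\frac{2(n+2)}{n-1}$ and $\frac43<\frac53=\frac{n+2}{n}$. For example,
\[
\int_Q|\bar\varphi_2-\bar\varphi_1|\,|y_{w_k,1}-\bar y_1|\,z_{w_k,v_k,2}^2\dx\dt\le C\|w_k-\bar u\|_{L^2(\Sigma_c)}\|v_k\|_{L^\sigma(\Sigma_c)}\|v_k\|_{L^2(\Sigma_c)}.
\]
The paper normalizes with $\rho_k=\|u_k-\bar u\|_{L^\sigma(\Sigma_c)}$, so that $\|v_k\|_{L^\sigma(\Sigma_c)}=1$ and all these terms are bounded by $C\rho_k\|v_k\|^2_{L^2(\Sigma_c)}$. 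With your $L^2$ normalization the same estimates still go through, because $\rho_k\|v_k\|_{L^\sigma(\Sigma_c)}=\|u_k-\bar u\|_{L^\sigma(\Sigma_c)}\to0$. The same bookkeeping gives
\[
\rho_k\Bigl|\frac{\partial^2\mL}{\partial u^2}(w_k,\bar\mu)v_k^2\Bigr|\le C\bigl(\rho_k+\|u_k-\bar u\|_{L^\sigma(\Sigma_c)}\bigr)\to0,
\]
which is all the first cone condition requires; boundedness is not needed.

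\paragraph{Two smaller points.} First, your worry about the remainder in $C(\bar Q)$ is unfounded. The state conditions in $C^\tau_{\bar u,\bar\mu}$ are tolerances relative to $\tau\|v\|_{L^\sigma(\Sigma_c)}$. Fr\'echet differentiability of $G:L^\sigma(\Sigma_c)\to C(\bar Q)^2$ (Theorem~\ref{T2.2}) gives $\|\rho_k^{-1}(y_{u_k}-\bar y)-z_{\bar u,v_k}\|_{C(\bar Q)}=o(1)\|v_k\|_{L^\sigma(\Sigma_c)}$, which is exactly what is needed.

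Second, the lower bound $\int_{\bar Q}z_{\bar u,v_k}\dbmu\ge-o(1)$ does not follow from an upper bound on $\frac{\partial\mL}{\partial u}(\bar u,\bar\mu)v_k$. You must combine \eqref{E3.9}, i.e.\ $J'(\bar u)v_k+\int_{\bar Q}z_{\bar u,v_k}\dbmu\ge0$, with $J'(\bar u)v_k\le C\rho_k$. The latter comes from Taylor-expanding $J$ itself: its second derivative involves only the bounded adjoint $\varphi_u$ and is therefore $O(\|v_k\|^2_{L^2(\Sigma_c)})$. The paper instead passes to the weak limit $v$ in $L^\sigma(\Sigma_c)$, shows $v\in C_{\bar u,\bar\mu}$ with $\int_{\bar Q}z_{\bar u,v}\dbmu=0$, and uses $z_{\bar u,v_k}\to z_{\bar u,v}$ in $C(\bar Q)^2$.
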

\begin{proof}
We argue by contradiction. If the theorem is false, for every integer $k \ge 1$ there exists $u_k \in \Aad$ such that
\begin{equation}
\|u_k - \bar u\|_{L^2(\Sigma_c)} < \frac{1}{k} \text{ and } J(u_k) < J(\bar u) + \frac{1}{2k}\|u_k - \bar u\|^2_{L^2(\Sigma_c)}.
\label{E4.5}
\end{equation}
Since $\{u_k\}_{k = 1}^\infty \subset \Uad$, the boundedness of $\{u_k\}_{k = 1}^\infty$ in $L^\infty(\Sigma_c)$ and the convergence $u_k \to \bar u$ in $L^2(\Sigma_c)$ imply that $\rho_k = \|u_k - \bar u\|_{L^\sigma(\Sigma_c)} \to 0$ as $k \to \infty$. We set $v_k = \frac{1}{\rho_k}(u_k - \bar u)$. Taking a subsequence, denoted in the same way, we get that
\begin{equation}
\|v_k\|_{L^\sigma(\Sigma_c)} = 1\ \forall k \ge 1\ \text{ and }\ v_k \rightharpoonup v\text{ in } L^\sigma(\Sigma_c).
\label{E4.6}
\end{equation}
Note that the continuous embedding $L^\sigma(\Sigma_c) \subset L^2(\Sigma_c)$ yields the convergence $v_k \rightharpoonup v$ in $L^2(\Sigma_c)$. The rest of the proof is divided into four steps.\vspace{2mm}

{\em Step I - $v \in C_{\bar u,\bar\mu}$.} First, we observe that \eqref{E3.20} and the fact that $y_{u_k} \in \K$ imply that $\int_{\bar Q}(y_{u_k} - \bar y)\dbmu \le 0$. Combining this with \eqref{E4.5} we infer that
\begin{equation}
\mL(u_k,\bar\mu) < \mL(\bar u,\bar\mu) + \frac{1}{2k}\|u_k - \bar u\|^2_{L^2(\Sigma_c)}.
\label{E4.7}
\end{equation}
Applying the mean value theorem, we infer from this inequality for some $\theta_k \in (0,1)$ and $u_{\theta_k} = \bar u + \theta_k(u_k - \bar u)$ that
\[
\frac{\partial\mL}{\partial u}(u_{\theta_k},\bar\mu)(u_k - \bar u) < \frac{1}{2k}\|u_k - \bar u\|^2_{L^2(\Sigma_c)}.
\]
Dividing this expression by $\rho_k$ we find
\[
\frac{\partial\mL}{\partial u}(u_{\theta_k},\bar\mu)v_k < \frac{1}{2k}\rho_k \to 0\ \ \text{as } k \to \infty.
\]
We denote by $\varphi_k$ the adjoint state associated to $(u_{\theta_k},\bar\mu)$: $\varphi_k = \varphi_{u_{\theta_k}} + \varphi_{\bar\mu}$. To pass to the limit above in the left hand side we have to study the convergence $\varphi_k \to  \bar\varphi$. Setting $\phi_k = \varphi_k -  \bar\varphi = \varphi_{u_{\theta_k}} - \varphi_{\bar u}$ and subtracting the equations satisfied by $\varphi_{u_{\theta_k}}$ and $\varphi_{\bar u}$ we find
\begin{align*}
&\left\{\begin{array}{l}\displaystyle -\frac{\partial\phi_{k,1}}{\partial t} - d_1\Delta\phi_{k,1} - \dive{(V_1\phi_{k,1})} + (\gamma \bar y_2^2 + a_1)\phi_{k,1} + (b_1 - \gamma\bar y_2^2)\phi_{k,2}\\ \displaystyle
= \gamma(\bar y_2^2 - y_{k,2}^2)\varphi_{k,1} + \gamma(y_{k,2}^2 - \bar y_2^2)\varphi_{k,2} + \frac{\partial L}{\partial y_1}(x,t,y_k) - \frac{\partial L}{\partial y_1}(x,t,\bar y)\ \text{ in } Q,\\\displaystyle
d_1\partial_n\phi_{k,1} + \lambda_1\phi_{k,1} = 0 \ \text{ on } \Sigma,\quad \phi_{k,1}(T) = \frac{\partial l}{\partial y_1}(x,y_k(T)) - \frac{\partial l}{\partial y_1}(x,\bar y(T))\ \text{ in } \Omega,\end{array}\right.\\
&\left\{\begin{array}{l}\displaystyle -\frac{\partial\phi_{k,2}}{\partial t} - d_2\Delta\phi_{k,2} - \dive{(V_2\phi_{k,2})} + (b_2 -2\gamma\bar y_1\bar y_2)\phi_{k,2} + (a_2 + 2\gamma \bar y_1\bar y_2)\phi_{k,1}\\\phantom{\displaystyle -\frac{\partial\phi_{k,2}}{\partial t} - d_2\Delta\phi_{k,2}} \displaystyle= 2\gamma(y_{k,1}y_{k,2} - \bar y_1\bar y_2)\varphi_{k,2} + 2\gamma(\bar y_1\bar y_2 - y_{k,1}y_{k,2})\varphi_{k,1}\\\phantom{\displaystyle -\frac{\partial\phi_{k,2}}{\partial t} - d_2\Delta\phi_{k,2}}\displaystyle + \frac{\partial L}{\partial y_2}(x,t,y_k) - \frac{\partial L}{\partial y_2}(x,t,\bar y) \text{ in } Q,\\\displaystyle
d_2\partial_n\phi_{k,2} + \lambda_2\phi_{k,2} =  0\ \text{ on } \Sigma,\quad \phi_{k,2}(T) = \frac{\partial l}{\partial y_2}(x,y_k(T)) - \frac{\partial l}{\partial y_2}(x,\bar y(T))\ \text{ in } \Omega.\end{array}\right.
\end{align*}
Since $u_{\theta_k} \to \bar u$ in $L^\sigma(\Sigma_c)$, we have that $y_k = G(u_{\theta_k}) \to G(\bar u)$ in $C(\bar Q)^2$. Using this property we infer with the aid of assumptions \eqref{E3.1} and \eqref{E3.2} that the right hand sides of the system above converge in $L^\infty(Q)$ or $L^\infty(\Omega)$ to 0. This implies that $\varphi_k - \bar\varphi \to 0$ in $[W(0,T) \cap L^\infty(Q)]^2$ as $k \to \infty$. Then, using \eqref{E4.1} and the above inequality we conclude that
\[
\frac{\partial\mL}{\partial u}(\bar u,\bar\mu)v \le \liminf_{k \to \infty}\frac{\partial\mL}{\partial u}(u_{\theta_k},\bar\mu)v_k \le 0.
\]
On the other hand, from \eqref{E3.21} and \eqref{E4.1} we infer
\[
\frac{\partial\mL}{\partial u}(\bar u,\bar\mu)v = \lim_{k \to \infty}\frac{1}{\rho_k}\frac{\partial\mL}{\partial u}(\bar u,\bar\mu)(u_k - \bar u) = \lim_{k \to \infty}\frac{1}{\rho_k}\int_{\Sigma_c}(\bar\varphi_2 + \nu\bar u)(u_k - \bar u)\dx\dt \ge 0.
\]
The last two inequalities yield $\frac{\partial\mL}{\partial u}(\bar u,\bar\mu)v = 0$. Since the set of elements $v \in L^\sigma(\Sigma_c)$ satisfying the second condition in the definition of $C_{\bar u,\bar\mu}$ is convex and closed, and every $v_k$ satisfies this second condition, we conclude that $v$ also does. Let us prove the third condition. Setting $\zeta_k = \frac{1}{\rho_k}(y_{u_k} - \bar y)$, subtracting the equations satisfied by $y_{u_k}$, and $\bar y$ and dividing by $\rho_k$ we get
\begin{align*}
&\left\{
\begin{array}{l}
 \displaystyle\frac{\partial \zeta_{k,1}}{\partial t} - d_1\Delta\zeta_{k,1} 
 + V_{1}\cdot\nabla\zeta_{k,1} + (a_1 + \gamma\bar y_2^2)\zeta_{k,1}\\ \phantom{\frac{\partial\zeta_{k,1}}{\partial t} - d_1\Delta\zeta_{k,1} } + [\gamma y_{u_k,1}(\bar y_2 + y_{u_k,2}) + a_2]\zeta_{k,2} = 0\text{ in }Q, \\
d_1\partial_n\zeta_{k,1} + \lambda_1\zeta_{k,1} = 0\text{ on }\Sigma,\qquad \zeta_{k,1}(0) = 0\text{ in }\Omega, 
\end{array}
\right.\\
&\left\{
\begin{array}{l}
\displaystyle  \frac{\partial\zeta_{k,2}}{\partial t} - d_2\Delta\zeta_{k,2} 
+ V_{2}\cdot\nabla\zeta_{k,2} + [b_2 - \gamma y_{u_k,1}(\bar y_2 + y_{u_k,2})]\zeta_{k,2}\\ \phantom{\frac{\partial\zeta_{k,1}}{\partial t} - d_1\Delta\zeta_{k,1} }  + (b_1 - \gamma \bar y^2_2)\zeta_{k,1} = 0\text{ in }Q, \\d_2\partial_n\zeta_{k,2} + \lambda_2\zeta_{k,2} = v_k\chi_{\Sigma_c}\text{ on }\Sigma,\qquad \zeta_{k,2}(0) = 0\text{ in }\Omega. 
\end{array}
\right.
\end{align*}
Since $\{v_k\}_{k = 1}^\infty$ is bounded in $L^\sigma(\Sigma_c)$, we infer from \cite[Chapter III]{Lad-Sol-Ura68} and \cite{DiBenedetto86} that $\{\zeta_k\}_{k = 1}^\infty$ is bounded in a H\"older space and in $W(0,T)^2$. Then, we can pass to the limit in the system above and deduce that $\|\zeta_k - z_{\bar u,v}\|_{C(\bar Q)} \to 0$ as $k \to \infty$. Therefore, due to $\zeta_{k,i}(x,t) \le 0\quad \text{if } \bar y_i(x,t) = \ymax{i}$, $i = 1, 2$, passing to the limit we infer that $z_{\bar u,v,i}(x,t) \le 0$ if $\bar y_i(x,t) = \ymax{i}$, $i = 1, 2$. Analogously, we prove that $z_{\bar u,v.i}(x,t) \ge 0$ if $\bar y_i(x,t) = \ymin{i}$, $i = 1, 2$.

Using the established properties of $z_{\bar u,v}$ and \eqref{E3.10} we obtain
\begin{align*}
0 = &\frac{\partial\mL}{\partial u}(\bar u,\bar\mu)v = J'(\bar u)v + \int_{\bar Q}z_{\bar u,v}\dbmu =  J'(\bar u)v + \int_{\bar Q}z_{\bar u,v}\dbmu^+ - \int_{\bar Q}z_{\bar u,v}\dbmu^-  \\
= & J'(\bar u)v - \int_{\bar Q}|z_{\bar u,v}|\dbmu^+ - \int_{\bar Q}|z_{\bar u,v}|\dbmu^- = J'(\bar u)v - \int_{\bar Q}|z_{\bar u,v}|\dabmu.
\end{align*}
Moreover, from \eqref{E4.5} we deduce with the mean value theorem that $J'(\bar u)v \le 0$. Then, the above equality is possible if and only if $J'(\bar u)v = \int_{\bar Q}z_{\bar u,v}\dbmu = 0$. This completes the proof of {\em Step I.}\vspace{2mm}

{\em Step II - There exists an integer $k_\tau \ge 1$ such that $\{v_k\}_{k = k_\tau}^\infty \subset C^\tau_{\bar u,\bar\mu}$.} Since $\displaystyle\lim_{k \to \infty}\mL(\bar u,\bar\mu)v_k = \mL(\bar u,\bar\mu)v = 0$ and recalling that $\|v_k\|_{L^\sigma(\Sigma_c)} = 1$, we infer the existence of $k_1$ such that $\mL(\bar u,\bar\mu)v_k \le \tau = \tau\|v_k\|_{L^\sigma(\Sigma_c)}$ for every $k \ge k_1$. Due to $\{u_k\}_{k = 1}^\infty \subset \Uad$, we have that $v_k(x,t) \ge 0$ if $\bar u(x,t) = \umin$ and $v_k(x,t) \le 0$ if $\bar u(x,t) = \umax$. The inequality $\zeta_{k,i}(x,t) \le 0$ if $\bar y_i(x,t) = \ymax{i}$, $i = 1, 2$, and the convergence $\zeta_k \to z_{\bar u,v}$ in $C(\bar Q)^2$ imply the existence of $k_2$ such that $z_{\bar u,v,i}(x,t) \le \tau = \tau\|v_k\|_{L^\sigma(\Sigma_c)}$ if $\bar y_i(x,t) = \ymax{i}$ for all $k \ge k_2$. In a similar way we get the existence of $k_3$ such that $z_{\bar u,v,i}(x,t) \ge -\tau = -\tau\|v_k\|_{L^\sigma(\Sigma_c)}$ if $\bar y_i(x,t) = \ymin{i}$ for all $k \ge k_3$. Finally, the convergence $\int_{\bar Q}z_{\bar u,v_k}\dbmu \to \int_{\bar Q}z_{\bar u,v}\dbmu = 0$ yields the existence of $k_4$ such that $\int_{\bar Q}z_{\bar u,v_k}\dbmu \ge -\tau = -\tau\|v_k\|_{L^\sigma(\Sigma_c)}$ for every $k \ge k_4$. Hence, it is enough to set $k_\tau = \max_{1 \le j\le 4}k_j$ to conclude that $\{v_k\}_{k = k_\tau}^\infty \subset C_{\bar u,\bar\mu}^\tau$.\vspace{2mm}

{\em Step III - Contradiction.} Performing a second-order Taylor expansion of $\mL(u_k,\bar\mu)$ around $(\bar u,\bar\mu)$, taking into account that $\frac{\partial\mathcal{L}}{\partial u}(\bar u, \bar\mu)(u_k - \bar u)\ge 0$, and using \eqref{E4.7} we obtain
\begin{align*}
\frac{1}{2}\frac{\partial^2\mathcal{L}}{\partial u^2}(u_{\vartheta_k}, \bar\mu)(u_k - \bar u)^2 &\le \frac{\partial\mathcal{L}}{\partial u}(\bar u, \bar\mu)(u_k - \bar u) + \frac{1}{2}\frac{\partial^2\mathcal{L}}{\partial u^2}(u_{\vartheta_k}, \bar\mu)(u_k - \bar u)^2\\
&= \mL(u_k,\bar\mu) - \mL(\bar u,\bar\mu) < \frac{1}{2k}\|u_k - \bar u\|^2_{L^2(\Sigma_c)},
\end{align*}
for some $\vartheta_k \in (0,1)$ and $u_{\vartheta_k} = \bar u + \vartheta_k(u_k - \bar u)$. Dividing the above inequality by $\rho_k^2$ we infer
\begin{equation}
\frac{\partial^2\mathcal{L}}{\partial u^2}(u_{\vartheta_k}, \bar\mu)v_k^2 < \frac{1}{k}\|v_k\|^2_{L^2(\Sigma_c)}\quad \forall k \ge k_\tau.
\label{E4.8}
\end{equation}
We next prove that there exists $k_\delta \ge 1$ such that
\begin{equation}
\Big|\frac{\partial^2\mathcal{L}}{\partial u^2}(u_{\vartheta_k}, \bar\mu)v_k^2-\frac{\partial^2\mathcal{L}}{\partial u^2}(\bar u, \bar\mu)v_k^2\Big|\leq \frac{\delta}{2}\|v_k\|_{L^2(Q)}^2\quad \forall k\geq k_\delta.
\label{E4.9}
\end{equation}
Let us take $k_0 = \max\{k_\tau,k_\delta\}$. Since $\{v_k\}_{k = k_0}^\infty \in C^\tau_{\bar u,\bar\mu}$, \eqref{E4.3} yields
\[
\delta\|v_k\|^2_{L^2(\Sigma_c)} \le \frac{\partial^2\mathcal{L}}{\partial u^2}(\bar u, \bar\mu)v_k^2\quad \forall k \ge k_0.
\]
Combining this with \eqref{E4.9} and using \eqref{E4.8} we get
\[
\frac{\delta}{2}\|v_k\|^2_{L^2(\Sigma_c)} \le \frac{\partial^2\mathcal{L}}{\partial u^2}(u_{\vartheta_k}, \bar\mu)v_k^2 < \frac{1}{k}\|v_k\|^2_{L^2(\Sigma_c)}\qquad \forall k \ge k_0.
\]
Since $\|v_k\|_{L^\sigma(\Sigma_c)} = 1$, we have that $\|v_k\|^2_{L^2(\Sigma_c)} \neq 0$. Hence, the above inequality implies that $\frac{\delta}{2} < \frac{1}{k}$ for all $k \ge k_0$ which contradicts the fact that $\delta > 0$.\vspace{2mm}

{\em Step IV - Proof of \eqref{E4.9}.} Setting $\varphi_k = \varphi_{u_{\vartheta_k}} + \varphi_{\bar\mu}$, $y_k = y_{u_{\vartheta_k}}$, $z_k = z_{u_{\vartheta_k},v_k} = DG(u_{\vartheta_k})v_k$, and $z_{v_k} = z_{\bar u,v_k} = DG(\bar u)v_k$, we get with \eqref{E4.2}
\begin{align*}
&\Big|\frac{\partial^2\mathcal{L}}{\partial u^2}(u_{\vartheta_k}, \bar\mu)v_k^2-\frac{\partial^2\mathcal{L}}{\partial u^2}(\bar u, \bar\mu)v_k^2\Big| \le \int_\Omega\Big|\frac{\partial^2l}{\partial y^2}(x,y_k)z_k^2 - \frac{\partial^2l}{\partial y^2}(x,\bar y)z_{v_k}^2\Big|\dx\\
&+\int_Q\Big|\frac{\partial^2L}{\partial y^2}(x,t,y_k)z_k^2 - \frac{\partial^2L}{\partial y^2}(x,t,\bar y)z_{v_k}^2\Big|\dx\dt\\
&+ 2\gamma\int_Q|(\varphi_{u_{\vartheta_k},2} - \bar\varphi_2) - (\varphi_{u_{\vartheta_k},1} - \bar\varphi_1)||2y_{k,2}z_{k,1}z_{k,2} + y_{k,1}z_{k,2}^2|\dx\dt\\
&+2\gamma\int_Q|\bar\varphi_2 - \bar\varphi_1|[2|y_{k,2}z_{k,1}z_{k,2} -\bar y_2z_{v_k,1}z_{v_k,2}| + |y_{k,1}z_{k,2}^2 - \bar y_1z_{v_k,2}^2|]\dx\dt\\
&= I_1 + I_2 + I_3 + I_4.
\end{align*}
To estimate $I_1$ and $I_2$ we need an estimate for $e_k = z_k - z_{v_k}$. Subtracting the equation for $z_k$ and $z_{v_k}$ we find
\begin{align*}
&\left\{\begin{array}{l}
 \displaystyle \frac{\partial e_{k,1}}{\partial t} - d_1\Delta e_{k,1} + V_{1}\cdot\nabla e_{k,1} + (\gamma \bar y_2^2 + a_1)e_{k,1} + (a_2 + 2\gamma\bar y_1\bar y_2)e_{k,2}\\
\qquad = \gamma(\bar y^2_2 - y_{k,2}^2)z_{k,1} + 2\gamma(\bar y_1\bar y_2 - y_{k,1}y_{k,2})z_{k,2}\text{ in }Q \\
d_1\partial_ne_{k,1} + \lambda_1e_{k,1} = 0\text{ on }\Sigma,\qquad e_{k,1}(0) = 0\text{ in }\Omega, 
\end{array}\right. \\
&\left\{\begin{array}{l}
\displaystyle  \frac{\partial e_{k,2}}{\partial t} - d_2\Delta e_{k,2} 
+ V_{2}\cdot\nabla e_{k,2} + (b_2 - 2\gamma\bar y_1\bar y_2)e_{k,2} + (b_1 - \gamma\bar y_2^2)e_{k,1}\\
\qquad = 2\gamma(\bar y_1\bar y_2 - y_{k,1}y_{k,2})z_{k,2} + \gamma(\bar y_2^2 - y_{k,2}^2)z_{k,1} \text{ in }Q \\ d_2\partial_ne_{k,2} +\lambda_2e_{k,2} = 0\text{ on }\Sigma,\qquad e_{k,2}(0) = 0\text{ in }\Omega, 
\end{array} \right.
\end{align*}
From this system we deduce the existence of a constant $C_1$ such that
\begin{equation}
\|z_k - z_{v_k}\|_{C(\bar Q)} \le C_1\rho_k\|v_k\|_{L^2(\Sigma_c)}\quad \forall k \ge 1.
\label{E4.10}
\end{equation}
Indeed, to prove this it is enough to get an estimate of the right hand side of the system above in $L^\infty(0,T;L^2(\Omega))$ in terms of $\rho_k\|v_k\|_{L^2(\Sigma_c)}$. We now prove this estimate. First, using Theorem \ref{T2.1}, and estimates \eqref{E2.3} and \eqref{E2.11} we obtain
\begin{align*}
&\|(\bar y^2_2 - y_{k,2}^2)z_{k,1}\|_{L^\infty(0,T;L^2(\Omega))} \le \|\bar y_2 - y_{k,2}\|_{L^\infty(0,T;L^2(\Omega))}\|\bar y_2 + y_{k,2}\|_{C(\bar Q)}\|z_{k,1}\|_{C(\bar Q)}\\
&\le C_2\|u_{\vartheta_k} - \bar u\|_{L^2(\Sigma_c)}\|v_k\|_{L^2(\Sigma_c)} = C_2\rho_k\|v_k\|^2_{L^2(\Sigma_c)}\\
&\le C_2|Q|^{\frac{\sigma - 2}{2\sigma}}\rho_k\|v_k\|_{L^\sigma(\Sigma_c)}\|v_k\|_{L^2(\Sigma_c)} = C_2|Q|^{\frac{\sigma - 2}{2\sigma}}\rho_k\|v_k\|_{L^2(\Sigma_c)}.
\end{align*}
Using again Theorem \ref{T2.1}, \eqref{E2.3}, and \eqref{E2.10} we infer
\begin{align*}
&\|(\bar y_1\bar y_2 - y_{k,1}y_{k,2})z_{k,2}\|_{L^\infty(0,T;L^2(\Omega))} \le \|\bar y_1 - y_{k,1}\|_{C(\bar Q)}\|\bar y_2\|_{C(\bar Q)}\|z_{k,2}\|_{L^\infty(0,T;L^2(\Omega))}\\
&\qquad\qquad\qquad+ \|\bar y_2 - y_{k,2}\|_{C(\bar Q)}\|y_{k,1}\|_{C(\bar Q)}\|z_{k,2}\|_{L^\infty(0,T;L^2(\Omega))}\\
&\le C_3\|u_{\vartheta_k} - \bar u\|_{L^2(\Sigma_c)}\|z_{k,2}\|_{L^\infty(0,T;L^2(\Omega))} + C_4\|u_{\vartheta_k} -  \bar u\|_{L^\sigma(\Sigma_c)}\|z_{k,2}\|_{L^\infty(0,T;L^2(\Omega))}\\
&\le C_5\rho_k\Big(\|v_k\|_{L^2(\Sigma_c)} + \|v_k\|_{L^\sigma(\Sigma_c)}\Big)\|v_k\|_{L^2(\Sigma_c)} \le C_6\rho_k\|v_k\|_{L^2(\Sigma_c)}.
\end{align*}
Thus, the last two estimates imply \eqref{E4.10}. Now, we estimate $I_1$. From the estimate \eqref{E4.10} and \eqref{E2.10} we get
\begin{align*}
I_1 &\le \int_\Omega\Big|\frac{\partial^2l}{\partial y^2}(x,y_k(x,T)) - \frac{\partial^2l}{\partial y^2}(x,\bar y(x,T))\Big|z^2_{v_k}(T)\dx\\
&+ \int_\Omega\Big|\frac{\partial^2l}{\partial y^2}(x,y_k(x,T))\Big||z_k(T) - z_{v_k}(T)||z_k(T) + z_{v_k}(T)|\dx\\
&\le \Big\|\frac{\partial^2l}{\partial y^2}(x,y_k(x,T)) - \frac{\partial^2l}{\partial y^2}(x,\bar y(x,T))\Big\|_{L^\infty(\Omega)}\|z_{v_k}(T)\|^2_{L^2(\Omega)}\\
&+ \Big\|\frac{\partial^2l}{\partial y^2}(x,y_k(x,T))\Big\|_{L^\infty(\Omega)}\|z_k(T) - z_{v_k}(T)\|_{L^2(\Omega)}\|z_k(T) + z_{v_k}(T)\|_{L^2(\Omega)}\\
&\le C_7\Big(\Big\|\frac{\partial^2l}{\partial y^2}(x,y_k(x,T)) - \frac{\partial^2l}{\partial y^2}(x,\bar y(x,T))\Big\|_{L^\infty(\Omega)} + \rho_k\Big)\|v_k\|^2_{L^2(\Sigma_c)}.
\end{align*}
Arguing similarly we prove that
\[
I_2 \le C_8\Big(\Big\|\frac{\partial^2L}{\partial y^2}(x,t,y_k) - \frac{\partial^2L}{\partial y^2}(x,t,\bar y)\Big\|_{L^\infty(Q)} + \rho_k\Big)\|v_k\|^2_{L^2(\Sigma_c)}.
\]
In order to estimate $I_3$ we use that $\varphi_k \to \bar\varphi$ in $[W(0,T) \cap L^\infty(Q)]^2$; see the proof of this in Step I. With this convergence property, the boundedness of $\{y_k\}_{k = 1}^\infty$ in $C(\bar Q)^2$, and \eqref{E2.10} we get
\[
I_3 \le C_9\|\varphi_k - \bar\varphi\|_{C(\bar Q)}\|z_k\|^2_{L^2(Q)} \le C_{10}\|\varphi_k - \bar\varphi\|_{C(\bar Q)}\|v_k\|^2_{L^2(\Sigma_c)}.
\]
To estimate $I_4$ we consider the following inequalities
\begin{align*}
&|y_{k,2}z_{k,1}z_{k,2} - \bar y_2z_{v_k,1}z_{v_k,2}|\\
&\qquad\le |y_{k,2} - \bar y_2||z_{k,1}||z_{k,2}| + |\bar y_2|(|z_{k,1} - z_{v_k,1}| |z_{k,2}| + |z_{k,2} - z_{v_k,2}|  |z_{v_k,1}|),\\
&|y_{k,1}z_{k,2}^2 - \bar y_1z_{v_k,2}^2| \le |y_{k,1} - \bar y_1|z_{k,2}^2 + |\bar y_1||z_{k,2} - z_{v_k,2}||z_{k,2} + z_{v_k,2}|.
\end{align*}
By Theorem \ref{T3.4} we know that $\bar\varphi \in L^{\sigma_1}(0,T;W^{1,\sigma_2}(\Omega))$ for every $\sigma_1, \sigma_2 \in [1,2)$ satisfying that $\frac{2}{\sigma_1} + \frac{n}{\sigma_2} > n + 1$. For every $\tilde\sigma \in [1,\frac{n + 2}{n})$ there exist $\sigma_1$ and $\sigma_2$ satisfying the previous conditions such that the continuous embedding $L^{\sigma_1}(0,T;W^{1,\sigma_2}(\Omega)) \subset L^{\tilde\sigma}(Q)$ holds. We select $\tilde\sigma = \frac{4}{3}$ and $\sigma_0 = 4$ in \eqref{E2.12}. Then we get with \eqref{E2.3}, \eqref{E2.11}, and \eqref{E2.12}
\begin{align*}
&\int_Q|\bar\varphi_2 - \bar\varphi_1|||y_{k,2} - \bar y_2||z_{k,1}||z_{k,2}|\dx\dt\\
&\le \|y_{k,2} - \bar y_2\|_{C(\bar Q)}\|z_{k,1}\|_{C(\bar Q)}\|\bar\varphi\|_{L^{\frac{4}{3}}(Q)}\|z_{k,2}\|_{L^4(Q)}\\
&\le C_{11}\|u_{\vartheta_k} - \bar u\|_{L^\sigma(\Sigma_c)}\|v_k\|^2_{L^2(\Sigma_c)} = C_{11}\rho_k\|v_k\|^2_{L^2(\Sigma_c)}.
\end{align*}
From \eqref{E2.12} and \eqref{E4.10} we deduce
\begin{align*}
&\int_Q|\bar\varphi_2 - \bar\varphi_1||\bar y_2|(|z_{k,1} - z_{v_k,1}| |z_{k,2}| + |z_{k,2} - z_{v_k,2}| |z_{v_k,1}|)\dx\dt \\
&\le \|\bar y_2\|_{C(\bar Q)}\|\bar\varphi\|_{L^{\frac{4}{3}}(Q)}\|z_k - z_{v_k}\|_{C(\bar Q)}\|z_{v_k}\|_{L^4(Q)} \le C_{12}\rho_k\|v_k\|^2_{L^2(\Sigma_c)}.
\end{align*}
With \eqref{E2.3} and \eqref{E2.12} we infer
\begin{align*}
&\int_Q|\bar\varphi_2 - \bar\varphi_1||y_{k,1} - \bar y_1|z_{k,2}^2\dx\dt \le \|\bar\varphi\|_{L^{\frac{4}{3}}(Q)}\|y_{k,1} - \bar y_1\|_{C(\bar Q)}\|z_{k,2}\|_{L^\infty(Q)}\|z_{k,2}\|_{L^4(Q)}\\
&\le C_{13}\|u_{\vartheta_k} - \bar u\|_{L^2(\Sigma_c)}\|v_k\|_{L^\sigma(\Sigma_c)}\|v_k\|_{L^2(\Sigma_c)} = C_{13}\rho_k\|v_k\|^2_{L^2(\Sigma_c)}.
\end{align*}
Finally, using again \eqref{E2.12} and \eqref{E4.10} it follows
\begin{align*}
&\int_Q|\bar\varphi_2 - \bar\varphi_1||\bar y_1||z_{k,2} - z_{v_k,2}||z_{k,2} + z_{v_k,2}|\dx\dt\\
&\le \|\bar\varphi\|_{L^{\frac{4}{3}}(Q)}\|\bar y_1\|_{C(\bar Q)}\|z_{k,2} - z_{v_k,2}\|_{C(\bar Q)}\|z_{k,2} + z_{v_k,2}\|_{L^4(Q)} \le C_{14}\rho_k\|v_k\|^2_{L^2(\Sigma_c)}.
\end{align*}

Adding the last four estimates we get that $I_4 \le \Big(\sum_{i = 11}^{14}C_i\Big)\rho_k\|v_k\|^2_{L^2(\Sigma_c)}$. Since $\lim_{k \to \infty}\|y_k - \bar y\|_{C(\bar Q)} = 0$ we infer with \eqref{E3.2} that
\begin{align*}
&\lim_{k \to \infty}\Big\|\frac{\partial^2l}{\partial y^2}(x,y_k(x,T)) - \frac{\partial^2l}{\partial y^2}(x,\bar y(x,T))\Big\|_{L^\infty(\Omega)} = 0,\\
&\lim_{k \to \infty}\Big\|\frac{\partial^2L}{\partial y^2}(x,t,y_k) - \frac{\partial^2L}{\partial y^2}(x,t,\bar y)\Big\|_{L^\infty(Q)} = 0.
\end{align*}
Recalling that $\lim_{k \to \infty}\|\varphi_k - \bar\varphi\|_{L^\infty(Q)} = 0$ and $\lim_{k \to \infty}\rho_k = 0$, from the estimates obtained for $\{I_i\}_{i = 1}^4$ we deduce the existence of $k_\delta \ge 1$ such that \eqref{E4.9} holds.
\end{proof}
\section{Examples}
\label{S5}
To illustrate the obtained theoretical results, we present two numerical experiments. In each of them, we solve a finite element discretization of the uncontrolled system $(u=0)$ starting from a small perturbation of a homogeneous steady state. We run the simulation until a non-homogeneous steady state is reached, up to machine precision, at time $T_\infty$. We denote this state by $y_d$. We then choose $T<T_\infty$. 
Our objective is to find a control $\bar u$ that drives the controlled state $\bar y$ close to $y_d(T_\infty)$ at time $T$.

We consider the Schnakenberg model. With our notation, the activator is $y_2$ and the inhibitor is $y_1$. In both examples, $\Omega = (0,1)^2$, $\lambda_1=\lambda_2 = 0$, $g_1(x,t)=g_2(x,t)=0$, $a_1=0$, $a_2=0$, $b_1=0$ and, for some positive parameters $a$, $b$, and $\gamma$, $b_2 = \gamma$, $f_1(x,t) = \gamma b$, $f_2(x,t) = \gamma a$. The homogeneous steady state for $u=0$ is given by $(b/(a+b)^2,a+b)$.

\paragraph{Example 1} We use the parameters from the example presented in \cite[Example 5]{Ruuth1995} and also considered in \cite{Fernandes-Fairweather2012,Garvie-Maini-Trenchea2010,Madzvamuse2006} and \cite[Example 9.2]{BenkovaPearsonPtashnyk2026}, which are $d_1 = 10$, $d_2=1$, $V_1 = V_2 = (0,0)$, $a=0.126779$, $b=0.792366$, and $\gamma=10^3$. The initial state is given by $y_{0,1}(x) = b/(a+b)^2 + p(x)$, $y_{0,2}(x) = a+b + p(x)$, where
\begin{align*}
    p(x) & =  0.0016\cos(2\pi(x_1+x_2)) + 0.01\sum_{j=1}^8\cos(2\pi j x_1).
\end{align*}
For the finite element approximation, we use a mesh of size $h=1/160=6.25\times 10^{-3}$ and a time step $\tau = 1/256\approx 3.9\times 10^{-3}$; computation details are given below.
In this case, we can stop at $T_\infty=0.5$. In Figure \ref{F3} (dashed red line), we show the $L^2(\Omega)$-distance between the final target state, shown in Figure \ref{F4}(b), and the intermediate uncontrolled states. 

For $T=0.125$, we solve the optimal control problem 
\[\min_{u\in \Aad} J(u) = \frac{1}{2} \|y_u(T) - y_d(T_\infty)\|^2_{L^2(\Omega)} + \frac{1}{2} \|u\|^2_{L^2(\Sigma)},\]
where the admissible set $\Aad$ is defined by $\umin = -1$, $\umax = 1$, $\ymin{1}=\ymin{2}=0.1$ and $\ymax{1}=\ymax{2}=+\infty$.
The controlled state at \(T=0.125\) is already very close to the target steady state; see Figure \ref{F3} blue line and Figure \ref{F4}(c). In this experiment, the state constraints are not active.

\begin{figure}
  \centering
  \includegraphics[width=.4\textwidth]{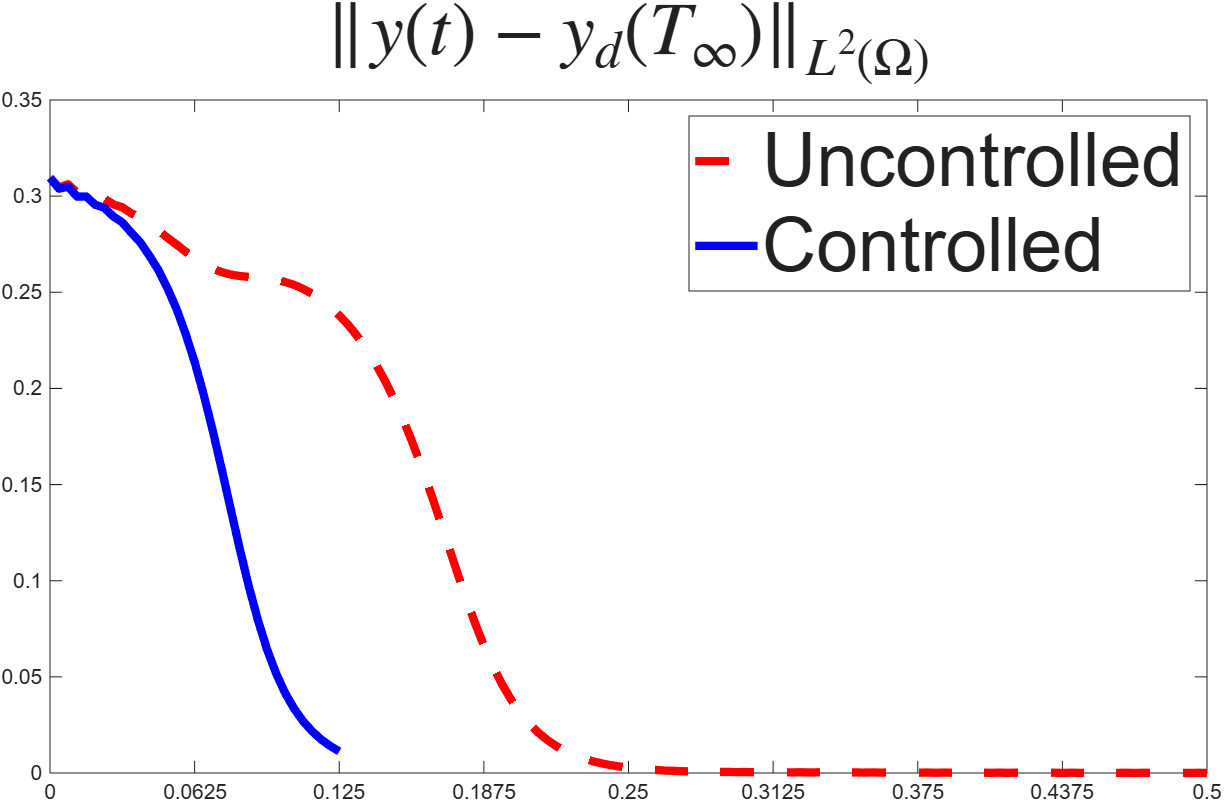}
  \caption{Example 1. }\label{F3}
\end{figure}

\begin{figure}
  \centering
  \begin{subfigure}[t]{0.2\textwidth}
  \includegraphics[width=\textwidth]{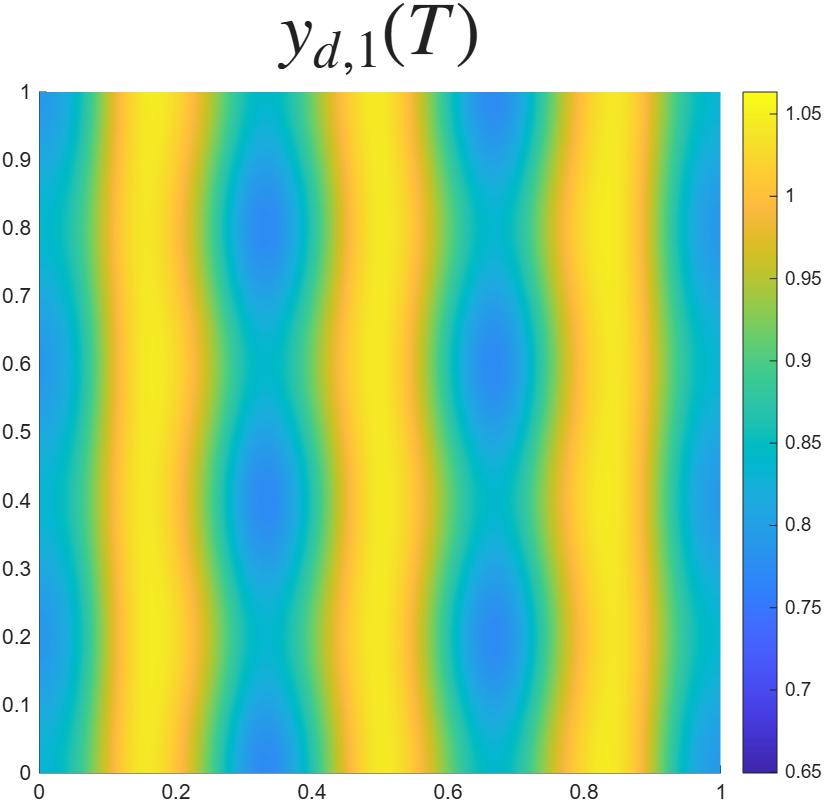}
  \includegraphics[width=\textwidth]{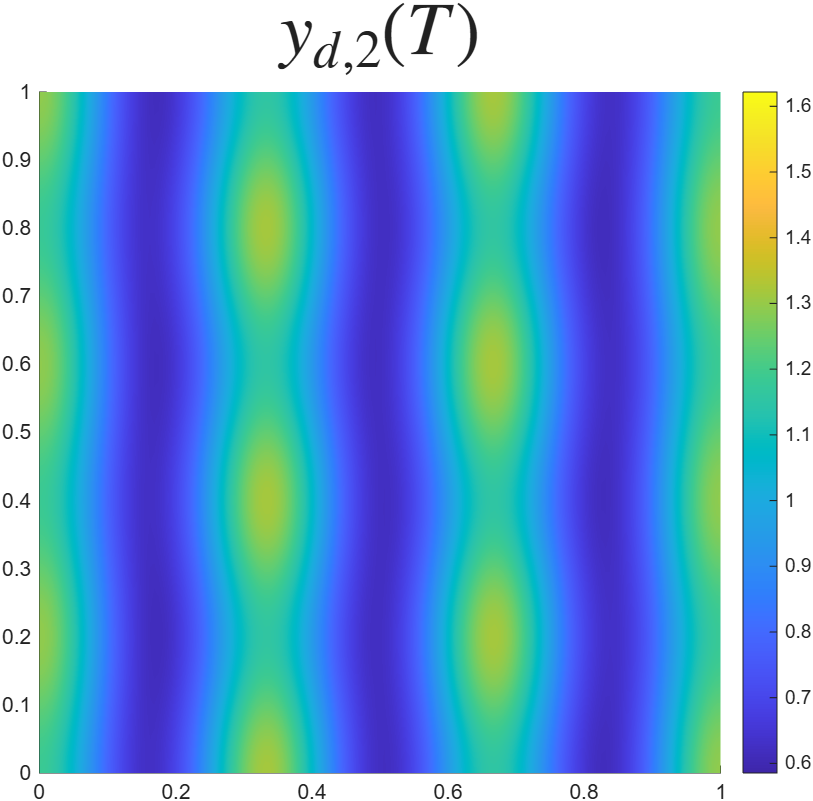}
  \caption{}
  \label{F4a}
  \end{subfigure}
  \begin{subfigure}[t]{0.2\textwidth}
  \includegraphics[width=\textwidth]{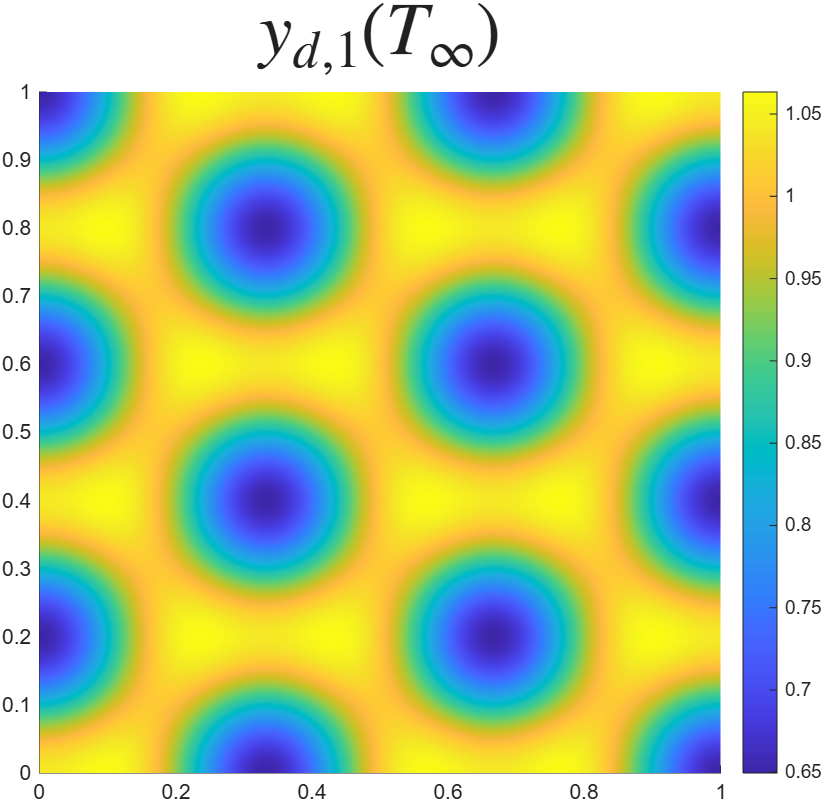}
   \includegraphics[width=\textwidth]{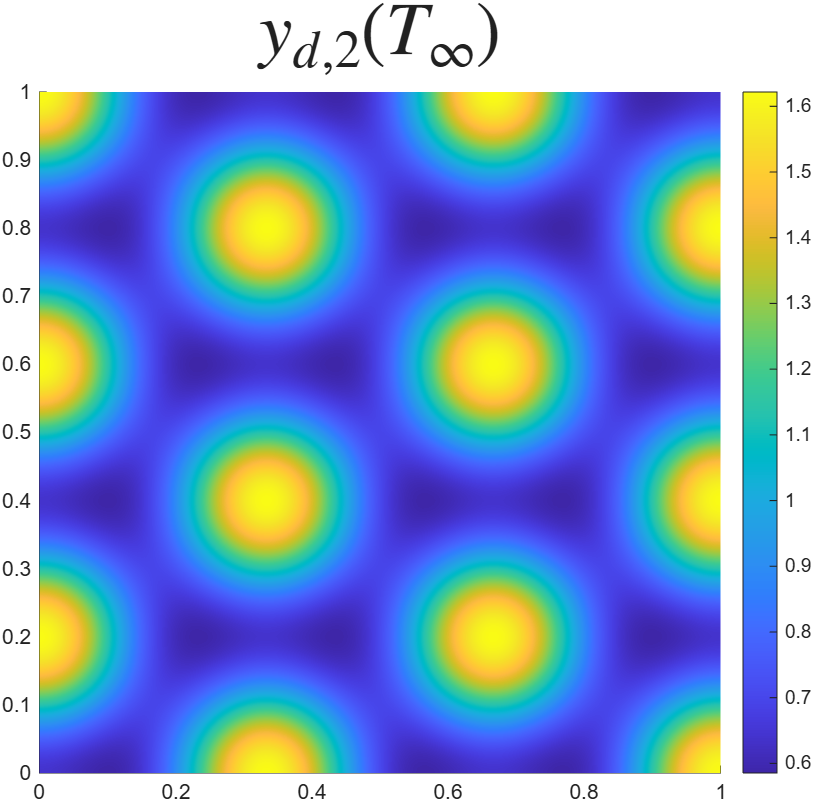}
  \caption{}
  \label{F4b}
  \end{subfigure}
  \begin{subfigure}[t]{0.2\textwidth}
  \includegraphics[width=\textwidth]{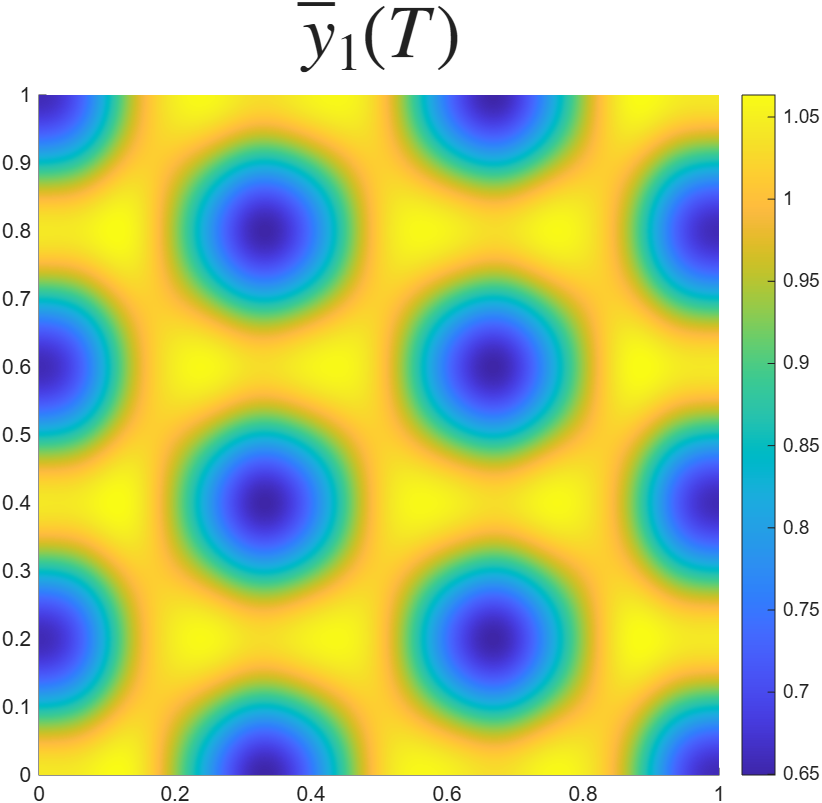}
   \includegraphics[width=\textwidth]{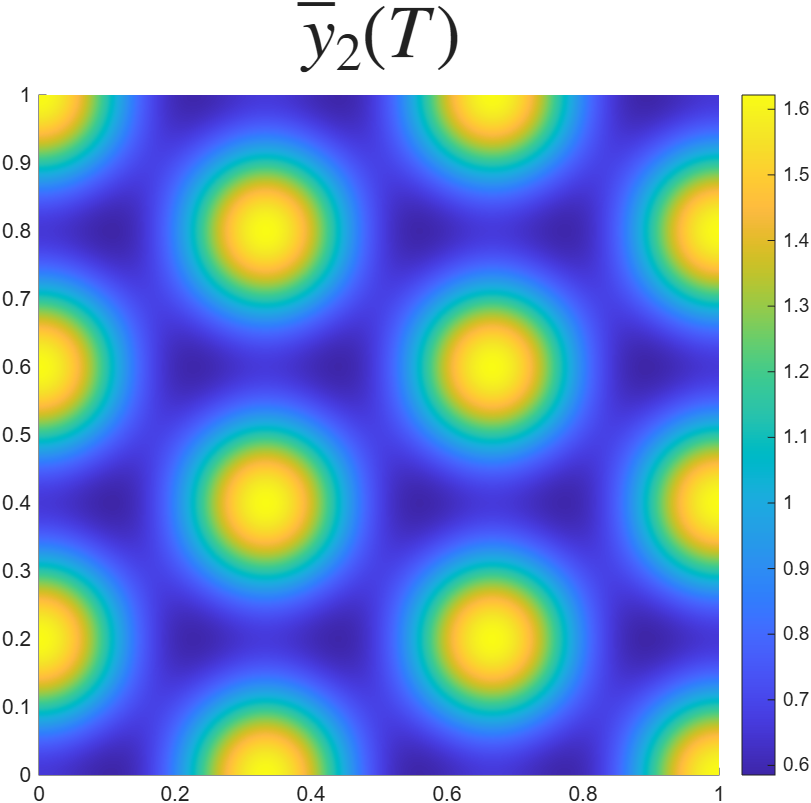}
  \caption{}
  \label{F4c}
  \end{subfigure}
    \caption{Example 1\\ {\small (\textsc{a}) uncontrolled state at $T=0.125$ (\textsc{b}) target state, (\textsc{c}) controlled state at $T=0.125$ }}
    \label{F4}
\end{figure}

\paragraph{Example 2} We consider a similar example illustrating the role of the state constraints. In this case,  $d_1 = 10^{-2}$, $d_2 = 10^{-4}$, $V_1 = V_2 = 0.01(\sin(\pi x_1),-\sin(\pi x_2))$, $a=0.01$, $b=2$, and $\gamma = 1$. The initial state is given by  $y_{0,1}(x) = b/(a+b)^2$, $y_{0,2}(x) = a+b + 5\times 10^{-3}\chi_\omega$, where $\omega=[3/8,5/8]^2$. In this example $h=0.125$, $\tau=0.125$, and $T_\infty=160$. The dashed red line in Figure \ref{F5} shows the $L^2(\Omega)$-distance between the final target state 
and the intermediate uncontrolled states. Snapshots of the activator at regular time intervals can be found in Figure \ref{F7}.

\begin{figure}
  \centering
  \includegraphics[width=.4\textwidth]{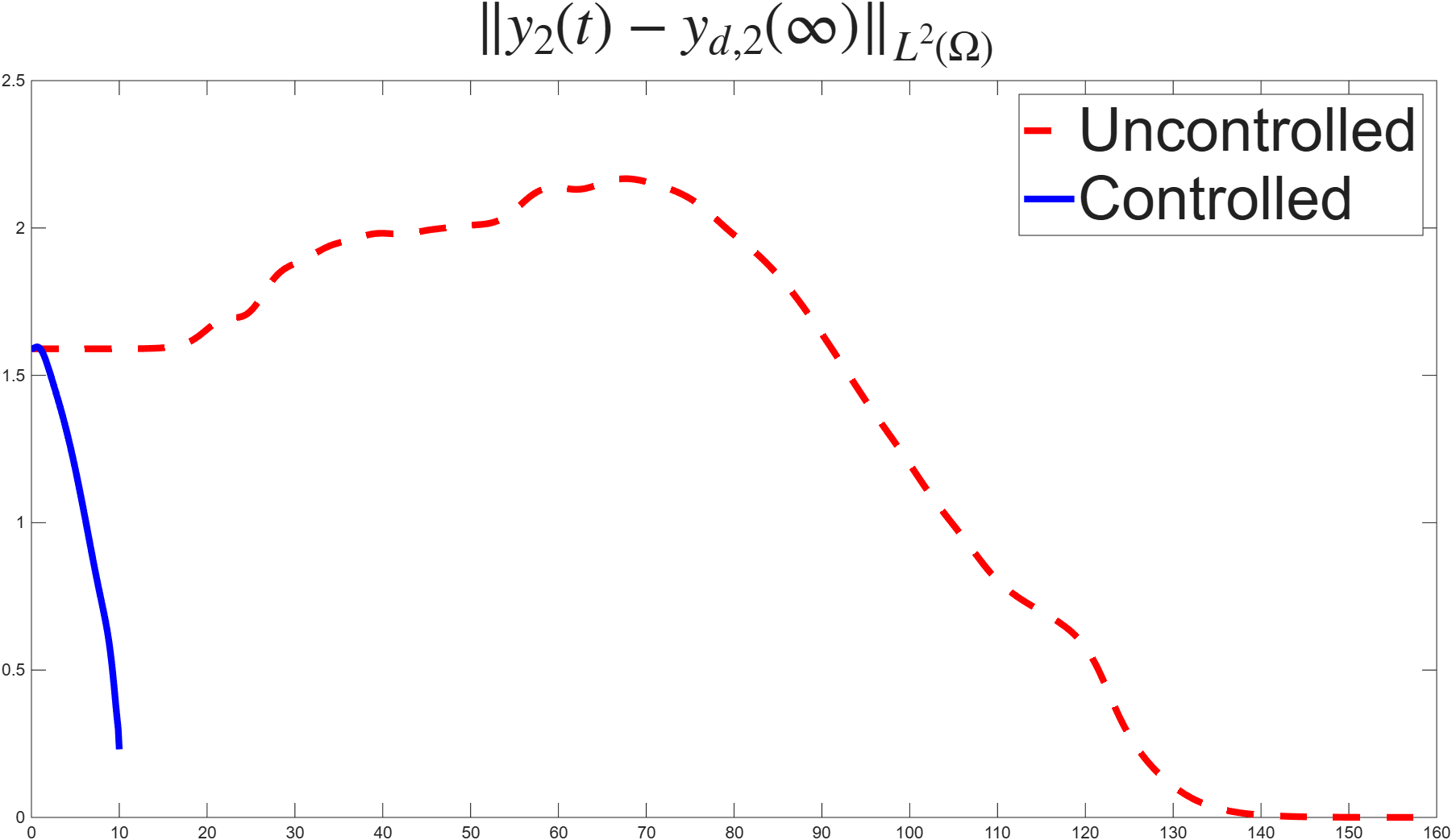}
  \caption{Example 2. }\label{F5}
\end{figure}

For $T=10$, we define the functional
\[J(u) = \frac{1}{2} \|y_{u,2}(T) - y_{d,2}(T_\infty)\|^2_{L^2(\Omega)} + \frac{1}{2} \|u\|^2_{L^2(\Sigma)},\]
and the parameters  $\umin = -1$, $\umax = 1$, $\ymin{1}=\ymin{2}=0.1$ and $\ymax{1}=\ymax{2}=+\infty$. We consider the following two control problems:
\[\mathrm{(\tilde P)}\min_{u\in\Uad} J(u)\qquad\text{ and } \qquad\mathrm{(P)}\min_{u\in\Aad} J(u).\]
We computed the solution $\tilde u$ of $\mathrm{(\tilde P)}$ and we found that its associated state $\tilde y$ takes negative values at some points; for instance, $\tilde y_2(0,1,4.625) =   -1.5406$. Such a state is not physically admissible, since \(y_1\) and \(y_2\) represent chemical concentrations. On the other hand, the solution of $\mathrm{(P)}$ yields a physically admissible state. Snapshots of $\bar y_2$ taken at regular intervals are shown in Figure \ref{F8}. In Figure \ref{F5} (blue line), we show the $L^2 (\Omega)$-distance between the target and the controlled activator at time $t$.

\begin{figure}[h!]
  \centering
  \includegraphics[width=.2\textwidth]{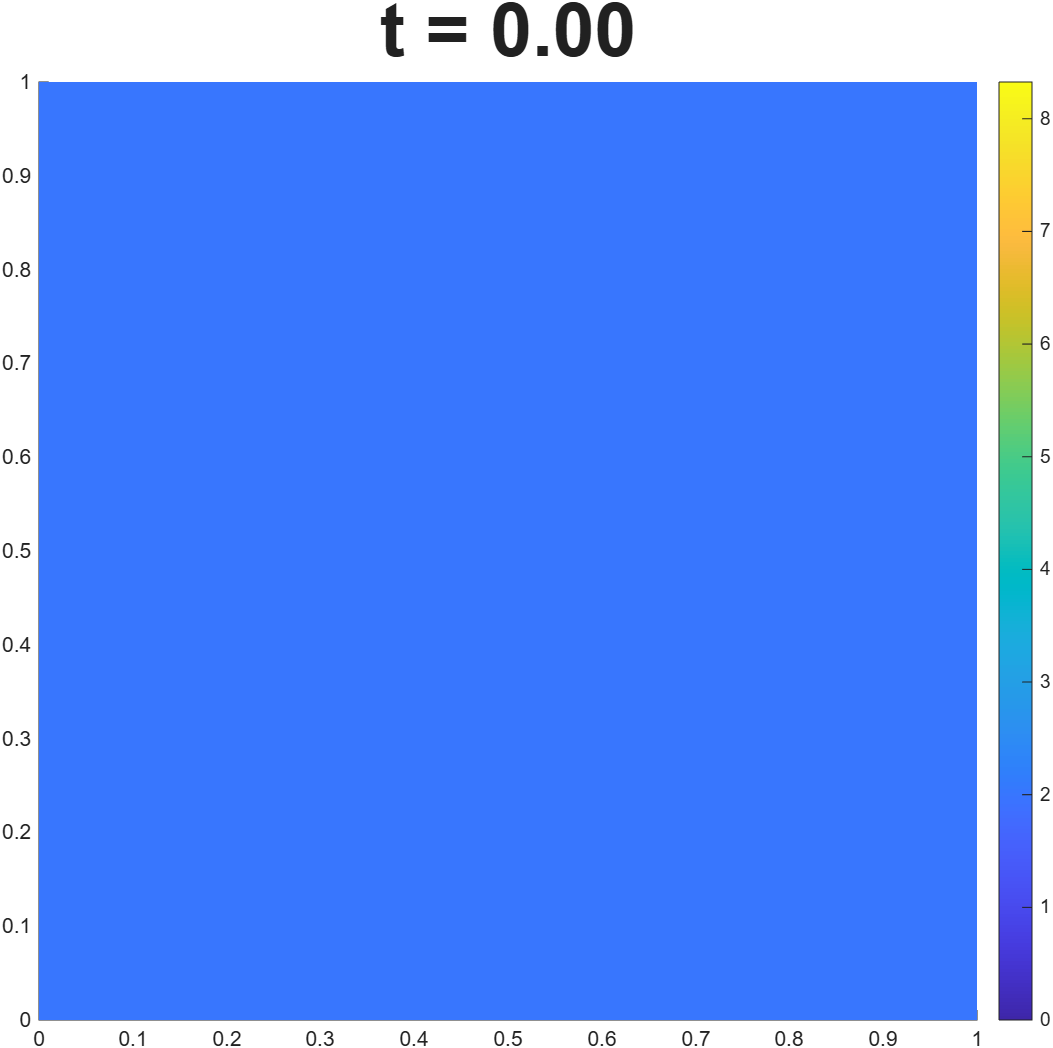}
  \includegraphics[width=.2\textwidth]{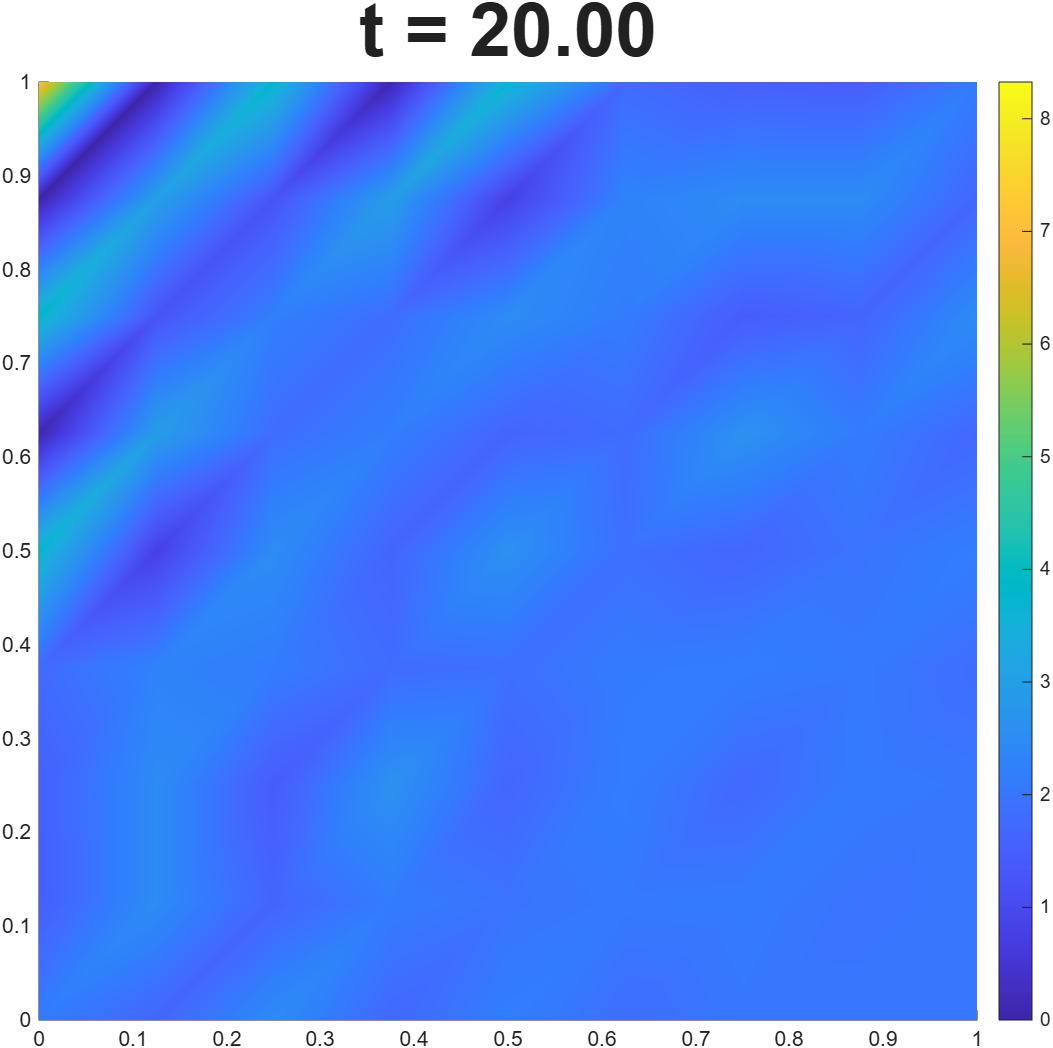}
  \includegraphics[width=.2\textwidth]{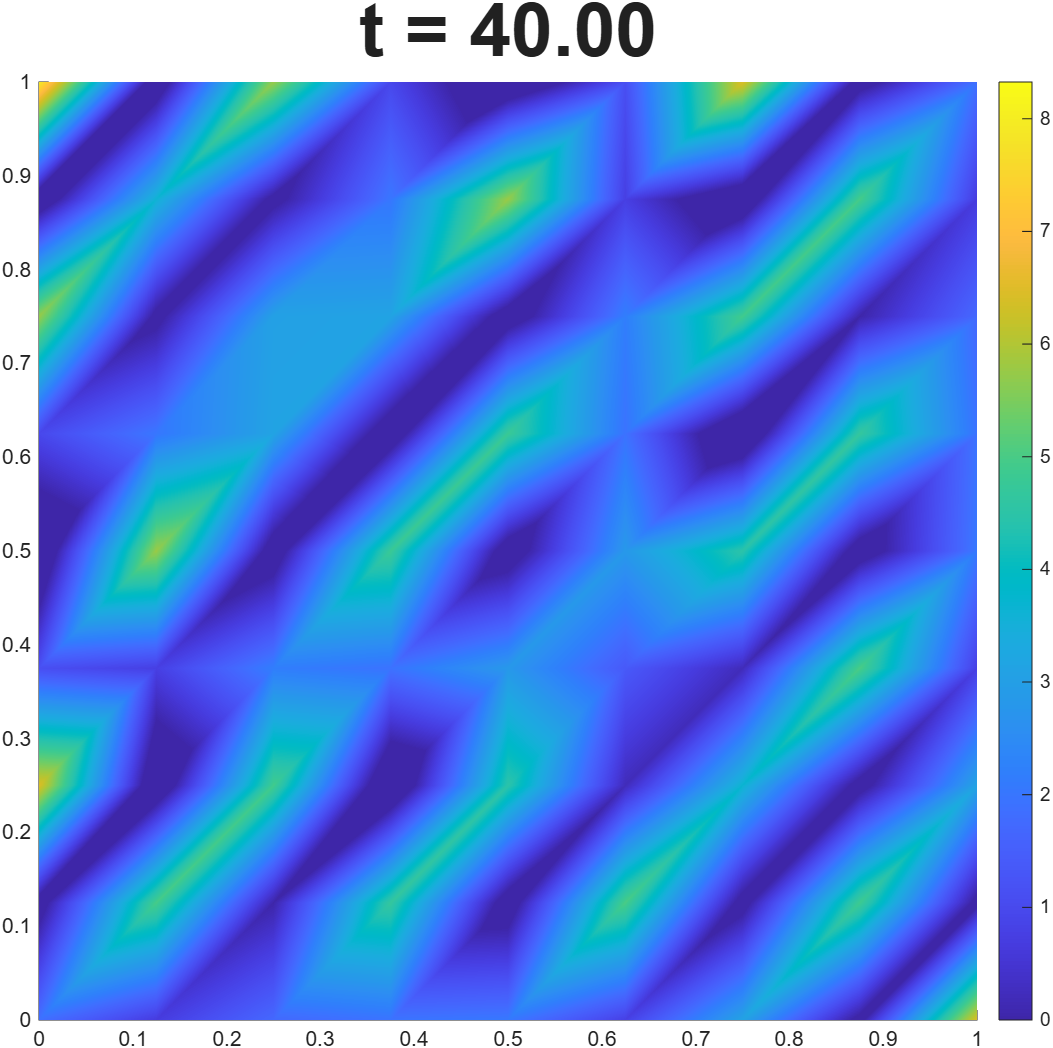}

  \includegraphics[width=.2\textwidth]{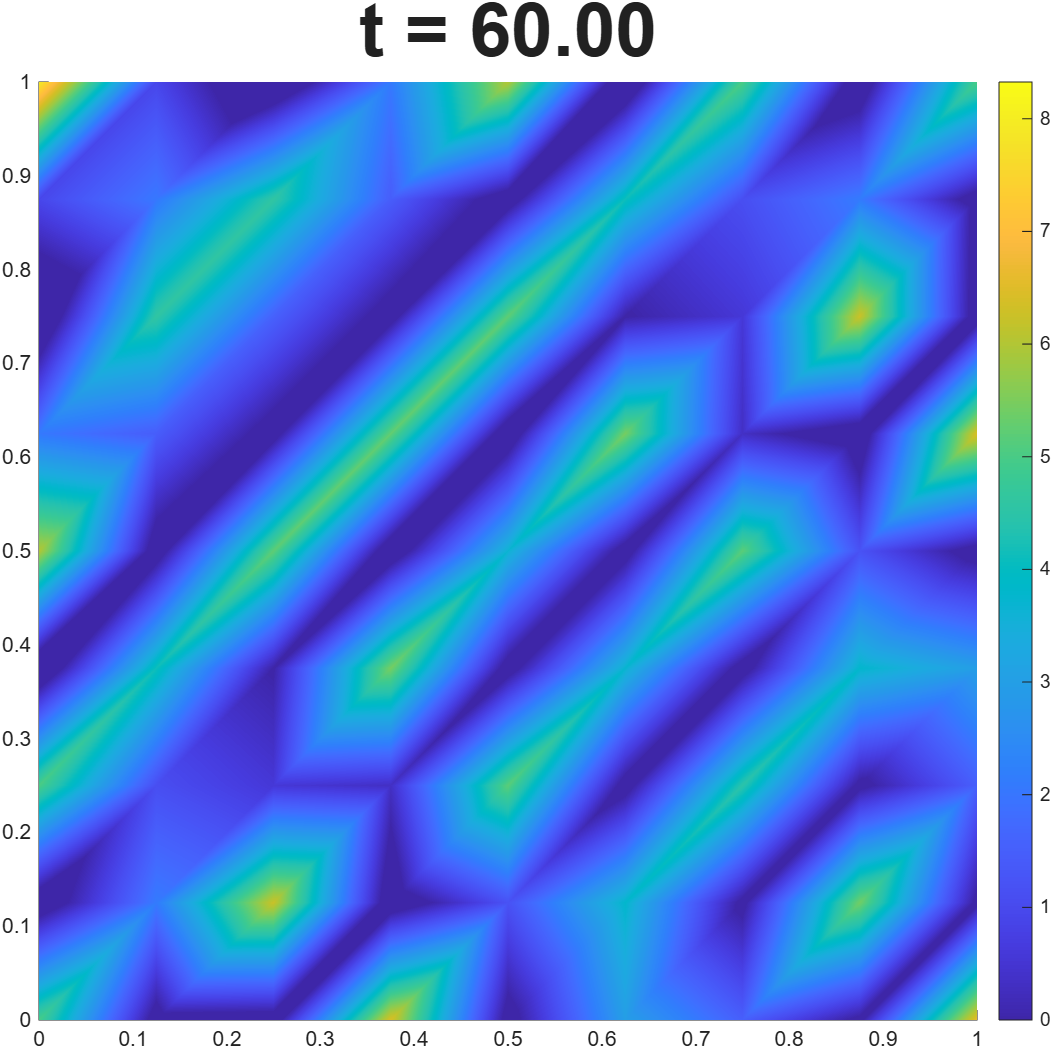}
  \includegraphics[width=.2\textwidth]{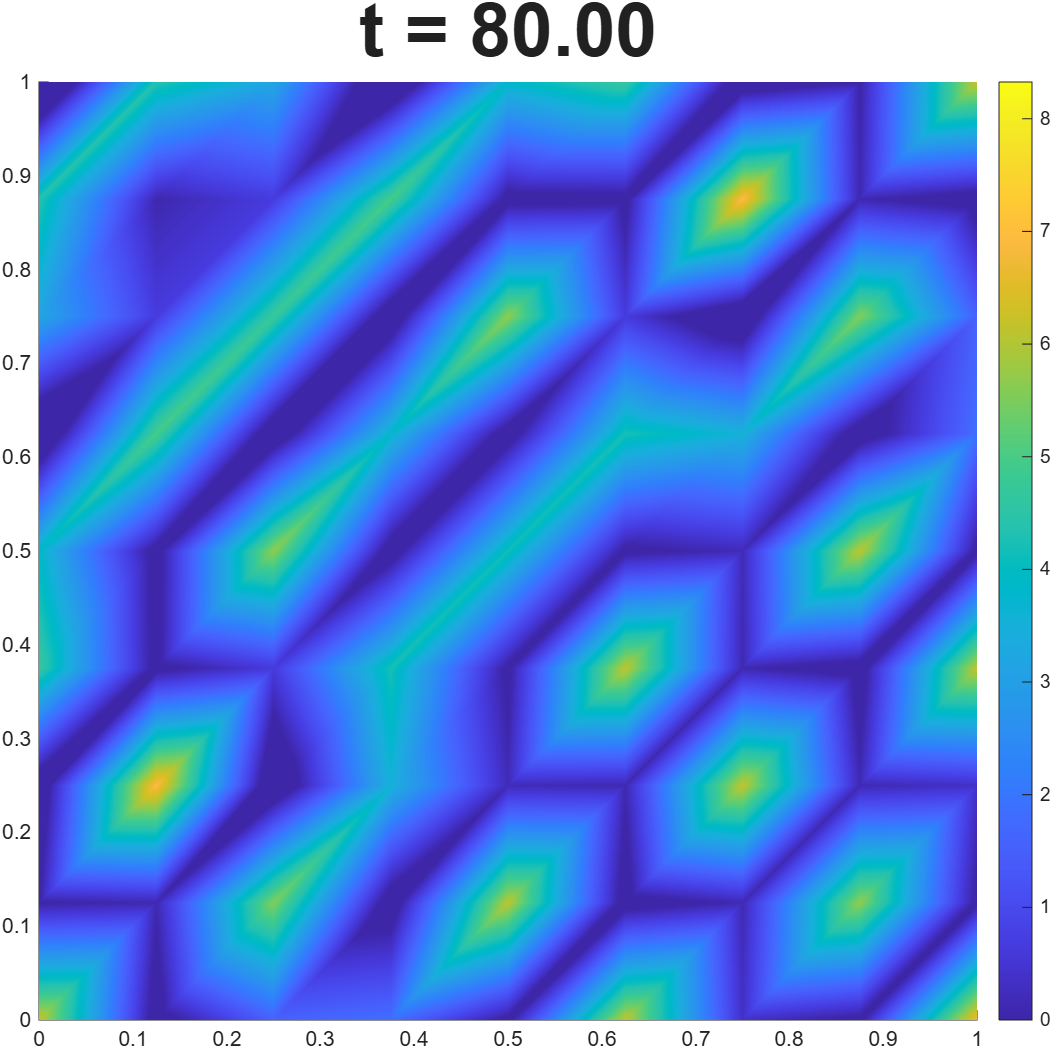}
  \includegraphics[width=.2\textwidth]{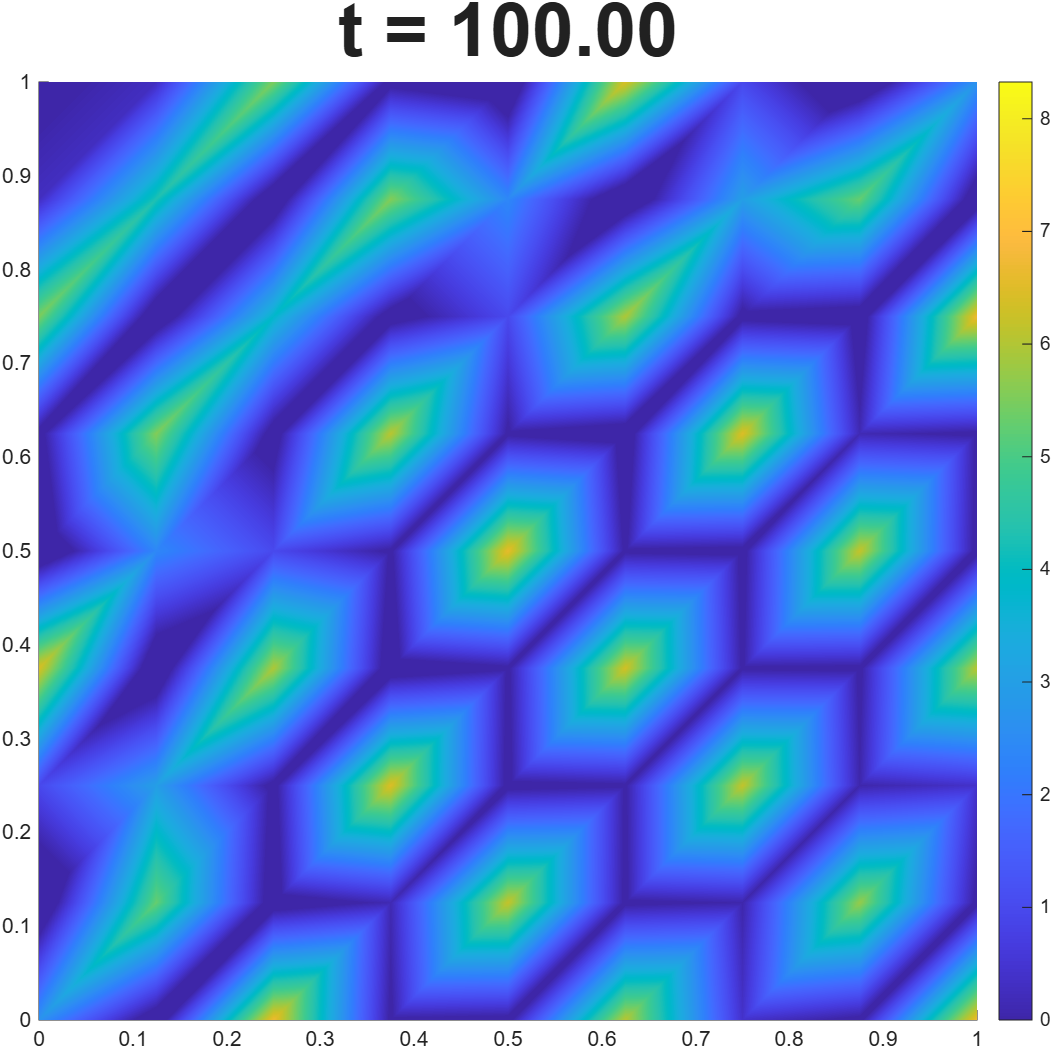}

  \includegraphics[width=.2\textwidth]{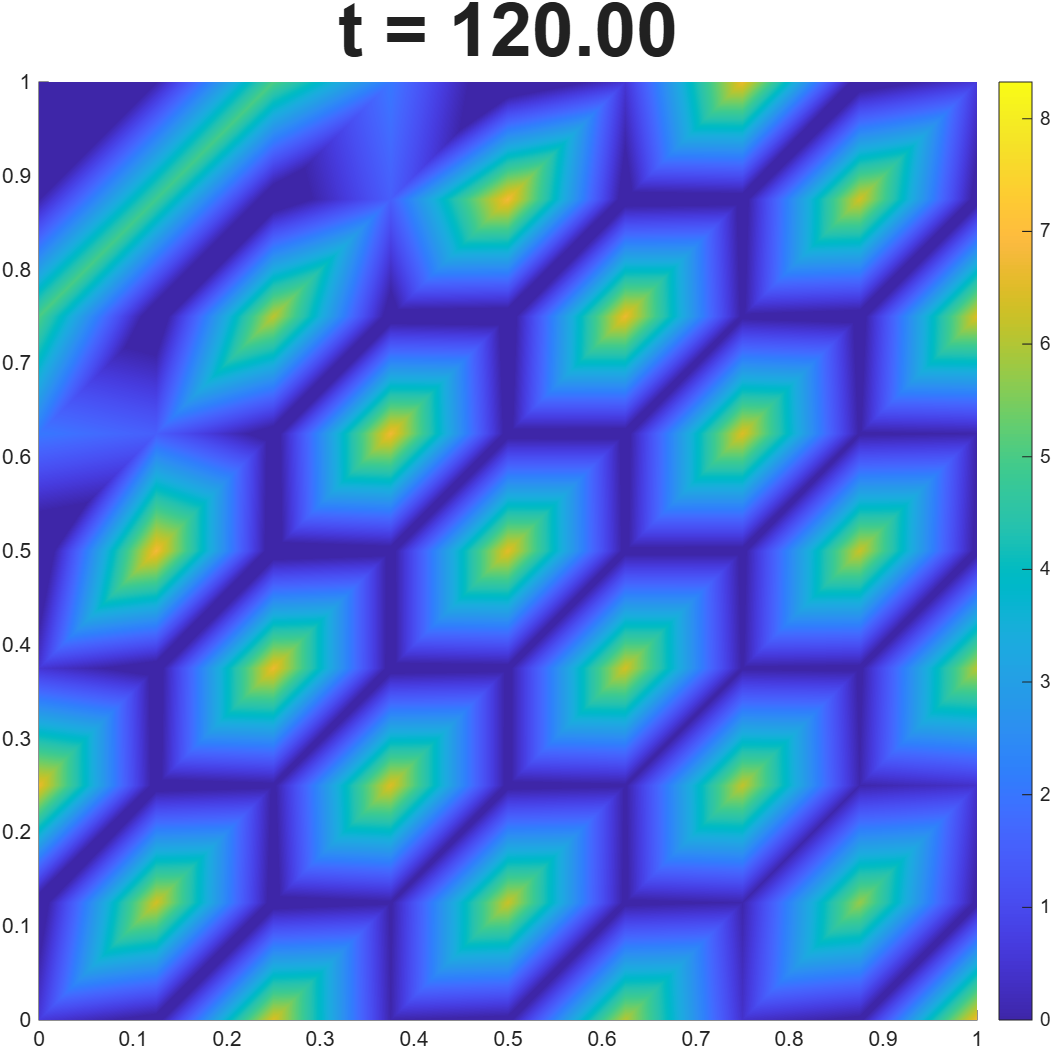}
  \includegraphics[width=.2\textwidth]{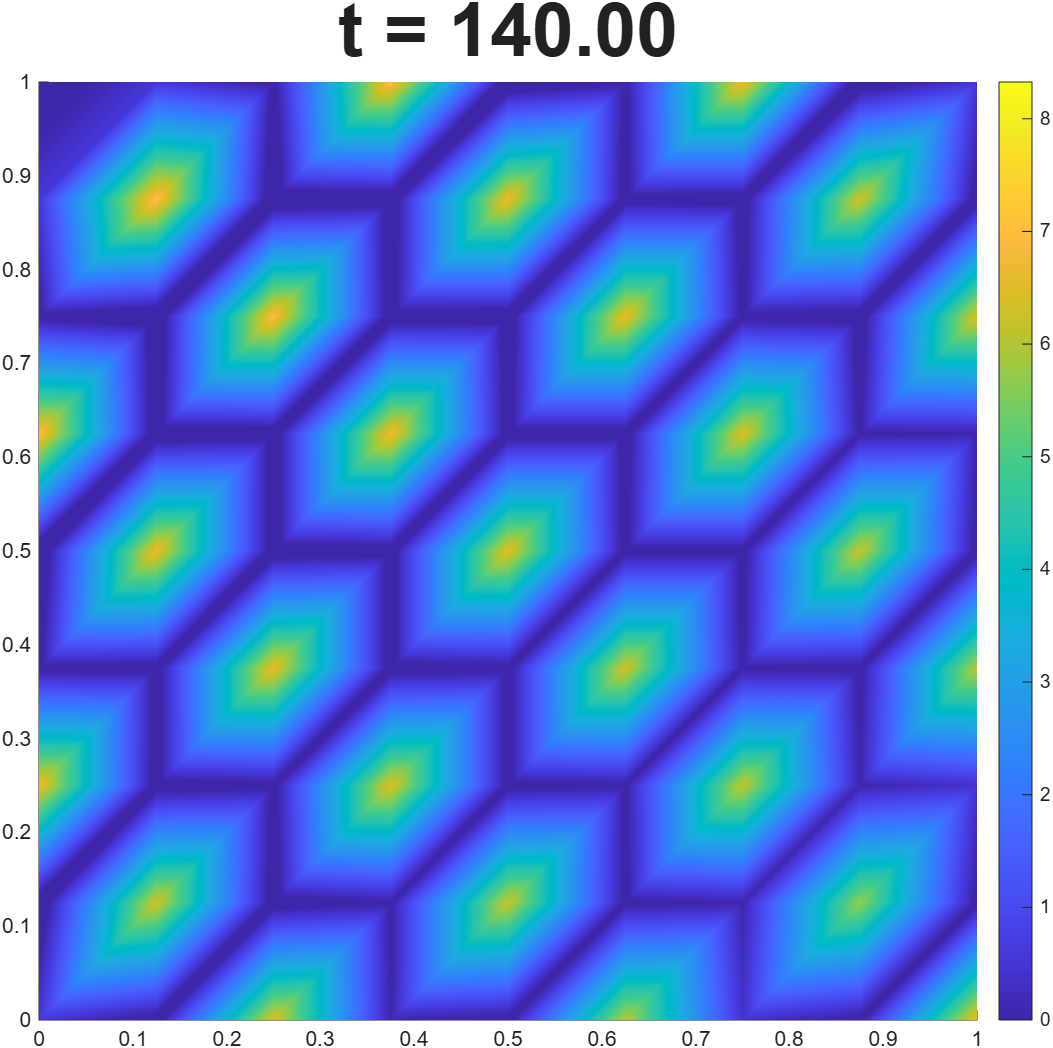}
  \includegraphics[width=.2\textwidth]{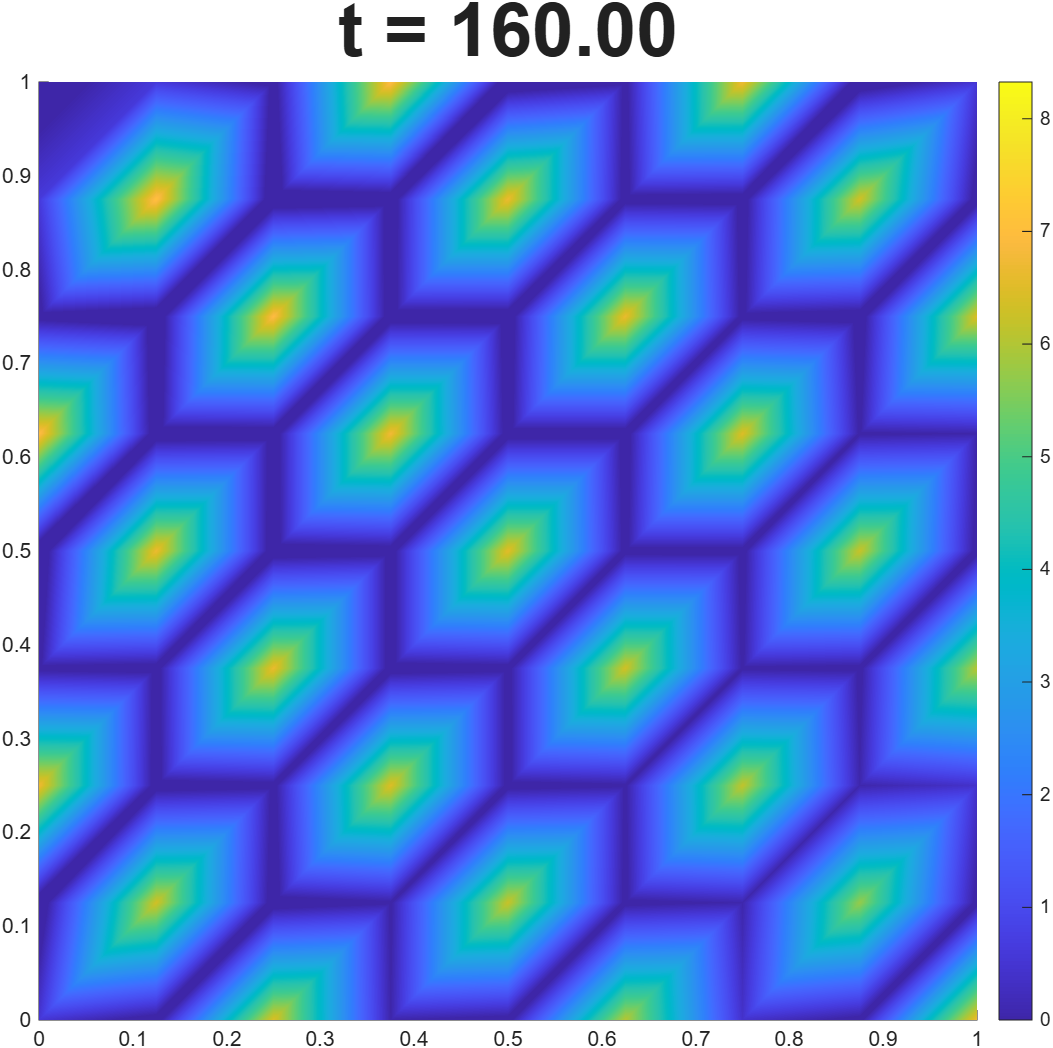} 
  \caption{Example 2. Evolution of the uncontrolled state $y_{d,2}$.}\label{F7}
\end{figure}

\begin{figure}[h!]
  \centering
  \includegraphics[width=.2\textwidth]{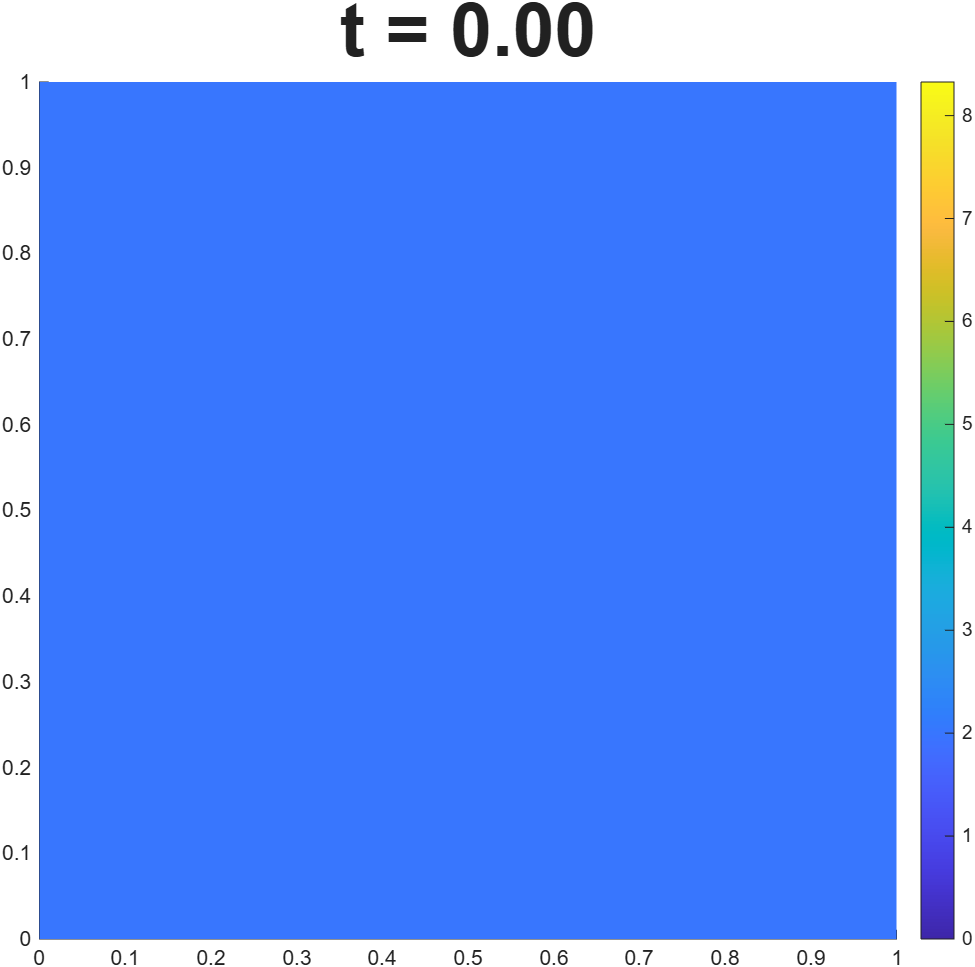}
  \includegraphics[width=.2\textwidth]{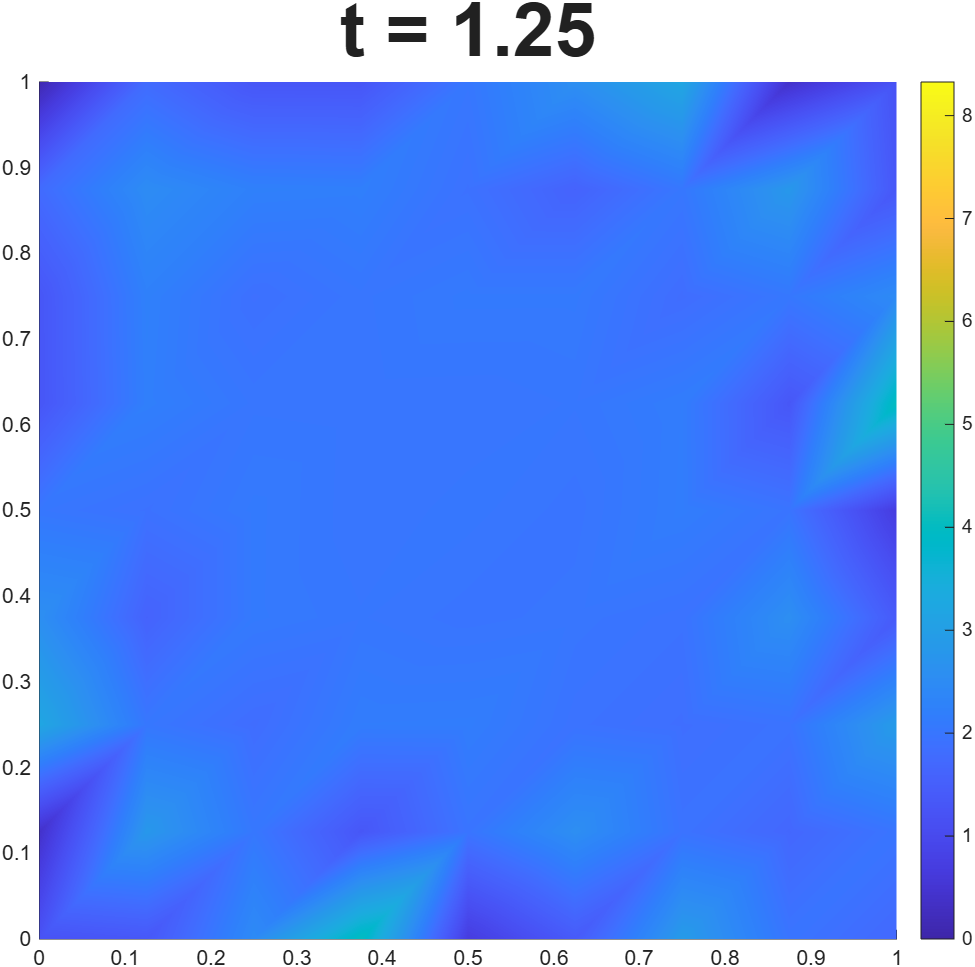}
  \includegraphics[width=.2\textwidth]{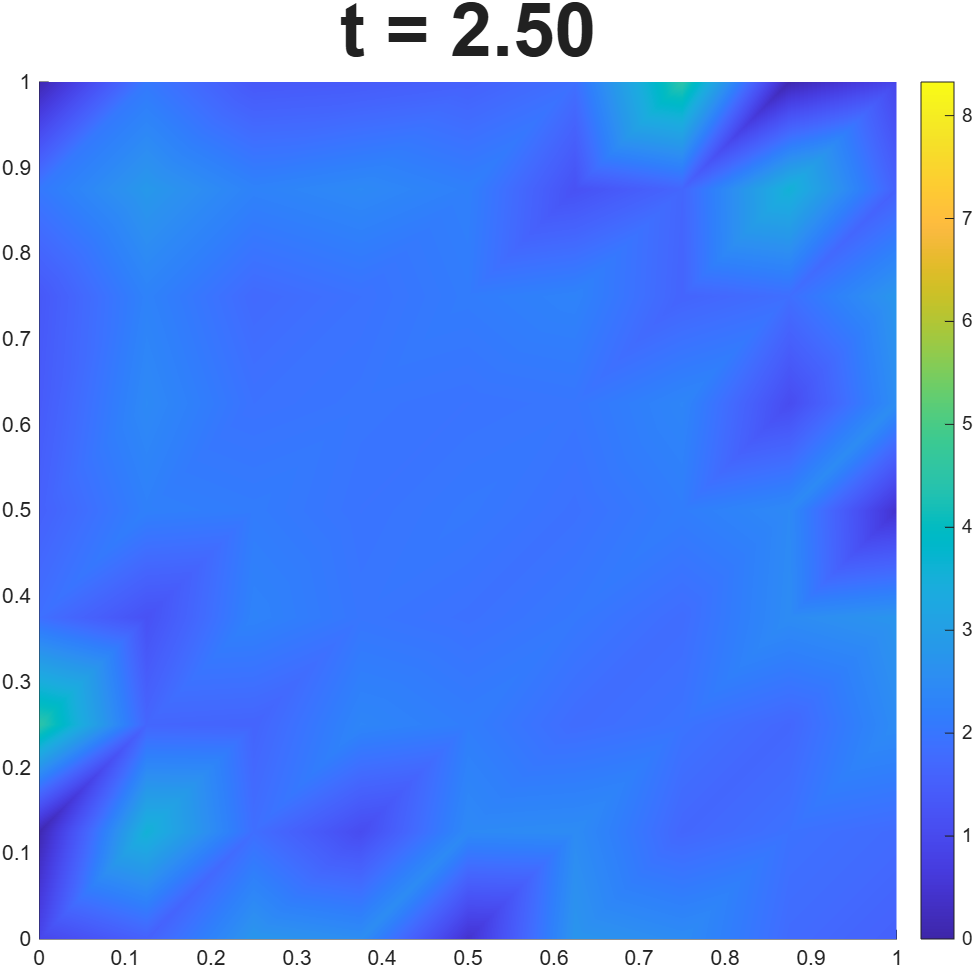}

  \includegraphics[width=.2\textwidth]{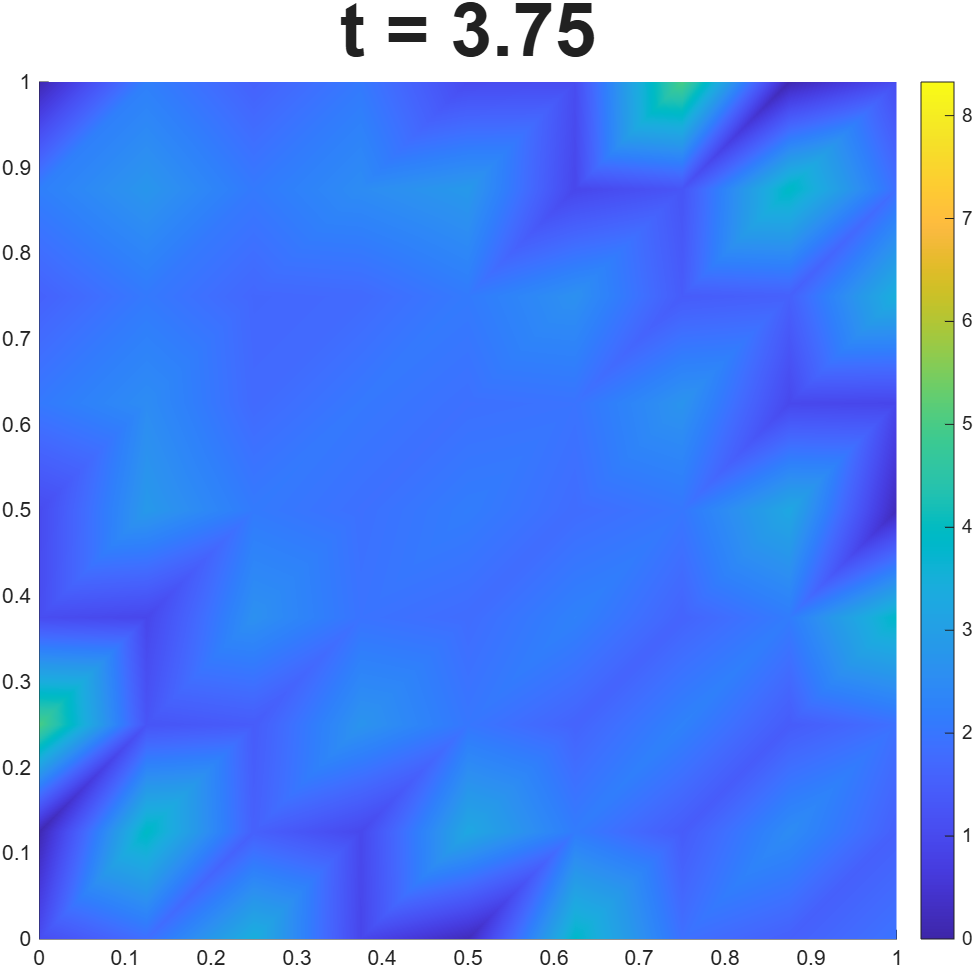}
  \includegraphics[width=.2\textwidth]{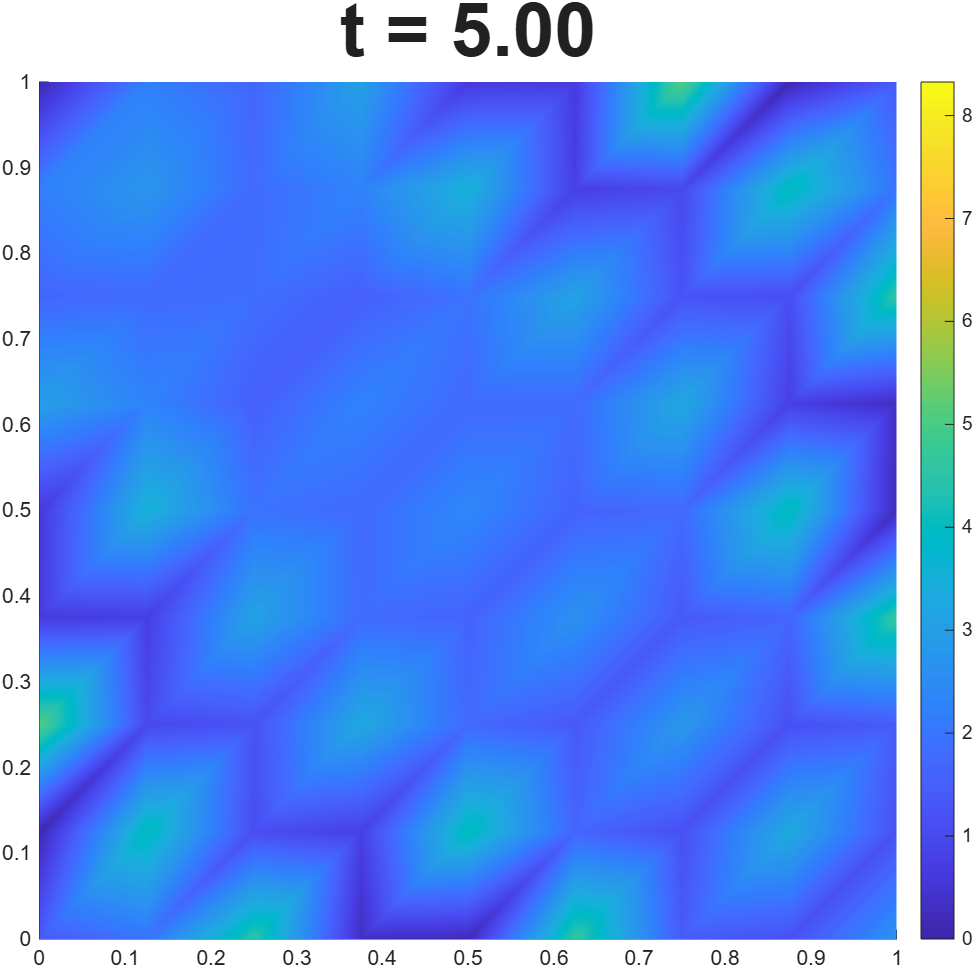}
  \includegraphics[width=.2\textwidth]{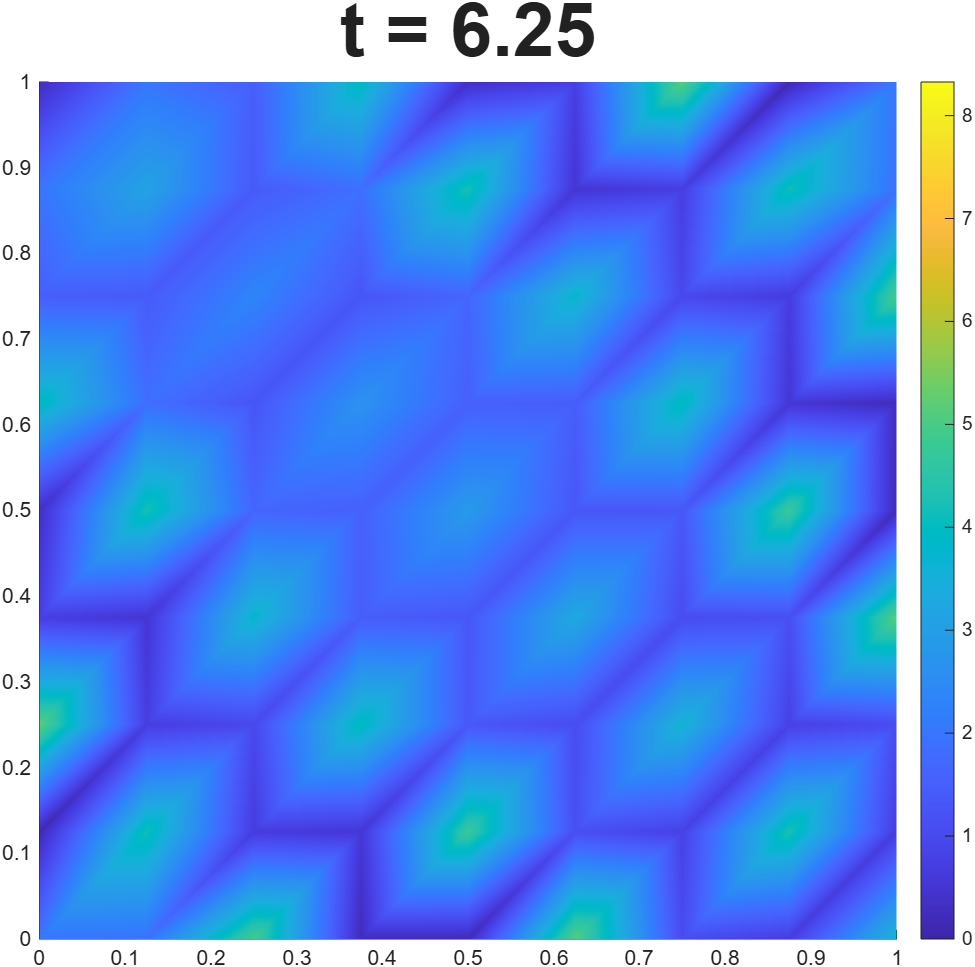}

  \includegraphics[width=.2\textwidth]{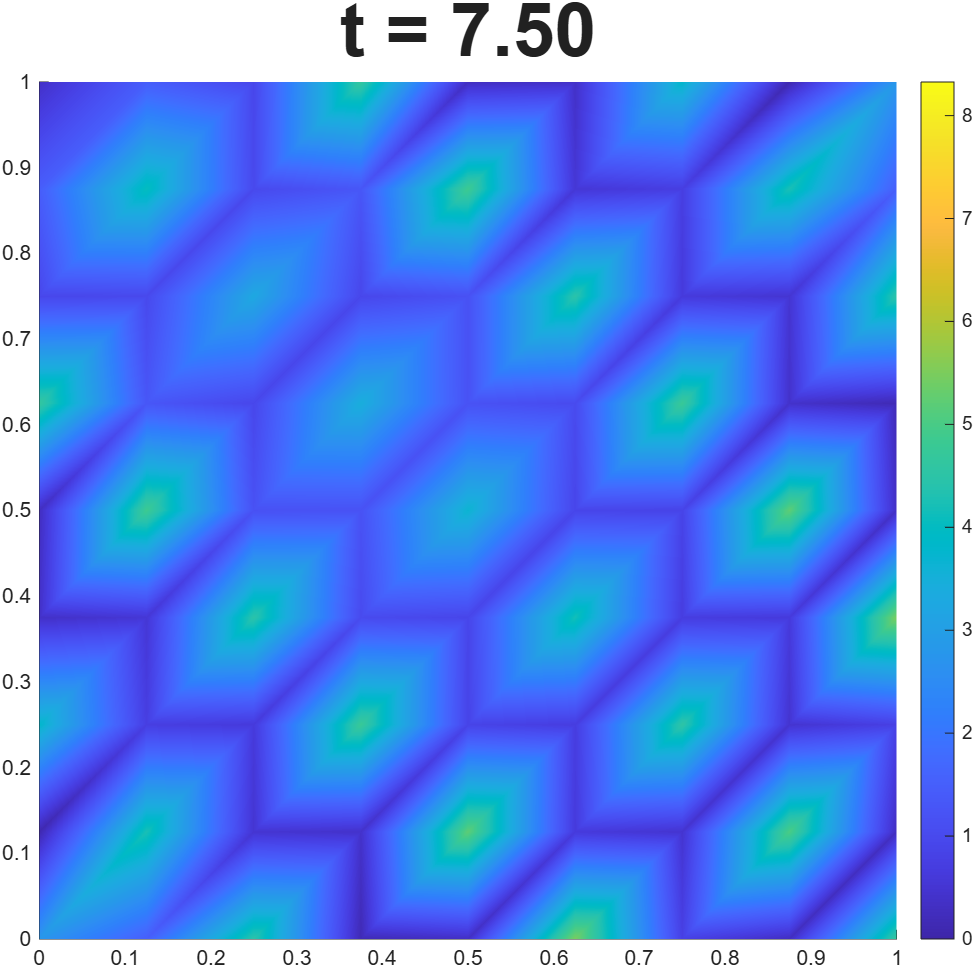}
  \includegraphics[width=.2\textwidth]{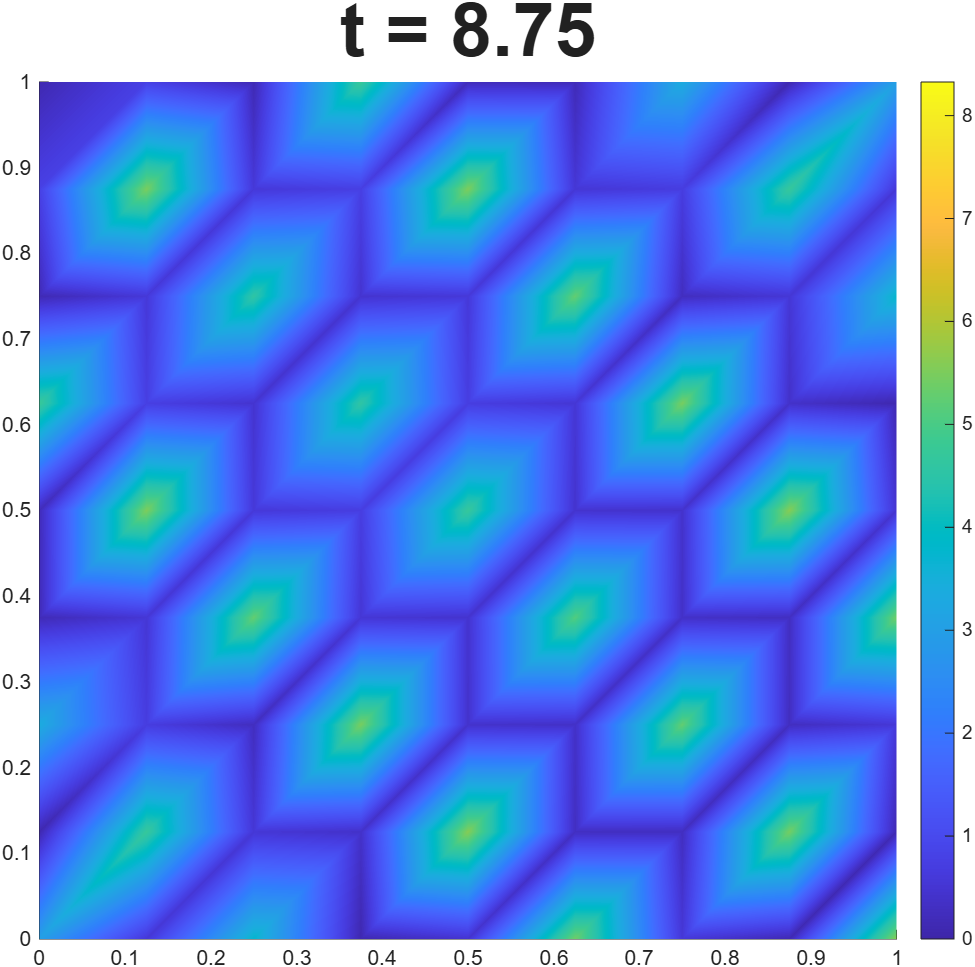}
  \includegraphics[width=.2\textwidth]{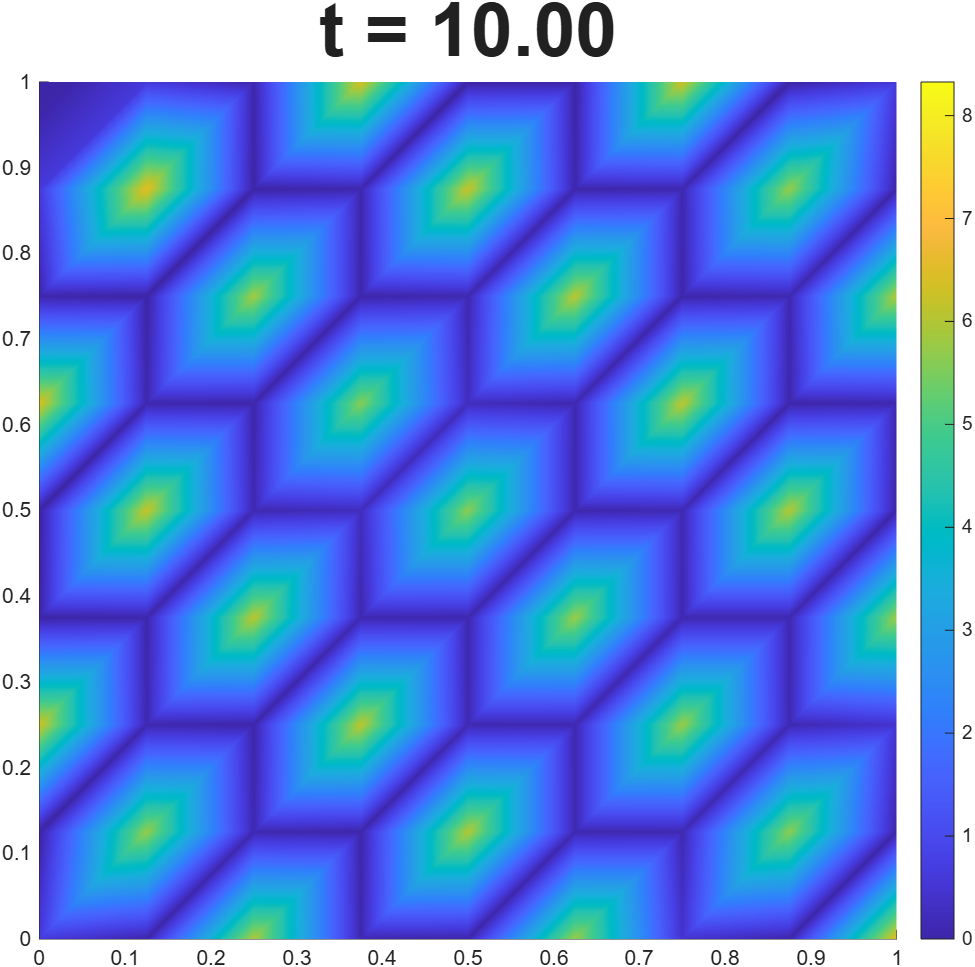}
  \caption{Example 2. Evolution of the controlled state $\bar y_2$.}\label{F8}
\end{figure}

\textit{Some computational details.} To obtain a solution to the state-constrained problem, we have used the alternating direction method of multipliers (ADMM) presented in \cite{Glowinskietal2020}. At each iteration, we solve the control-constrained problem using a trust-region method based on SQP applied to the reduced functional. This means that the nonlinear state equation must be solved at every iteration; see \cite{CasasMateos2026a}. We use the quasi-Newton L-BFGS method during the first iterations and switch to Newton's method once a sufficiently accurate approximation of the solution has been obtained. The control-constrained quadratic program arising at each SQP step is solved with a semismooth Newton method (although the control constraints are not active in our examples). The unconstrained quadratic program that arises at each step of the semismooth Newton method is solved using the conjugate gradient method. The initial iterate for the ADMM is the solution $\tilde u$ of the problem without state constraints.

We solve the partial differential equations using the finite element method in space with a uniform spatial triangular mesh of size $h$, while the Crank-Nicolson method is used in time with a uniform time partition of size $\tau$. All four variables---state, adjoint state, control, and Lagrange multiplier---are discretized in space with continuous piecewise linear Lagrange finite elements. To handle the nonlinear terms, a seven-point Gaussian formula of fifth degree has been used for numerical integration, so that all spatial integrals are evaluated exactly. The Tikhonov regularization term is discretized using the lumped mass matrix method in space and the midpoint rule in time. The state is discretized in time using continuous piecewise linear elements. In contrast to \cite{BeckerMeidnerVexler2007}, the time discretization of the adjoint state is performed also using continuous piecewise linear elements. A suitable use of the midpoint rule, cf. \cite[eq. (1.54) and (13.20)]{Thomee2006}, for the time integrals leads to a more natural backward-in-time scheme for solving the discrete adjoint equation. This avoids the half-steps required in \cite[Remark 3.2]{BeckerMeidnerVexler2007}, while still yielding the correct discrete gradient. This technique also allows both the control and the Lagrange multiplier to be discretized in time using piecewise constant functions. All the software was developed in-house using \textsc{Matlab} R2026a.


\end{document}